\DeclareMathOperator*\uplim{\overline{lim}}
\begin{document}

\newtheorem{theorem}{Theorem}
\newtheorem{lemma}[theorem]{Lemma}
\newtheorem{claim}[theorem]{Claim}
\newtheorem{cor}[theorem]{Corollary}
\newtheorem{conj}[theorem]{Conjecture}
\newtheorem{prop}[theorem]{Proposition}
\newtheorem{definition}[theorem]{Definition}
\newtheorem{question}[theorem]{Question}
\newtheorem{example}[theorem]{Example}
\newcommand{\hh}{{{\mathrm h}}}
\newtheorem{remark}[theorem]{Remark}

\numberwithin{equation}{section}
\numberwithin{theorem}{section}
\numberwithin{table}{section}
\numberwithin{figure}{section}

\def\sssum{\mathop{\sum\!\sum\!\sum}}
\def\ssum{\mathop{\sum\ldots \sum}}
\def\iint{\mathop{\int\ldots \int}}

\newcommand{\diam}{\operatorname{diam}}
\newcommand{\dimloc}{\operatorname{\dim_{loc}}}

\def\squareforqed{\hbox{\rlap{$\sqcap$}$\sqcup$}}
\def\qed{\ifmmode\squareforqed\else{\unskip\nobreak\hfil
\penalty50\hskip1em \nobreak\hfil\squareforqed
\parfillskip=0pt\finalhyphendemerits=0\endgraf}\fi}

\newfont{\teneufm}{eufm10}
\newfont{\seveneufm}{eufm7}
\newfont{\fiveeufm}{eufm5}
%
%
\newfam\eufmfam
     \textfont\eufmfam=\teneufm
\scriptfont\eufmfam=\seveneufm
     \scriptscriptfont\eufmfam=\fiveeufm
%
%
\def\frak#1{{\fam\eufmfam\relax#1}}

\newcommand{\bflambda}{{\boldsymbol{\lambda}}}
\newcommand{\bfmu}{{\boldsymbol{\mu}}}
\newcommand{\bfxi}{{\boldsymbol{\eta}}}
\newcommand{\bfrho}{{\boldsymbol{\rho}}}

\def\eps{\varepsilon}

\def\fK{\mathfrak K}
\def\fT{\mathfrak{T}}
\def\fL{\mathfrak L}
\def\fR{\mathfrak R}
\def\fI{\mathfrak I}

\def\sI{\mathsf I} 

\def\fA{{\mathfrak A}}
\def\fB{{\mathfrak B}}
\def\fC{{\mathfrak C}}
\def\fM{{\mathfrak M}}
\def\fQ{{\mathfrak Q}}
\def\fS{{\mathfrak  S}}
\def\fU{{\mathfrak U}}

 \def\sfE {\mathsf {E}}
 \def\sfM {\mathsf {M}}
\def\T {\mathsf {T}}
\def\TTT {\mathsf {T}}
\def\Tor{\mathsf{T}_d}
\def\Tore{\widetilde{\mathrm{T}}_{d} }

\def\sM {\mathsf {M}}
\def\sS{\mathsf {S}}

\def\ss{\mathsf {s}}

\def\Kmnd{\cK_d(m,n)}
\def\Kmnp{\cK_p(m,n)}
\def\Kmnq{\cK_q(m,n)}

\def \balpha{\bm{\alpha}}
\def \bbeta{\bm{\beta}}
\def \bgamma{\bm{\gamma}}
\def \bdelta{\bm{\delta}}
\def \bzeta{\bm{\zeta}}
\def \blambda{\bm{\lambda}}
\def \bchi{\bm{\chi}}
\def \bphi{\bm{\varphi}}
\def \bpsi{\bm{\psi}}
\def \bnu{\bm{\nu}}
\def \bomega{\bm{\omega}}

\def \bell{\bm{\ell}}

\def\eqref#1{(\ref{#1})}

\def\vec#1{\mathbf{#1}}

\newcommand{\abs}[1]{\left| #1 \right|}

\def\Zq{\mathbb{Z}_q}
\def\Zqx{\mathbb{Z}_q^*}
\def\Zd{\mathbb{Z}_d}
\def\Zdx{\mathbb{Z}_d^*}
\def\Zf{\mathbb{Z}_f}
\def\Zfx{\mathbb{Z}_f^*}
\def\Zp{\mathbb{Z}_p}
\def\Zpx{\mathbb{Z}_p^*}
\def\cM{\mathcal M}
\def\cE{\mathcal E}
\def\cH{\mathcal H}

\def\le{\leqslant}

\def\ge{\geqslant}

\def\sfB{\mathsf {B}}
\def\sfC{\mathsf {C}}
\def\sfE{\mathsf {E}}
\def\L{\mathsf {L}}
\def\FF{\mathsf {F}}

\def\sE {\mathscr{E}}
\def\sS {\mathscr{S}}
\def\sF {\mathscr{F}}
\def\sB {\mathscr{B}}
\def\cA{{\mathcal A}}
\def\cB{{\mathcal B}}
\def\cC{{\mathcal C}}
\def\cD{{\mathcal D}}
\def\cE{{\mathcal E}}
\def\cF{{\mathcal F}}
\def\cG{{\mathcal G}}
\def\cH{{\mathcal H}}
\def\cI{{\mathcal I}}
\def\cJ{{\mathcal J}}
\def\cK{{\mathcal K}}
\def\cL{{\mathcal L}}
\def\cM{{\mathcal M}}
\def\cN{{\mathcal N}}
\def\cO{{\mathcal O}}
\def\cP{{\mathcal P}}
\def\cQ{{\mathcal Q}}
\def\cR{{\mathcal R}}
\def\cS{{\mathcal S}}
\def\cT{{\mathcal T}}
\def\cU{{\mathcal U}}
\def\cV{{\mathcal V}}
\def\cW{{\mathcal W}}
\def\cX{{\mathcal X}}
\def\cY{{\mathcal Y}}
\def\cZ{{\mathcal Z}}
\newcommand{\rmod}[1]{\: \mbox{mod} \: #1}

\def\cg{{\mathcal g}}

\def\vy{\mathbf y}
\def\vr{\mathbf r}
\def\vx{\mathbf x}
\def\va{\mathbf a}
\def\vb{\mathbf b}
\def\vc{\mathbf c}
\def\ve{\mathbf e}
\def\vh{\mathbf h}
\def\vk{\mathbf k}
\def\vm{\mathbf m}
\def\vz{\mathbf z}
\def\vu{\mathbf u}
\def\vv{\mathbf v}

\def \btau{\bm{\tau}}

\def\e{{\mathbf{\,e}}}
\def\ep{{\mathbf{\,e}}_p}
\def\eq{{\mathbf{\,e}}_q}

\def\Tr{{\mathrm{Tr}}}
\def\Nm{{\mathrm{Nm}}}

 \def\SS{{\mathbf{S}}}

\def\lcm{{\mathrm{lcm}}}

 \def\0{{\mathbf{0}}}

\def\({\left(}
\def\){\right)}
\def\l|{\left|}
\def\r|{\right|}
\def\fl#1{\left\lfloor#1\right\rfloor}
\def\rf#1{\left\lceil#1\right\rceil}
\def\sumstar#1{\mathop{\sum\vphantom|^{\!\!*}\,}_{#1}}

\def\mand{\qquad \mbox{and} \qquad}

\def\tred#1{\begin{color}{red}{{#1}}\end{color}}




\hyphenation{re-pub-lished}

\mathsurround=1pt

\def\bfdefault{b}

\def \F{{\mathbb F}}
\def \K{{\mathbb K}}
\def \N{{\mathbb N}}
\def \Z{{\mathbb Z}}
\def \P{{\mathbb P}}
\def \Q{{\mathbb Q}}
\def \R{{\mathbb R}}
\def \C{{\mathbb C}}
\def\Fp{\F_p}
\def \fp{\Fp^*}

 \DeclarePairedDelimiter{\ceil}{\lceil}{\rceil}

 \def \xbar{\overline x}

\title[Metric theory of Weyl sums]{Metric theory of Weyl sums}

 \author[C. Chen] {Changhao Chen}

\address{Department of Mathematics, The Chinese University of Hong Kong, Shatin, Hong Kong}
\email{changhao.chenm@gmail.com}

 \author[B. Kerr] {Bryce Kerr}
\address{Department of Mathematics and Statistics,
University of Turku,  FI-20014, Finland}
\email{bryce.kerr@utu.fi}

\author[J. Maynard]{James Maynard}
\address{Mathematical Institute
University of Oxford
Andrew Wiles Building, Oxford
OX2 6GG, UK}
\email{james.alexander.maynard@gmail.com}

 \author[I. E. Shparlinski] {Igor E. Shparlinski}

\address{Department of Pure Mathematics, University of New South Wales,
Sydney, NSW 2052, Australia}
\email{igor.shparlinski@unsw.edu.au}

\begin{abstract}  
We prove that there exist positive  constants $C$ and $c$ such that for any integer $d \ge 2$  the set  of $\vx\in [0,1)^d$ satisfying
$$
cN^{1/2}\le \left|\sum_{n=1}^N \exp\(2 \pi i\(x_1n+\ldots+x_d n^d\)\) \right|\le C N^{1/2}
$$
 for infinitely many natural numbers $N$ is of full Lebesque measure. 
This substantially improves the previous results where similar sets have been measured in terms of the Hausdorff dimension. 
We also obtain 
similar bounds for  exponential sums  with monomials $xn^d$ when $d\neq 4$. 
Finally, we obtain lower bounds for the Hausdorff dimension of large values of general exponential  
polynomials. 
\end{abstract}


\keywords{Weyl sums, Hausdorff dimension}
\subjclass[2010]{11L15, 28A78}

\maketitle

\tableofcontents

\section{Introduction}

\subsection{Background and motivation}

For an integer $d \geqslant 1$, let  
$$
\Tor =  (\R/\Z)^d
$$
be  the  $d$-dimensional unit torus. 
When $d=1$ we write
$$
\T = \T_1 = \R/\Z.
$$
For  a vector $\vx = (x_1, \ldots, x_d)\in \Tor$ and integer $N,$ we consider the exponential   
sums
$$
S_d(\vx; N)=\sum_{n=1}^{N}\e\(x_1 n+\ldots +x_d n^{d} \), 
$$
which  are commonly called {\it Weyl sums\/}, where   throughout  the paper we denote $\e(x)=\exp(2 \pi i x)$.  These sums were originally introduced by Weyl to study equidistribution of fractional parts of polynomials and rose to prominence through applications to the circle method and Riemann zeta function. Despite more than a century since these sums were introduced, their behaviour for individual values of $\vx$ is not well understood, see~\cite{Brud,BD}.
 
Much more is known about the average behaviour of $S_d(\vx;N)$. The recent advances of  Bourgain, Demeter and Guth~\cite{BDG} (for $d \geqslant 4$) 
and Wooley~\cite{Wool2} (for $d=3$)  (see also~\cite{Wool5}) for  the Vinogradov mean value theorem imply the estimate
\begin{equation}
\label{eq:MVT}
 N^{s(d)} \le \int_{\Tor} |S_d(\vx; N)|^{2s(d)}d\vx \leqslant  N^{s(d)+o(1)}, 
\end{equation} 
where   
$$
s(d)=\frac{d(d+1)}{2}
$$
and is best possible up to $o(1)$ in the exponent of $N$.

We observe that the optimal bound~\eqref{eq:MVT} does not tell much about the 
typical size of sums $S_d(\vx; N)$. It is conceivable, however unlikely, that the average value is influenced by  a very small set of $\vx \in \Tor$,
while for other 
$\vx \in \Tor$  these sums are very small. 
The main goal of this paper is to rule out this possibility and show that for
almost all  $\vx \in \Tor$ the sums $S_d(\vx; N)$ have order corresponding to the average size $N^{1/2}$
for infinitely many $N$.

\subsection{Previous results and questions}

The first results concerning the metric behaviour of Weyl sums are due to Hardy and Littlewood~\cite{HL1} who have estimated  the  {\it Gauss sums\/}  
\begin{equation}
\label{eq:GaussSum}
G(x; N) = \sum_{n=1}^N \e\(x n^2\),
\end{equation}
in terms of the continued fraction expansion of $x$. This idea has  been expanded upon by Fiedler, Jurkat and K\"orner~\cite[Theorem~2]{FJK} who give  the following optimal lower and upper bounds.
 Suppose that  $\{f(n)\}_{n=1}^{\infty}$ is a non-decreasing sequence of positive numbers. Then one has 
\begin{equation}
\begin{split}
\label{eq:GS}
\uplim_{N\to  \infty} \frac{ \left |G(x; N)\right |}{\sqrt{N} f(N)}<\infty & \text{ for almost all  }x\in \T\\
&  \qquad  \Longleftrightarrow \quad \sum_{n=1}^{\infty} \frac{1}{n f(n)^{4}} <\infty.
\end{split}
\end{equation}
See also~\cite[Theorem~0.1]{FK} for similar results with the more general sums $S_2(\vx;N)$,  (which correspond to $G(x;N)$ with a linear term in the phase).

For $d\ge 3$,  it has been  shown that for almost all $\vx\in \T_d$ 
$$
|S_{d}(\vx; N)|\le N^{1/2+o(1)}, \qquad N\rightarrow \infty.
$$
It has also been conjectured that the exponent $1/2$ is best possible, see~\cite[Conjecture~1.1]{ChSh-AM}. 
In this paper, among other things we confirm this conjecture, see Theorem~\ref{thm:gen sum} below.

For $\alpha\in (0, 1)$ and integer $d\ge 2$, consider the set
$$
\sE_{d,\alpha}=\{\vx\in \Tor:~|S_{d}( \vx; N)|\ge N^{\alpha} \text{ for infinity many }N\in \N\}.
$$
We remark that in the series of works~\cite{ChSh-AM, ChSh-IMRN, ChSh-JNT} for any $\alpha\in (0, 1)$ some upper and lower bounds have been given on the Hausdorff dimension $\dim \sE_{d, \alpha}$ of 
$\sE_{ d,\alpha}$ (see Definition~\ref{def:HD} below).

In Section~\ref{sec:heuristic}  we present some heuristic arguments about the exact behaviour of  $\dim \sE_{d, \alpha}$ for $\alpha\in (1/2, 1)$.

Furthermore, as in~\cite{ChSh-JNT}, we  also investigate   Weyl sums with  monomials
\begin{equation}
\label{eq:sigmadef}
\sigma_d(x; N)=\sum_{n=1}^N \e\(xn^d\).
\end{equation}
For each $\alpha\in (0, 1)$ let 
\begin{equation}
\label{eq:sigmaFdef}
\sF_{d, \alpha}=\{x\in \TTT:~|\sigma_d(x; N)|\ge N^{\alpha} \text{ for infinitely many } N\in \N\}.
\end{equation}   
Similarly to $\sE_{d, \alpha}$, for $\alpha\in (0,1)$ and integer $d\ge 2$ the set $\sF_{d, \alpha}$ has positive Hausdorff dimension. 
Moreover for $\alpha\in (1/2, 1)$ and $d\ge 2$ the set $\sF_{d, \alpha}$ has zero Lebesgue measure~\cite[Corollary~2.2]{ChSh-IMRN}.

Our method also shows that for $\alpha =1/2$ a slight modification of the sets
$\sF_{d, 1/2}$ (with $d=3$ or $d \ge 5$) and $ \cE_{d, 1/2}$ (with any $d \ge 3$), see~\eqref{eq:Ecd-mon} and~\eqref{eq:Ecd-gen} below,
are of full Lebesgue 
 measure. This implies that  
\begin{equation}
\label{eq:Hd=1}
\dim \sF_{d, \alpha}=1 \mand \dim \cE_{d, \alpha} = d,\qquad  \forall \, \alpha\in (0,1/2).
\end{equation}  
We remark that~\eqref{eq:Hd=1}  also applies to $d=4$. In Theorem~\ref{thm:mon sum 4} below 
 we only establish the positivity of the Lebesgue measure for $d=4$. This nevertheless is still enough to 
conclude that $\dim \sF_{4, \alpha}=1$ for all $\alpha\in (0,1/2)$.

\subsection{Notation and conventions}
Throughout the paper, the notation $U = O(V)$, 
$U \ll V$ and $ V\gg U$  are equivalent to $|U|\leqslant c V$ for some positive constant $c$, 
which depends on the degree $d$ and occasionally on the small real positive 
parameter $\varepsilon$.  We never {\it explicitly\/} mention these dependences, 
but we do this for other parameters such as the function $f$, the interval $\fI$ and the cube
$\fQ$. 

 We also define $U \asymp V$ as an equivalent $U \ll V \ll U$. 

For any quantity $V> 1$ we write $U = V^{o(1)}$ (as $V \to \infty$) to indicate a function of $V$ which 
satisfies $ V^{-\eps} \le |U| \le V^\eps$ for any $\eps> 0$, provided $V$ is large enough. One additional advantage 
of using $V^{o(1)}$ is that it absorbs $\log V$ and other similar quantities without changing  the whole 
expression.  

For a a finite set $\cS$, we use $\# \cS$ to denote its cardinality.

 We always identify $\Tor$ with half-open unit cube $[0, 1)^d$.  

We say that some property holds for almost all $\vx \in \T_k$ if it holds for a set 
 $\cX \subseteq [0,1)^k$ of $k$-dimensional  Lebesgue measure  $\lambda(\cX) = 1$. 
 
When there is no confusion of positivity of $n$, we also  use $\sum_{n\le N} a_n$ to represent the sum $\sum_{n=1}^N a_n$.

\section{Main results}
\label{sec:main}

\subsection{Results on the Lebesgue measure} 
\label{sec:Lebesgue}
It is convenient to introduce a weighted variant  of the sums $\sigma_d(x; N)$ and $S_d(\vx; N)$.
In particular, for a sequence of complex weights $\va=(a_n)_{n=1}^\infty$  with 
$|a_n|=1$ we define 
\begin{align*}
& \sigma_{\va, d}(x; N)=\sum_{n=1}^N a_n \e\(xn^d\), \\ 
& S_{\va, d}(\vx; N)=\sum_{n=1}^Na_n \e\(x_1n+\ldots + x_dn^d\).
\end{align*}  
 
Here we are mostly interested in the case $\alpha=1/2$. Hence
we  modify the notations  for  $\sE_{d,  1/2}$ and $\sF_{d, 1/2}$ in a way that they also apply to $S_{\va, d}(\vx; N)$ and $\sigma_{\va, d}(x; N)$. 

For integer $d\ge 2$ and  constants $c, C>0$ denote 
\begin{equation}
\label{eq:Ecd-gen}
\begin{split}
\cE_{\va, c, C}(d)=\{\vx\in \Tor&:~ cN^{1/2} \le |S_{\va, d}(\vx; N)|\le CN^{1/2} \\
& \qquad \quad  \text{ for infinitely many }   N\in \N\},
\end{split}
\end{equation}
and
\begin{equation}
\label{eq:Ecd-mon}
\begin{split}
\cF_{\va, c, C}(d)=\{x\in \TTT &:~ cN^{1/2}\leq  |\sigma_{\va, d}(x; N)|\le C N^{1/2}\\
& \qquad \quad \text{ for infinitely many } N\in \N\}.
\end{split}
\end{equation}

For more general sets $\cA\subseteq \Tor$ we use $\lambda(\cA)$ to denote the Lebesgue measure of $\cA$.

We start with the case of monomial sums.

\begin{theorem}
\label{thm:mon sum}
There exist positive constants  $c$,  $C$ such that for  $d=3$ or $d\ge 5$ and 
 any sequence of complex weights $\va=(a_n)_{n=1}^\infty$  with 
$|a_n|=1$ we have    $\lambda(\cF_{\va, c, C}(d)) =1$. 
\end{theorem}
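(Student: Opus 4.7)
The plan is to combine mean value estimates for the monomial sums $\sigma_{\va, d}(x;N)$ with a Borel--Cantelli-type argument along a sparse subsequence of $N$. First I would establish, for $d = 3$ or $d \ge 5$, a fourth moment bound of the form
\[
\int_\T \bigl|\sigma_{\va, d}(x;N)\bigr|^4\, dx \ll N^2,
\]
which is equivalent to counting solutions of $n_1^d + n_2^d = n_3^d + n_4^d$ with $n_i \in \{1, \ldots, N\}$. For $d = 3$ this is a classical result of Hooley, and for $d \ge 5$ a saving over the trivial $O(N^{2+\eps})$ is available from Hua-type estimates (the case $d = 4$ is excluded precisely because of Euler's parameterised families of equal sums of biquadrates). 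Combined with the exact identity $\int_\T |\sigma_{\va,d}(x;N)|^2\, dx = N$, Paley--Zygmund yields absolute constants $c, \gamma > 0$ with $\lambda(\{x : |\sigma_{\va,d}(x;N)| \ge c\sqrt{N}\}) \ge 2\gamma$, and then a Markov estimate applied to the same $L^4$ bound fixes a universal $C$ large enough that
\[
\lambda(B_N) \ge \gamma, \qquad B_N := \bigl\{x \in \T : c\sqrt{N} \le |\sigma_{\va, d}(x;N)| \le C\sqrt{N}\bigr\}.
\]

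Next I would choose a rapidly growing sequence $N_k$ along which the events $B_{N_k}$ are asymptotically uncorrelated. Expanding
\[
\int_\T |\sigma_{\va,d}(x; N_j)|^2\, |\sigma_{\va,d}(x; N_k)|^2\, dx
\]
via orthogonality reduces it to a count of integer solutions of $n_1^d - n_2^d = m_1^d - m_2^d$ with $n_i \le N_j < N_k$ and $m_i \le N_k$; the diagonal pairings contribute $N_j N_k + O(N_j^2)$, and the off-diagonal part is controlled by the same solution counts used in Step~1, giving a quasi-independence estimate
\[
\lambda(B_{N_j} \cap B_{N_k}) \le (1+o(1))\, \lambda(B_{N_j})\, \lambda(B_{N_k})
\]
whenever $N_k/N_j \to \infty$. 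Together with $\sum_k \lambda(B_{N_k}) = \infty$, the Kochen--Stone lemma then gives $\lambda(\limsup_k B_{N_k}) \ge \kappa$ for some absolute $\kappa > 0$.

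The final step is to upgrade $\kappa > 0$ to $\lambda = 1$ by a $0$--$1$ argument. The event $\cF_{\va, c, C}(d)$ is insensitive to changing any finite prefix of $\va$ (since this alters each $\sigma_{\va,d}(x;N)$ by a bounded function of $x$, negligible against $\sqrt{N}$), so it is morally a tail event with respect to the orthogonal increments $\tau_k(x) = \sigma_{\va,d}(x;N_k) - \sigma_{\va,d}(x;N_{k-1})$; a tail-event $0$--$1$ law for orthogonal systems with uniformly bounded fourth moments would conclude. A more robust alternative is to prove a local version of Step~2, namely $\lambda(B_N \cap I) \ge \gamma \lambda(I)$ uniformly for short intervals $I \subset \T$, after which Lebesgue's density theorem forces $\lambda(\cF_{\va,c,C}(d)) = 1$.

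The main obstacle is precisely this last step. The increments $\tau_k$ are only $L^2$-orthogonal, not probabilistically independent, so Kolmogorov's classical $0$--$1$ law does not apply, and there is no continued-fraction/Gauss-map structure for $d \ge 3$ as there is for $d = 2$. I would expect the bulk of the proof to consist either in establishing a short-interval/local version of the quasi-independence estimate of Step~3, or in constructing an ad hoc $0$--$1$ law exploiting the pseudorandomness of the system $\{\e(xn^d)\}_{n \ge 1}$; careful bookkeeping is also needed to ensure that the final constants $c$ and $C$ can be chosen independently of $\va$.
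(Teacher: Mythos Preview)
Your global Steps~1--2 would yield $\lambda(\limsup_k B_{N_k})>0$, though the quasi-independence inequality as you state it is too strong: from $\int_{\T}|\sigma_{\va,d}(\cdot;N_j)|^2|\sigma_{\va,d}(\cdot;N_k)|^2\le(1+o(1))N_jN_k$ one only extracts $\lambda(B_{N_j}\cap B_{N_k})\le(1+o(1))c^{-4}$, not a bound by $(1+o(1))\lambda(B_{N_j})\lambda(B_{N_k})$. The cruder bound is still enough for Kochen--Stone since $\sum_{k\le K}\lambda(B_{N_k})\ge\gamma K$. But this whole detour is unnecessary: the paper follows exactly the ``robust alternative'' you sketch at the end, and that route makes both Kochen--Stone and any $0$--$1$ law superfluous.

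Concretely, the paper proves the second and fourth moment bounds \emph{locally}, over an arbitrary fixed interval $\fI\subseteq\T$:
\[
\int_{\fI}|\sigma_{\va,d}(x;N)|^2\,dx=\lambda(\fI)N+O(N^{o(1)}),\qquad
\int_{\fI}|\sigma_{\va,d}(x;N)|^4\,dx\ll\lambda(\fI)N^2,
\]
with implied constants independent of $\fI$ once $N$ is large in terms of $\fI$. Paley--Zygmund then gives $\lambda(B_N\cap\fI)\ge\varepsilon_0\lambda(\fI)$ for \emph{every} interval and \emph{every} large $N$, and Cassels' lemma (equivalently Lebesgue density) forces $\lambda(\cF_{\va,c,C}(d))=1$ immediately.

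The point you underspecify, and where the restriction $d=3$ or $d\ge5$ actually enters, is this local $L^4$ bound. Expanding on $\fI=[0,\delta]$ and integrating, the diagonal gives $2\delta N^2$ while the off-diagonal part is bounded by
\[
\sum_{0<|k|\le 4N^d}\frac{R_d(k,N)}{|k|},
\]
with $R_d(k,N)$ the number of representations $n_1^d+n_2^d-n_3^d-n_4^d=k$, $n_i\le N$. One therefore needs a power saving $R_d(k,N)\le N^{2-\kappa_d}$ \emph{uniformly for $k\ne0$}, which is strictly more than the homogeneous $k=0$ count behind the global $L^4$ bound. The paper gets this from Marmon's bound $R_d(k,N)\le N^{1+2/\sqrt d+o(1)}$ for $d\ge5$ and from an inhomogeneous adaptation of Hooley's sieve, $R_3(k,N)\le N^{11/6+o(1)}$, for $d=3$. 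Your diagnosis of the $d=4$ exclusion is slightly off: Euler's parametrised families concern $k=0$, and Skinner--Wooley already gives $R_4(0,N)=2N^2+O(N^{11/6+o(1)})$; the genuine obstruction is that no saving over $N^{2+o(1)}$ is available for $R_4(k,N)$ with $k\ne0$ (Marmon's exponent $1+2/\sqrt d$ equals $2$ exactly at $d=4$).
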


Note that  there are still  exceptional values $d=2$ and $d=4$ to which Theorem~\ref{thm:mon sum} does not apply. 
For $d=4$ we however are still able to show that the set $\cF_{\va, c}(d)$ is  everywhere massive, where 
\begin{align*}
\cF_{\va, c}(d)=\{x\in \TTT &:~|\sigma_{\va, d}(x; N)|\ge c N^{1/2}\\
& \qquad \quad \text{ for infinitely many } N\in \N\}.
\end{align*}
See Remark~\ref{rem:Hooley} for a possible approach to extending Theorem \ref{thm:mon sum} to cover the case $d=4$.

\begin{theorem}
\label{thm:mon sum 4}
Let  $0<c<1$. Then for any sequence of complex weights $\va=(a_n)_{n=1}^\infty$  with 
$|a_n|=1$,  and for any  interval $\fI \subseteq \TTT$  we have 
$$
\lambda \(\cF_{\va, c}(4) \cap \fI\)\ge  (\lambda(\fI)(1-c^{2}))^{2}/8.
$$ 
\end{theorem}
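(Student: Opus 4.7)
The plan is to apply a standard Paley--Zygmund (second moment) argument to the nonnegative function $F_N(x):=|\sigma_{\va,4}(x;N)|^2$ on the interval $\fI$, and then to convert ``positive measure for each large $N$'' into ``positive measure for infinitely many $N$'' via the set-theoretic Fatou inequality. Writing
$$
A_N := \{x \in \fI :\ |\sigma_{\va,4}(x;N)| \ge c N^{1/2}\},
$$
one has $\cF_{\va, c}(4) \cap \fI = \limsup_N A_N$, and since $\fI$ has finite Lebesgue measure, $\lambda(\limsup_N A_N) \ge \limsup_N \lambda(A_N)$. It therefore suffices to show $\liminf_N \lambda(A_N) \ge (\lambda(\fI)(1-c^2))^2/8$.

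For the \emph{second moment}, I would expand the square and use $|a_n|=1$: the diagonal $n=m$ yields exactly $N\lambda(\fI)$, while the off-diagonal is dominated by $\sum_{1\le m<n\le N} |n^4-m^4|^{-1}$, which converges absolutely (using $n^4-m^4 \ge 2n^2(n-m)$ and $\sum_n (\log n)/n^2 < \infty$), giving $\int_\fI F_N\,dx = N\lambda(\fI) + O(1)$ with an absolute implied constant. For the \emph{fourth moment}, I would bound trivially by the full-torus integral:
$$
\int_\fI F_N^2\, dx \;\le\; \int_0^1 |\sigma_{\va, 4}(x;N)|^4\, dx \;\le\; T(N),
$$
where $T(N)$ is the number of integer solutions to $n_1^4+n_2^4=m_1^4+m_2^4$ in $[1,N]^4$. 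The diagonal solutions (those with $\{n_1,n_2\}=\{m_1,m_2\}$ as multisets) contribute exactly $2N^2-N$, and the remaining non-trivial ones are $o(N^2)$ by Hooley's classical estimate on representations of an integer as a sum of two fourth powers. Hence $\int_\fI F_N^2\, dx \le 8 N^2$ for all $N$ sufficiently large.

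With these two moment bounds in hand, the Paley--Zygmund step is routine: splitting $\int_\fI F_N = \int_{A_N} F_N + \int_{\fI\setminus A_N} F_N$, bounding the latter by $c^2 N \lambda(\fI)$ (since $F_N < c^2 N$ off $A_N$) and the former by $\lambda(A_N)^{1/2}\bigl(\int_\fI F_N^2\bigr)^{1/2}$ via Cauchy--Schwarz, one obtains
$$
\lambda(A_N) \;\ge\; \frac{\bigl(N\lambda(\fI)(1-c^2) - O(1)\bigr)^2}{\int_\fI F_N^2\, dx} \;\ge\; \frac{(\lambda(\fI)(1-c^2))^2}{8}\,(1-o(1))
$$
for $N$ large, and $\liminf$ finishes the proof. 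The main technical point --- and the reason $d=4$ is singled out for only this weaker conclusion --- is the fourth-moment bound: the sharper Vinogradov--Hua $L^{2s}$ estimates that drive the full-measure conclusion of Theorem~\ref{thm:mon sum} for $d \ne 4$ are not available for $d=4$, so one must fall back on Hooley's delicate count of sums of two fourth powers to secure a uniform $T(N) \ll N^2$, and it is precisely the quantitative weakness of what is currently known in this direction that forces the \emph{positive measure on every interval} conclusion in place of full measure.
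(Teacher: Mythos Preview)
Your proof is correct and follows essentially the same route as the paper: an asymptotic for the second moment on $\fI$, a fourth-moment bound obtained by extending the integral to the full torus and invoking the fact that non-trivial solutions to $n_1^4+n_2^4=m_1^4+m_2^4$ are $o(N^2)$ (the paper cites Skinner--Wooley here, Lemma~\ref{lem:SkinWool}, rather than Hooley), followed by a limsup/Fatou step. The only cosmetic difference is that the paper runs the second-moment argument through Lemma~\ref{lem:measure} with an auxiliary upper cutoff $C$ which is then optimised, whereas your direct Paley--Zygmund/Cauchy--Schwarz step is in fact slightly sharper: had you used the true bound $(2+o(1))N^2$ in place of your conservative $8N^2$, you would obtain $(\lambda(\fI)(1-c^2))^2/2$ rather than the stated $/8$.
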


We remark that for fixed  constants $ C>c > 0$ our method does not yield that $\lambda \(\cF_{\va, c, C}(4) \cap \fI\)>0$ for every interval $\fI\subseteq \TTT$. Unfortunately the conclusion of Theorem~\ref{thm:mon sum 4}, that is 
$$
\lambda \(\cF_{\va, c}(4) \cap \fI\)\gg \lambda(\fI)^2, \qquad \forall\, \fI\subseteq \TTT,
$$
does not imply $\lambda(\cF_{\va, c}(4))=1$. Indeed, consider the set of $\cG_n$ of fractions $a/3^n$ with $1\le a\le 3^n$. 
We now define
$$
\cA =\TTT \cap \bigcup_{n\in \N}  \bigcup_{a/q \in \cG_n} [a/q-    3^{-n-2} n^{-2}, a/q + 3^{-n-2} n^{-2}]. 
$$
Clearly 
$$
\lambda (\cA) \le \frac{2}{9}   \sum_{n=1}^\infty  n^{-2}  =  \frac{2 \pi^2}{54} < 1.
$$
On the other hand, for each interval $\cJ = [x_0, x_0+\delta] \subseteq \TTT$ with $0<\delta<\delta_0$ for some small $\delta_0$  there is  an integer $n$ such that 
\begin{equation}
\label{eq:n-delta}
3^{-n}n^{3}\le \delta< 3^{-n+1}(n-1)^{3},
\end{equation}
and there are at least 
$ n^{3}/3$ fractions
$$
 a/3^n \in \cG_n \cap \cJ.
 $$
Hence combining with~\eqref{eq:n-delta} we obtain
$$
\lambda \(\cA \cap \cJ\)  \gg 3^{-n} n\gg  \delta  \(\log \delta^{-1}\)^{-2}.
$$
It is easy to see that one can modify this construction to replace $3^n$ with a faster growing function  and  $  \(\log \delta^{-1}\)^{-2}$
with  a  slower decaying function, in fact with an arbitrary slow rate of decay.

We now turn to the Weyl sums $S_d(\vx; N)$. First observe that for $\vx=(x_1, \ldots, x_d)$ we have 
$$
S_{\va, d}(\vx; N)=\sigma_{\vb, d}(x_d; N),
$$
where 
$$
b_n=a_n\e(x_1n+\ldots+x_{d-1}n^{d-1}).
$$
Thus for $d=3$ or $d\ge 5$ and  any fixed $(x_1, \ldots, x_{d-1})\in \T_{d-1}$, Theorem~\ref{thm:mon sum} implies $\lambda(\cF_{\vb, c, C}(d))=1$.  Together with Fubini's theorem we obtain  $\lambda(\cE_{\va, c, C}(d))=1$. By introducing a new idea we obtain the following  desired result for all $d\ge 2$.

\begin{theorem}
\label{thm:gen sum}
There exist positive constants  $c$,  $C$ such that for all $d\ge 2$ and 
 any sequence of complex weights $\va=(a_n)_{n=1}^\infty$  with 
$|a_n|=1$ we have    $\lambda(\cE_{\va, c, C}(d)) =1$.
\end{theorem}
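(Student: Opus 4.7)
The plan is to split the problem into two regimes. For $d = 3$ and $d \ge 5$, I would first apply Theorem~\ref{thm:mon sum} combined with Fubini's theorem: fixing $(x_1, \ldots, x_{d-1})$ one has $S_{\va,d}(\vx;N) = \sigma_{\vb,d}(x_d;N)$ where $b_n = a_n\e(x_1 n + \cdots + x_{d-1}n^{d-1})$ has $|b_n|=1$, so Theorem~\ref{thm:mon sum} yields full measure in $x_d$, and integrating out gives $\lambda(\cE_{\va,c,C}(d)) = 1$. The new idea is needed to handle the remaining cases $d = 2$ and $d = 4$, where the monomial result is unavailable.

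The key observation that makes the polynomial case tractable (even when the corresponding monomial one is not) is the pair of moment identities
$$\int_{\Tor}|S_{\va,d}(\vx;N)|^2\,d\vx = N, \qquad \int_{\Tor}|S_{\va,d}(\vx;N)|^4\,d\vx \le 2N^2,$$
valid for every $d \ge 2$ and every unit-modulus $\va$. The fourth-moment bound holds because the Diophantine conditions $n_1 + n_2 = n_3 + n_4$ and $n_1^2 + n_2^2 = n_3^2 + n_4^2$ already force $\{n_1,n_2\} = \{n_3,n_4\}$, so the remaining orthogonality conditions for $d \ge 3$ are automatic. This is precisely what fails in the monomial case and explains the restriction $d \in \{3\} \cup \{5,6,\ldots\}$ in Theorem~\ref{thm:mon sum}.

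Given these moments, the Paley--Zygmund inequality (together with a Markov bound on $|S|^4$ for the upper bound) supplies constants $0 < c < C$ and $\delta > 0$ such that, setting $A_N = \{\vx \in \Tor : c\sqrt{N} \le |S_{\va,d}(\vx;N)| \le C\sqrt{N}\}$, we have $\lambda(A_N) \ge \delta$ uniformly in $N$ and $\va$. To upgrade from positive to full measure of $\cE_{\va,c,C}(d) = \limsup_N A_N$, I would localise: for any cube $\fQ \subseteq \Tor$, expand $|S|^{2k}$ and estimate each off-diagonal oscillatory integral by $\prod_{j=1}^d \min(\lambda(\fQ)^{1/d}, |k_j|^{-1})$ to obtain $\int_\fQ |S|^2\,d\vx \sim N\lambda(\fQ)$ and $\int_\fQ |S|^4\,d\vx \ll N^2\lambda(\fQ)$ whenever the side-length of $\fQ$ exceeds a negative power of $N$. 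Local Paley--Zygmund then gives $\lambda(A_N \cap \fQ) \ge \delta \lambda(\fQ)$ for $N$ large relative to $\fQ$. A Kochen--Stone style second Borel--Cantelli argument along a sparse subsequence $N_k = 2^k$ (with quasi-independence coming from an analogous cross-moment bound $\int_\fQ |S(N_k)|^2 |S(N_l)|^2 \ll N_k N_l \lambda(\fQ)$ for $k \ne l$) yields $\lambda(\limsup_k A_{N_k} \cap \fQ) \gg \lambda(\fQ)$ for every $\fQ$; the Lebesgue density theorem then forces $\lambda(\cE_{\va,c,C}(d)) = 1$, since otherwise a density point of the complement would contradict the uniform local lower bound.

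The main obstacle is executing the localisation cleanly, ensuring in particular that the off-diagonal contributions to both $\int_\fQ |S|^2$ and $\int_\fQ |S|^4$ remain of lower order even when $\lambda(\fQ)$ shrinks with $N$. The counterexample following Theorem~\ref{thm:mon sum 4} shows that the linear-in-$\lambda(\fQ)$ scaling is essential here---a $\lambda(\fQ)^2$ bound does not suffice for full measure. What makes the linear bound accessible in the polynomial setting is precisely the extra $d-1$ averaging variables, which tighten the fourth moment to $2N^2$ without invoking the higher Vinogradov orthogonality conditions and thus work uniformly across all $d \ge 2$.
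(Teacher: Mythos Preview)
Your core strategy---local second and fourth moment bounds over cubes, then Paley--Zygmund, then the density theorem---is exactly the paper's route (Lemma~\ref{cor:main} together with Corollaries~\ref{cor:I1J1} and~\ref{cor:J2}), and the case split $d\in\{3\}\cup\{5,6,\ldots\}$ versus $d\in\{2,4\}$ is not needed: the paper handles all $d\ge2$ uniformly via the first two coordinates. Two points deserve comment.

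The Kochen--Stone detour is both unnecessary and, as written, does not work. Once $\lambda(A_N\cap\fQ)\ge\varepsilon_0\lambda(\fQ)$ holds for \emph{every} large $N$, continuity of measure alone gives
\[
\lambda\Bigl(\limsup_N A_N\cap\fQ\Bigr)=\lim_{M\to\infty}\lambda\Bigl(\bigcup_{N\ge M}A_N\cap\fQ\Bigr)\ge\varepsilon_0\lambda(\fQ),
\]
since $\bigcup_{N\ge M}A_N\supseteq A_M$; this is the paper's Corollary~\ref{cor:measure}, and no independence between the $A_N$ is required. Moreover, your cross-moment bound $\int_\fQ|S(\cdot;N_k)|^2|S(\cdot;N_l)|^2\ll N_kN_l\lambda(\fQ)$ only yields $\lambda(A_{N_k}\cap A_{N_l})\ll\lambda(\fQ)$, the trivial estimate, not the quasi-independence Kochen--Stone would need. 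Separately, the localised fourth moment is the only genuine work and your sketch (bound each off-diagonal oscillatory integral by $\prod_j\min(\delta,|k_j|^{-1})$) omits the essential counting step: the paper passes to a smooth majorant (Lemma~\ref{lem:eqntomv}) so that the fourth moment over $x_1,x_2$ is controlled by the number of $(n_1,\ldots,n_4)$ with $|n_1+n_2-n_3-n_4|\le\delta^{-1}$ and $|n_1^2+n_2^2-n_3^2-n_4^2|\le\delta^{-1}$, and then shows each off-diagonal count $Q(k,m,N)$ is $N^{1+o(1)}$ via an elementary change of variable and the divisor bound (Lemma~\ref{lem:aver quadr}). Your observation that the linear and quadratic constraints already force the diagonal is correct and is precisely what makes the argument uniform in $d$, but the off-diagonal error term still has to be estimated this way.
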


We remark that~\cite[Theorem~0.1]{FK} gives an optimal bound for the sums  
$S_2(\vx; N)$.  However, for sums with weights, Theorem~\ref{thm:gen sum} is new even for $d=2$.

It is interesting to understand whether  the constant $c$ of Theorem~\ref{thm:mon sum}  can be any arbitrary large (also whether the cases of $d=2, 4$ can be included in 
Theorem~\ref{thm:mon sum}).  More precisely we ask the following.

\begin{question}
\label{quest:large sigma d}
Let $d\ge 2$ and   $\va=(a_n)_{n=1}^\infty$ a sequence of complex weights  with 
$|a_n|=1$. Is this true that for almost all $x\in \TTT $  we have 
$$
\limsup_{N\rightarrow \infty} \frac{ \sigma_{\va, d}(x; N)}{\sqrt{N}}=\infty?
$$ 
\end{question}

We note for $d=1$ the answer to  Question~\ref{quest:large sigma d}, that is, for standard  trigonometric polynomials,
 is negative as by an explicit construction of Hardy and Littlewood~\cite[Section~4]{HL2} which states that for 
any $\xi \in \R$ with $\xi\ne 0$ we have
$$
\sup_{x \in \TTT } \left | \sum_{n=1}^N \e\(\xi n \log n + xn\)\right|  \ll N^{1/2}, 
$$
(where the implied constant may depend on $\xi$), 
see also a result of Rudin~\cite[Theorem~1]{Rud} who has shown the same ``flatness'' can be achieved
for partial sums trigonometric series with coefficients $a_n = \pm 1$.


\subsection{Results on the Hausdorff dimension} 
\label{sec:hausdroff}  
For Gauss sums~\eqref{eq:GaussSum} we have an optimal result in~\eqref{eq:GS}. 
However,  the Diophantine approximation argument of~\cite[Theorem~2]{FJK} does not work  for  Gauss sums with weights. Moreover, our method does not give positive measure for $\cF_{\va, c}(2)$ either. However, by introducing some new ideas we  obtain a lower bound of the Hausdorff dimension of the set $\cF_{\va, c}(2)$. Indeed our method works for more general functions  $f$. 

\begin{definition}
\label{def:HD}
The  Hausdorff dimension of a set $\cA\subseteq \R^{d}$ is defined as 
\begin{align*}
\dim \cA=\inf\Bigl\{s>0:~\forall \, & \eps>0,~\exists \, \{ \cU_i \}_{i=1}^{\infty}, \ \cU_i \subseteq \R^{d},\\
&  \text{such that } \cA\subseteq \bigcup_{i=1}^{\infty} \cU_i \text{ and } \sum_{i=1}^{\infty}\(\diam\cU_i\)^{s}<\eps \Bigr\}.
\end{align*}
\end{definition}
We refer  to~\cite{Falconer, Mattila1995} for a background on the Hausdorff dimension.

\begin{theorem}
\label{thm:f}
Let $f$ be a real, twice  differentiable function with continuous second derivative satisfying 
$$
  f''(t)=t^{\gamma-2+o(1)}
$$ 
for some $\gamma> 2$. Then for any interval $\fI\subseteq \R$ the Hausdorff dimension of the set of $x\in \fI$ such that
$$
\left|\sum_{1\le n \le N}a_n\e(xf(n))\right| \gg N^{1/2} \quad \text{for infinitely many $N$,}
$$
where the implied constant may depend  on the function $f$, 
is at least $1-1/(2\gamma)$.
\end{theorem}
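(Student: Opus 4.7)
The plan is to construct a Cantor-type subset of the set in Theorem~\ref{thm:f} and apply the mass distribution principle to bound its Hausdorff dimension from below.

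I would first establish moment bounds for the sum $S(x;N) := \sum_{n \le N} a_n \e(xf(n))$. The hypothesis $f''(t) = t^{\gamma - 2 + o(1)}$ with $\gamma > 2$ implies $|f(m) - f(n)| \gg |m - n|\,\max(m,n)^{\gamma - 1}$ for $m \ne n$; expanding the square and bounding off-diagonal terms by $\sum_{m \ne n} \min(|J|, 1/|f(m) - f(n)|)$, a dyadic summation gives
\[
\int_J |S(x;N)|^2\, dx = N |J| (1 + o(1))
\]
for any subinterval $J \subseteq \fI$ with $N|J| \to \infty$, together with a parallel fourth-moment bound $\int_J |S(x;N)|^4\, dx \ll N^2 |J| + N^{4 - \gamma}$.

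Next, fix a sequence $N_1 < N_2 < \cdots$ growing geometrically with large ratio $\kappa$, and set $\delta_k := N_k^{-\gamma}$. I would inductively construct nested families $\cI_1 \supseteq \cI_2 \supseteq \cdots$ of disjoint closed subintervals of $\fI$ with every $I \in \cI_k$ of length $\delta_k$, contained in some parent $I' \in \cI_{k-1}$, and satisfying $|S(x; N_k)| \ge c(f)\, N_k^{1/2}$ throughout $I$. The key inductive step: given $I' \in \cI_{k-1}$, first restrict to the subset $B := \{x \in I' : |S(x; N_k)| \le C N_k^{3/4}\}$, which satisfies $|B| = |I'|(1 - o(1))$ by a fourth-moment Chebyshev bound. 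On $B$ the pointwise bound $|S| \le C N_k^{3/4}$ combined with the lower bound $\int_B |S|^2 \gg N_k |I'|$ (the contribution from $I' \setminus B$ being negligible by Cauchy--Schwarz against the fourth moment) yields, via an elementary Chebyshev argument,
\[
\bigl|\{x \in I' : |S(x; N_k)| \ge c N_k^{1/2}\}\bigr| \gg \delta_{k-1} / \sqrt{N_k}.
\]
Since $|S(\cdot; N_k)|$ is approximately constant on scale $\delta_k = N_k^{-\gamma}$, this produces at least $\gg (N_k / N_{k-1})^{\gamma - 1/2}$ good sub-intervals of $I'$. Setting $E := \bigcap_k \bigcup_{I \in \cI_k} I$, which is contained in the set of Theorem~\ref{thm:f}, and defining the probability measure $\mu$ on $E$ by distributing mass uniformly across intervals at each level, the inductive count $|\cI_k| \asymp N_k^{\gamma - 1/2}$ combined with $\delta_k = N_k^{-\gamma}$ verifies $\mu(B(x, r)) \ll r^{1 - 1/(2\gamma) - o(1)}$ for small $r > 0$, and the mass distribution principle yields $\dim E \ge 1 - 1/(2\gamma)$.

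The main obstacle is the quantitative count of good sub-intervals. A direct Paley--Zygmund argument using the second and fourth moments can in certain regimes yield a positive proportion of the parent interval --- which would give dimension $1$ --- but this requires a delicate interplay between $\kappa$ and $\gamma$ and is not robust. The truncation to $B$ provides a cleaner alternative: by replacing the trivial pointwise bound $|S| \le N_k$ with the much smaller $|S| \le C N_k^{3/4}$, Chebyshev gives only a proportion $1/\sqrt{N_k}$ of $I'$, uniformly, which leads precisely to the exponent $1 - 1/(2\gamma)$. Carrying out this analysis uniformly over weights $a_n$ of modulus one and across the nested scale hierarchy is the technical heart of the argument.
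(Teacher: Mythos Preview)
Your overall architecture --- build a Cantor set by iterating a ``many good subintervals'' lemma and apply the mass distribution principle --- matches the paper's. The gap is in the analytic input.

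The claimed bound $\int_J |S(x;N)|^4\,dx \ll N^2|J| + N^{4-\gamma}$ is not ``parallel'' to the second moment estimate, and you give no argument for it. The second moment only needs lower bounds on the first differences $|f(m)-f(n)|$, which follow from the mean value theorem and $f'(t)=t^{\gamma-1+o(1)}$. The fourth moment, by contrast, requires control over the values $f(n_1)+f(n_2)-f(n_3)-f(n_4)$: both how many quadruples give $\Delta=0$ and how the nonzero $\Delta$'s are spaced. For $f(n)=n^d$ the paper obtains such control only through deep arithmetic input (Skinner--Wooley, Marmon, Hooley; see Lemmas~\ref{lem:SkinWool}--\ref{lem:Hooley}), and even then fails for $d=2,4$. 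For a general $f$ satisfying only $f''(t)=t^{\gamma-2+o(1)}$ there is no reason to expect any usable bound on the additive energy of $\{f(n):n\le N\}$, and none is known. So your truncation set $B$ cannot be shown to have nearly full measure, and the Chebyshev step collapses.

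The paper sidesteps this entirely. Instead of a fourth moment, Lemma~\ref{lem:variance} bounds the \emph{variance of the local second moment}
\[
\int_{x_1}^{x_1+\varepsilon_1}\Bigl(\int_{x_0}^{x_0+\varepsilon_0}|S|^2\,dx-\varepsilon_0 N\Bigr)^2 dx_0.
\]
After Cauchy--Schwarz this reduces to sums over the quantities $\Delta_h(n)=(f(n+h)-f(n))/h$ and their differences $\Delta_h(n)-\Delta_h(m)$, which are controlled by the mean value theorem using only $f'$ and $f''$. No four-term additive information is needed. This variance bound then yields Lemma~\ref{lem:largeshort}: inside any interval of length $\ge N^{-\gamma+2}$ there are $\gg N^{\gamma-1/2-\tau}|\cI|$ pairwise $N^{-\gamma+1/2+\tau}$-separated subintervals of length $N^{-\gamma+1/2+\tau}$ on which the dyadic sum is $\gg N^{1/2}$. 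That is the inductive step you need.

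Two smaller points. First, geometric growth of $N_k$ does not work: with $\delta_{k-1}=N_{k-1}^{-\gamma}$ your count of children is $\delta_{k-1}N_k^{\gamma-1/2}=N_k^{\gamma-1/2}N_{k-1}^{-\gamma}$, which tends to zero if $N_k=\kappa N_{k-1}$. You need $N_k$ to grow so fast that $\prod_{j<k}N_j = N_k^{o(1)}$, as in the paper. Second, to apply the mass distribution principle you need the good subintervals to be well separated inside each parent (otherwise mass can concentrate); the paper's Lemma~\ref{lem:largeshort} builds in $N^{-\gamma+1/2+\tau}$-separation, but your measure bound $|\{|S|\ge cN_k^{1/2}\}|\gg \delta_{k-1}/\sqrt{N_k}$ by itself does not prevent clustering.
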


If we impose conditions only on the first derivative of  the function $f$ in Theorem~\ref{thm:d=2}  we obtain the following weaker bound.

\begin{theorem}
\label{thm:d=2} Let $f$ be a real, continuously differentiable function such that 
$$
f'(t)= t^{\gamma-1+o(1)} 
$$ 
for some $\gamma>1$. Then for any complex weights $\va=(a_n)_{n=1}^\infty$  with 
$|a_n|=1$ and any interval $\fI \subseteq \R$ the Hausdorff dimension of the set of $x\in \fI$ such that
$$
\left|\sum_{1\le n \le N}a_n\e(xf(n))\right| \gg N^{1/2} \quad \text{for infinitely many $N$,}
$$
where the implied constant may depend on the function $f$, 
is at least $1-1/\gamma$.
\end{theorem}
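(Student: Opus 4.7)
The plan is to combine a local $L^2$ second-moment estimate with an iterative Cantor-type construction, and to obtain the Hausdorff dimension bound via the mass distribution principle. Throughout, I write $T(x,N)=\sum_{n\le N}a_n\e(xf(n))$ for brevity.

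The key analytic input is the second-moment identity
\[
\int_I|T(x,N)|^2\,dx
=N|I|+O\!\Biggl(\sum_{\substack{n,m\le N\\ n\ne m}}\frac{1}{|f(n)-f(m)|}\Biggr)
\]
valid for every sub-interval $I\subseteq \fI$. By the mean value theorem and the hypothesis $f'(t)=t^{\gamma-1+o(1)}$, one has $|f(n)-f(m)|\gg|n-m|\min(n,m)^{\gamma-1}$, so the off-diagonal term is bounded by $N^{\max(0,2-\gamma)+o(1)}$. Hence, provided $|I|$ exceeds an appropriate negative power of $N$, the diagonal term dominates, and Chebyshev's inequality combined with the trivial bound $|T(x,N)|\le N$ yields $\lambda\{x\in I:|T(x,N)|\ge c\sqrt N\}\gg|I|/N$. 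Moreover, since $|\partial_x T(x,N)|\le 2\pi\sum_{n\le N}f(n)\ll N^{\gamma+1}$, this super-level set is a union of intervals, each of length $\gg N^{-\gamma-1/2}$.

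I then construct a Cantor set $K$ iteratively. At each level $k$, every current interval of length $\ell_{k-1}$ is replaced by a family of disjoint sub-intervals of length $\ell_k$, each contained in $\{|T(\cdot,N_k)|\ge c\sqrt{N_k}\}$. The parameters $\ell_k,N_k$ are selected so that (i) the $L^2$ estimate is valid at scale $\ell_{k-1}$, (ii) $\ell_k$ does not exceed the guaranteed component length $\gg N_k^{-\gamma-1/2}$ of the super-level set, and (iii) the $M_k$ children per parent fit inside the super-level set, whose measure is $\gg\ell_{k-1}/N_k$. Balancing these constraints yields $\ell_k\asymp\ell_{k-1}^{c(\gamma)}$ for some $c(\gamma)>1$, together with a count $M_k\asymp(\ell_{k-1}/\ell_k)^{1-1/\gamma}$. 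Every point of $K=\bigcap_k\bigcup I$ then satisfies $|T(x,N_k)|\ge c\sqrt{N_k}$ for infinitely many $k$, as desired.

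Finally, assigning equal mass to the $M_k$ children per parent at each level produces a Borel probability measure $\mu$ on $K$; checking the ball bound $\mu(B(x,r))\ll r^{1-1/\gamma}$ at every scale, the mass distribution principle then delivers $\dim K\ge 1-1/\gamma$. The main obstacle is arranging that the $M_k$ children are sufficiently \emph{spread out} inside each parent, since a tight cluster of children in a small sub-portion of the parent would inflate $\mu$ on an intermediate ball and violate the required bound. Extracting this spread appears to require applying the $L^2$ lower bound not just globally on each parent but also at carefully chosen intermediate local scales, thereby forcing the super-level set to occupy a positive proportion of every sufficiently large sub-region rather than accumulating in a small pocket.
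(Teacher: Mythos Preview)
Your overall strategy---local $L^2$ second-moment bound, iterated Cantor construction, mass distribution principle---matches the paper's, and you correctly identify the critical obstacle of ensuring the children are well spread within each parent. The paper implements precisely the remedy you sketch in your last sentence: at each stage it partitions the parent interval into $N_{k+1}$ equal pieces, each of length just above the $L^2$-threshold $\asymp L^{1-\gamma+\tau}$, and applies the second-moment lower bound (Lemma~\ref{lem:aver f}) \emph{to every piece separately}. This produces one large point and hence one child per piece, so the resulting family forms an $\cI(N_{k+1},N_{k+1},\delta_{k+1})$-pattern in the sense of Definition~\ref{def:p}; the spread is then automatic and the Chebyshev/trivial-upper-bound manoeuvre you propose is not needed.

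There is, however, a genuine gap in your parameter accounting. Your crude derivative estimate $|\partial_x T(x,N)|\le 2\pi\sum_{n\le N}|f(n)|\ll N^{\gamma+1}$ yields children of length only $\asymp N^{-\gamma-1/2}$. Combined with the piece count $M_k\asymp \ell_{k-1}N^{\gamma-1}$ (which is what the corrected spread argument delivers, one child per piece of size $\asymp N^{1-\gamma}$), the Cantor dimension formula gives
\[
\frac{\log M_k}{\log(1/\delta_k)}\to \frac{\gamma-1}{\gamma+1/2},
\]
strictly below the target $1-1/\gamma$. The paper avoids this loss by a partial-summation continuity lemma (Lemma~\ref{lem:1} and Corollary~\ref{cor:max}): writing $T(x,N)-T(y,N)=\sum_{n\le N}(1-\e((y-x)f(n)))a_n\e(xf(n))$ and summing by parts against the partial sums of $a_n\e(xf(n))$, one obtains
\[
|T(x,N)-T(y,N)|\ll |x-y|\,f(N)\,\max_{M\le N}|T(x,M)|,
\]
which preserves the bound $\max_{M\le N}|T(\cdot,M)|\gg N^{1/2}$ on intervals of length $\asymp f(N)^{-1}\asymp N^{-\gamma}$, a factor $N^{1/2}$ longer than your estimate allows. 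With this child length the dimension comes out as $(\gamma-1)/\gamma=1-1/\gamma$. This Abel-summation trick, replacing $\sum_{n\le N}|f(n)|$ by $f(N)\max_M|T|$, is the ingredient your proposal is missing.
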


Theorems~\ref{thm:f} and~\ref{thm:d=2}  are based on some results on the distribution of values of 
exponential polynomials,  which we develop in Section~\ref{sec:ExpPoly}.

\subsection{Applications to uniform distribution modulo one} 

We now  show some applications of our main results to the theory of uniform distribution of sequences.  

Let $\xi_n$, $n\in \N$,  be a sequence in $\TTT$. The {\it discrepancy\/}  of this sequence at length $N$ is defined as 
\begin{equation}
\label{eq:Discr}
D_N = \sup_{0\le a<b\le 1} \left |  \#\{1\le n\le N:~\xi_n\in (a, b)\} -(b-a) N \right |.
\end{equation} 

Recall that a sequence is uniformly distributed modulo one if and only if the corresponding discrepancy satisfies
$$
D_N=o(N) \qquad \text{as} \ N \to \infty,
$$
see~\cite[Theorem~1.6]{DrTi} for a proof.  We note that sometimes in the literature the scaled quantity $N^{-1}D_N $ is   called the discrepancy, but 
since our argument looks cleaner with the definition~\eqref{eq:Discr}, we adopt it here. 

For $\vx\in \Tor$ and the sequence 
$$
\xi_n=x_1n+\ldots +x_d n^{d}, \qquad n\in \N,
$$
we  denote by $D_d(\vx; N)$ the corresponding discrepancy.  Motivated by the work of Wooley~\cite[Theorem~1.4]{Wool3}, 
it has been  shown in~\cite{ChSh-IMRN}  
that for almost all $\vx\in \Tor$ with $d\ge 2$ one has 
$$
D_d(\vx; N)\le N^{1/2+o(1)} \qquad \text{as} \ N \to \infty.
$$

Recalling the  {\it Koksma-Hlawlka inequality\/}, see~\cite[Theorem~1.14]{DrTi} for a general statement, we  derive 
for any $\vx\in \Tor$
$$
S_d(\vx; N)\ll D_d(\vx; N).
$$
Combining with Theorem~\ref{thm:gen sum} we conclude that there is a constant $c>0$ such that  for almost all $\vx\in \T_d$,
$$
D_d(\vx; N)\ge c N^{1/2}
$$
holds for infinitely many $N\in \N$. 

Similarly,   other results  from  Section~\ref{sec:main}   lead to lower bounds of the discrepancy of the corresponding sequences.

\section{Preliminaries}

\subsection{Reduction to power moments} 
We first show how our results 
of Section~\ref{sec:Lebesgue} can be reduced to estimating  the second and fourth moment of exponential sums. Our first result is a variation of Cassels~\cite[Lemma~1]{Cas}.

\begin{lemma}
\label{lem:measure}
Let $\cX\subseteq \T_d$  be measurable with $\lambda(\cX)>0$. Let $f:\T_d\rightarrow[0,N]$ be a continuous function. Suppose that there are positive constants $\alpha_1, \alpha_2$ such that 
\begin{equation}
\label{eq:2}
\int_{\cX} f(\vx)^{2}d\vx \ge \alpha_1 N \lambda(\cX)
\end{equation}
and 
\begin{equation}
\label{eq:4}
\int_{\cX} f(\vx)^{4} d\vx \le \alpha_2 N^2 \lambda(\cX).
\end{equation}
Then for any constants $c, C>0$ we have 
$$
\lambda\left(\left\{\vx\in \cX:~cN^{1/2}\le f(\vx)\le  CN^{1/2}\right\} \right ) \ge \varepsilon_0\lambda(\cX),  
$$
where 
$$
\varepsilon_0=(\alpha_1-c^2-\alpha_2/C^2)/C^2.
$$
\end{lemma}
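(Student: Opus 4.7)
The plan is to partition $\cX$ into the three regions where $f$ is small, medium, and large compared to $N^{1/2}$, and then use the second moment lower bound as the "engine" while controlling the tails via the fourth moment upper bound on the large side and the pointwise upper bound on the small side. Concretely, write $\cX = \cX_1 \cup \cX_2 \cup \cX_3$ where
\begin{align*}
\cX_1 &= \{\vx \in \cX : f(\vx) < c N^{1/2}\},\\
\cX_2 &= \{\vx \in \cX : c N^{1/2} \le f(\vx) \le C N^{1/2}\},\\
\cX_3 &= \{\vx \in \cX : f(\vx) > C N^{1/2}\},
\end{align*}
so that the quantity we want to lower-bound is exactly $\lambda(\cX_2)$.

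The first step is to bound each of the three contributions to $\int_\cX f(\vx)^2 \, d\vx$ from above. On $\cX_1$ the pointwise bound $f < cN^{1/2}$ gives $\int_{\cX_1} f^2 \, d\vx \le c^2 N \lambda(\cX)$. On $\cX_2$ the defining inequality yields $\int_{\cX_2} f^2 \, d\vx \le C^2 N \lambda(\cX_2)$. The key trick is on $\cX_3$: since $f \ge C N^{1/2}$ there, we have $f^2 \le f^4 / (C^2 N)$, and therefore
$$
\int_{\cX_3} f(\vx)^2 \, d\vx \le \frac{1}{C^2 N} \int_{\cX_3} f(\vx)^4 \, d\vx \le \frac{\alpha_2}{C^2}\, N \lambda(\cX),
$$
using the hypothesis~\eqref{eq:4}.

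Adding the three bounds and comparing with the lower bound~\eqref{eq:2} gives
$$
\alpha_1 N \lambda(\cX) \le \int_\cX f(\vx)^2 \, d\vx \le \left(c^2 + \frac{\alpha_2}{C^2}\right) N \lambda(\cX) + C^2 N \lambda(\cX_2),
$$
from which solving for $\lambda(\cX_2)$ yields the claimed bound $\lambda(\cX_2) \ge \varepsilon_0 \lambda(\cX)$ with $\varepsilon_0 = (\alpha_1 - c^2 - \alpha_2/C^2)/C^2$, as desired.

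There is no serious obstacle here: the argument is an elementary moment comparison in the spirit of the second moment / fourth moment trick (sometimes called the Paley--Zygmund inequality, and essentially Cassels~\cite[Lemma~1]{Cas}), and the only slightly non-obvious point is the use of $f^2 \le f^4/(C^2 N)$ on $\cX_3$ to transfer control of the upper tail from the fourth moment to the second moment without losing the $N$-dependence. The continuity of $f$ is not actually needed for this measure-theoretic step; it is presumably imposed only to ensure measurability of the three level sets, which is automatic here.
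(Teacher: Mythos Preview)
Your proof is correct and essentially identical to the paper's: the paper defines the same three-way decomposition $\cA_c=\cX_1$, $\cB_C=\cX_3$, $\cR_{c,C}=\cX_2$, bounds $\int_{\cB_C} f^2$ via $f^2\le f^4/(C^2N)$ and~\eqref{eq:4}, and then compares with~\eqref{eq:2} exactly as you do. Your remarks on the Paley--Zygmund/Cassels provenance and on the role of continuity are also on point.
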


\begin{proof}
Denote 
\begin{align*}
&\cA_{c}=\left\{\vx\in \cX:~f(\vx)< c N^{1/2}\right\},\\
&\cB_{C}=\left\{\vx\in \cX:~f(\vx)> C N^{1/2}\right \}, 
\end{align*}
and 
$$
\cR_{c, C}= \cX \setminus \(\cA_{c}\cup \cB_{C}\).
$$

Since $f$ is continuous, the sets $\cA_c,\cB_C,\cR_{c,C}$ are measurable. We note 
that~\eqref{eq:4} implies 
$$
\int_{\cB_{C}} f(\vx)^2  d\vx \le \frac{1}{C^2N} \int_{\cX} f(\vx)^{4}  d\vx
\le    \alpha_2 N\lambda(\cX)/C^2.
$$
Taking a decomposition of  $\cX$ as $\cX=\cA_c\cup \cB_C\cup \cR_{c, C}$, we obtain 
$$
\int_{\cX} f(x)^{2} d\vx \le c^{2} N \lambda(\cX)+ \alpha_2 N\lambda(\cX)/C^2+\int_{\cR_{c, C}} f(\vx)^{2}d\vx.
$$
Combining with~\eqref{eq:2} and using that $f(\vx)\le CN^{1/2}$ whenever $\vx \in R_{c, C}$ gives
$$
\lambda(\cR_{C, c})\ge \lambda(\cX) \(\alpha_1-c^2-\alpha_2/C^2\)/C^2,
$$
which finishes the proof. 
\end{proof}

\begin{remark}
\label{rem:Lp}
We remark that the  bound~\eqref{eq:4} on the $L^4$-norm appears naturally in our argument. 
However, suppose that  for some $r>2$ we have the following bound on the  $L^r$-norm 
$$
\int_{\cX} f(\vx)^{r} d\vx \le \alpha_2 N^{r/2} \lambda(\cX). 
$$
Then we obtain the desired result of Lemma~\ref{lem:measure} as well.
\end{remark}

\begin{cor}
\label{cor:measure} Let $\cE_{\va, c, C}(d)$ be given by~\eqref{eq:Ecd-gen}. Suppose that for each cube $\fQ\subseteq \T_d$ and each integer $N$ which is sufficiently large (in terms of $\fQ$) we have 
\begin{equation} 
\begin{split}
\label{eq:S24}
&\int_{\fQ}|S_{\va, d}(\vx; N)|^2 d\vx \ge  \alpha_1\lambda(\fQ)N, \\
&\int_{\fQ}|S_{\va, d}(\vx; N)|^4d\vx \le \alpha_2\lambda(\fQ)N^2.
\end{split}
\end{equation} 
Then 
$$\lambda(\cE_{\va, c, C}(d)\cap \fQ)\ge \varepsilon_0  \lambda(\fQ),$$   
where 
$$
\varepsilon_0=(\alpha_1-c^2-\alpha_2/C^2)/C^2.
$$
\end{cor}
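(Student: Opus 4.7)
\textbf{Proof plan for Corollary~\ref{cor:measure}.} The plan is to apply Lemma~\ref{lem:measure} for each sufficiently large $N$ with $\cX = \fQ$ and $f(\vx) = |S_{\va, d}(\vx;N)|$, then combine the resulting family of lower bounds via a limsup argument to extract a set of $\vx$ for which the two-sided bound holds for infinitely many $N$.

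First I would verify the hypotheses of Lemma~\ref{lem:measure}. For fixed $N$, the function $f(\vx) = |S_{\va,d}(\vx;N)|$ is continuous on $\T_d$ (being the modulus of a trigonometric polynomial) and satisfies the trivial bound $f(\vx) \le N$ from $|a_n|=1$. The two moment hypotheses~\eqref{eq:2} and~\eqref{eq:4} on $\cX = \fQ$ are exactly the assumptions~\eqref{eq:S24} with the same constants $\alpha_1, \alpha_2$. Hence Lemma~\ref{lem:measure} yields, for every $N \ge N_0(\fQ)$,
\begin{equation*}
\lambda(A_N) \ge \varepsilon_0 \lambda(\fQ), \qquad A_N := \left\{\vx \in \fQ : cN^{1/2} \le |S_{\va,d}(\vx;N)| \le CN^{1/2}\right\},
\end{equation*}
with $\varepsilon_0 = (\alpha_1 - c^2 - \alpha_2/C^2)/C^2$ as in the statement.

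Next I would pass from the per-$N$ bound to the limsup set. By the definition~\eqref{eq:Ecd-gen},
\begin{equation*}
\cE_{\va, c, C}(d) \cap \fQ = \bigcap_{M \ge N_0} \bigcup_{N \ge M} A_N.
\end{equation*}
For each $M \ge N_0$ the union contains $A_M$, so $\lambda\bigl(\bigcup_{N \ge M} A_N\bigr) \ge \varepsilon_0 \lambda(\fQ)$. Since these unions form a decreasing sequence of subsets of the finite-measure set $\fQ$, continuity of Lebesgue measure from above gives
\begin{equation*}
\lambda\bigl(\cE_{\va,c,C}(d) \cap \fQ\bigr) = \lim_{M \to \infty} \lambda\Bigl(\bigcup_{N \ge M} A_N\Bigr) \ge \varepsilon_0 \lambda(\fQ),
\end{equation*}
which is the desired conclusion.

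There is no real obstacle here beyond correct bookkeeping: once the moment hypotheses are in place, Lemma~\ref{lem:measure} produces a uniform lower bound on $\lambda(A_N)$, and the standard measure-theoretic limsup argument (continuity from above on the bounded set $\fQ$) does the rest. The only minor point to be careful about is that the threshold $N_0$ may depend on $\fQ$, which is why the hypothesis is stated for $N$ sufficiently large in terms of $\fQ$; this is harmless since we only need the tail $N \ge N_0$ to define the limsup.
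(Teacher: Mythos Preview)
Your proposal is correct and follows essentially the same route as the paper: apply Lemma~\ref{lem:measure} with $\cX=\fQ$ and $f=|S_{\va,d}(\cdot;N)|$ to get a uniform lower bound on the level sets $A_N$, then use continuity of Lebesgue measure from above on the decreasing unions to pass to the limsup set. If anything you are slightly more careful than the paper in tracking the threshold $N_0(\fQ)$ and in noting that $\bigcap_{M\ge 1}\bigcup_{N\ge M}A_N=\bigcap_{M\ge N_0}\bigcup_{N\ge M}A_N$ since the unions are decreasing.
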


 \begin{proof}
Define
$$
\cL_{N,c, C}=\left\{ \vx\in \fQ :~cN^{1/2}\le |S_{\va, d}(\vx; N)|\le CN^{1/2}\right\}
$$
so that 
$$
\cE_{\va, c, C}(d)=\bigcap_{M \ge 1} \bigcup_{N\ge M} \cL_{N, c,C}.
$$
From Lemma~\ref{lem:measure} and~\eqref{eq:S24}, for each $N\ge 1$ we have 
$$\lambda(\cL_{N,c, C})\ge \varepsilon \lambda(\fQ).$$ 
Hence by continuity of Lebesgue measure, see  for example~\cite[Theorem~1.4,~(4)~(ii)]{Mattila1995}, we get
$$
\lambda \left (\bigcap_{M \ge 1} \bigcup_{N\ge M} \cL_{N,c, C}\right )=\lim_{M\rightarrow \infty} \lambda\(\bigcup_{N\ge M} \cL_{N,c, C} \) \ge \varepsilon_0\lambda(\fQ),
$$
which completes the proof.
\end{proof}

The following is a variant of a result due to Cassels~\cite{Cas}, see also~\cite[Lemma~2]{Gall}.

\begin{lemma}
\label{lem:cass}
Let $\fQ_k\subseteq \R^d$ be a sequence of cubes and $\fU_k\subseteq \R^{d}$ a sequence of Lebesgue measurable sets, $k =1,2\ldots$,  such that for some positive $\varepsilon<1$ 
$$
\fU_k\subseteq \fI_k, \quad \lambda(\fU_k)\ge \varepsilon \lambda(\fQ_k), \quad \lambda(\fQ_k)\rightarrow 0.
$$
Then the set of points which belong to infinitley many $\fQ_k$ has the same measure as the set of points which belong to infinitley many of the $\fU_k$.
\end{lemma}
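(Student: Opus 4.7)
The plan is to run the standard Cassels--Gallagher style density argument. Write $A = \limsup_k \fQ_k = \bigcap_{M\ge 1}\bigcup_{k\ge M}\fQ_k$ for the set of points lying in infinitely many $\fQ_k$ and $B = \limsup_k \fU_k$ for the set of points lying in infinitely many $\fU_k$. Since $\fU_k\subseteq \fQ_k$ we have $B\subseteq A$, so it suffices to show $\lambda(A\setminus B)=0$.

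Suppose for contradiction that $\lambda(A\setminus B)>0$. Setting $V_M = \bigcup_{k\ge M}\fU_k$, we have $B = \bigcap_M V_M$ and hence
$$
A\setminus B = A \cap \bigcup_{M\ge 1} V_M^c = \bigcup_{M\ge 1} \(A\cap V_M^c\).
$$
By countable subadditivity there is some $M$ with $\lambda(A\cap V_M^c)>0$. Fix such an $M$, and by the Lebesgue density theorem choose a point $x\in A\cap V_M^c$ of density $1$ in $A\cap V_M^c$. Since $x\in A$, there is an infinite sequence of indices $k\ge M$ for which $x\in \fQ_k$; along this subsequence $\lambda(\fQ_k)\to 0$, so the cubes $\fQ_k$ have diameter tending to $0$.

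The main technical point is to pass from density relative to balls centred at $x$ to density relative to the cubes $\fQ_k$, which merely contain $x$. Each such cube is contained in a ball $\sfB(x,r_k)$ with $r_k\ll \diam \fQ_k$, and satisfies $\lambda(\fQ_k)\ge c_d\,\lambda(\sfB(x,r_k))$ for a dimensional constant $c_d>0$. Combining this regularity with the Lebesgue density property at $x$ yields
$$
\frac{\lambda\(\fQ_k\cap (A\cap V_M^c)\)}{\lambda(\fQ_k)} \ge 1 - \frac{1}{c_d}\cdot\frac{\lambda\(\sfB(x,r_k)\setminus (A\cap V_M^c)\)}{\lambda(\sfB(x,r_k))} \longrightarrow 1
$$
as $k\to\infty$ along our subsequence.

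On the other hand, since $k\ge M$ we have $\fU_k\subseteq V_M$, so $\fU_k\cap V_M^c=\emptyset$, and therefore $\fQ_k\cap(A\cap V_M^c)\subseteq \fQ_k\setminus \fU_k$. By hypothesis $\lambda(\fU_k)\ge \varepsilon\lambda(\fQ_k)$, so
$$
\frac{\lambda\(\fQ_k\cap (A\cap V_M^c)\)}{\lambda(\fQ_k)}\le 1-\varepsilon,
$$
which contradicts the previous display once $k$ is large enough (so that the $o(1)$ error is smaller than $\varepsilon$). Hence $\lambda(A\setminus B)=0$, and the claim follows. I expect the only subtle step is the regularity estimate for off-centre cubes in the density argument; everything else is bookkeeping.
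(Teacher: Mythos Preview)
Your proof is correct and is exactly the standard Cassels--Gallagher density argument. Note that the paper itself does not supply a proof of this lemma at all: it simply states the result and refers the reader to Cassels~\cite{Cas} and~\cite[Lemma~2]{Gall}, whose argument is precisely the one you have reproduced (Lebesgue density point combined with the regularity of cubes to force the contradiction $1-o(1)\le 1-\varepsilon$). There is nothing to compare beyond this; your write-up could serve as the omitted proof.
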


Combining Corollary~\ref{cor:measure} with Lemma~\ref{lem:cass}, we show that the equality $\lambda(\cE_{\va, c, C}(d))=1$ follows from moment estimates for Weyl sums. Note that we could also derive this conclusion from Corollary~\ref{cor:measure}  and the Lebesgue density theorem~\cite[Corollary~2.14]{Mattila1995}.

\begin{lemma}
\label{cor:main}
Suppose that for each cube $\fQ\subseteq \T_d$ and each integer $N$ which is sufficiently large (in terms of $\fQ$) we have 
\begin{equation}
\label{eq:S24-1}
\int_{\fQ}|S_{\va, d}(\vx; N)|^2d\vx\gg \lambda(\fQ)N, \quad \int_{\fQ}|S_{\va, d}(\vx; N)|^4d\vx\ll \lambda(\fQ)N^2.
\end{equation}
Then there are positive constants $c$,  $C$ such that  $\lambda(\cE_{\va, c, C}(d))=1$.
\end{lemma}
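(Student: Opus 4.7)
The plan is to combine Corollary~\ref{cor:measure} with a standard density upgrade. By hypothesis~\eqref{eq:S24-1} there exist absolute constants $\alpha_1,\alpha_2>0$ (independent of the cube $\fQ$) such that for every cube $\fQ\subseteq\T_d$ and every sufficiently large $N$,
$$
\int_{\fQ}|S_{\va,d}(\vx;N)|^2\,d\vx\ge \alpha_1\lambda(\fQ)N,\qquad \int_{\fQ}|S_{\va,d}(\vx;N)|^4\,d\vx\le \alpha_2\lambda(\fQ)N^2.
$$
First, I would fix $c>0$ sufficiently small and $C>0$ sufficiently large so that
$$
\varepsilon_0:=\bigl(\alpha_1-c^2-\alpha_2/C^2\bigr)/C^2>0.
$$
Corollary~\ref{cor:measure} then yields the uniform density bound
$$
\lambda\bigl(\cE_{\va,c,C}(d)\cap\fQ\bigr)\ge \varepsilon_0\,\lambda(\fQ)
$$
for every cube $\fQ\subseteq\T_d$.

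Second, I would upgrade this uniform lower density to full Lebesgue measure via the Lebesgue density theorem. Suppose, for contradiction, that the complement $\cA:=\T_d\setminus\cE_{\va,c,C}(d)$ has positive measure. By the Lebesgue density theorem (see~\cite[Corollary~2.14]{Mattila1995}), almost every point of $\cA$ is a density point of $\cA$, so there exists $\vx_0\in\cA$ and a sequence of cubes $\fQ_k$ centred at $\vx_0$ with $\lambda(\fQ_k)\to 0$ such that
$$
\frac{\lambda(\cA\cap\fQ_k)}{\lambda(\fQ_k)}\longrightarrow 1,\qquad\text{equivalently}\qquad \frac{\lambda(\cE_{\va,c,C}(d)\cap\fQ_k)}{\lambda(\fQ_k)}\longrightarrow 0.
$$
This contradicts the uniform bound $\lambda(\cE_{\va,c,C}(d)\cap\fQ_k)\ge \varepsilon_0\lambda(\fQ_k)$, forcing $\lambda(\cA)=0$ and hence $\lambda(\cE_{\va,c,C}(d))=1$.

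As an alternative to the density theorem, one could invoke Lemma~\ref{lem:cass} directly: writing $\cE_{\va,c,C}(d)=\bigcap_{M\ge 1}\bigcup_{N\ge M}\cL_{N,c,C}$ as in the proof of Corollary~\ref{cor:measure}, one covers a given cube by shrinking sub-cubes on each of which $\cL_{N,c,C}$ already occupies a definite proportion, and Cassels' principle transfers the uniform positive proportion on shrinking cubes to a full-measure statement for the limsup set.

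There is essentially no obstacle: the moment bounds are granted by the hypothesis and Corollary~\ref{cor:measure} already does the quantitative work. The only substantive step is the passage from a uniform positive density in every cube to full measure, which is a classical argument.
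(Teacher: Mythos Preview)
Your proof is correct and essentially coincides with the paper's. The paper's argument invokes Corollary~\ref{cor:measure} together with Lemma~\ref{lem:cass} (Cassels) applied to cubes shrinking to a countable dense set, while explicitly noting that the Lebesgue density theorem gives an alternative route; you present the density-theorem route first and Cassels as the alternative, so the two proofs differ only in which of these two equivalent upgrades is emphasized.
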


\begin{proof}
This follows by applying Lemma~\ref{lem:cass} to a sequence of cubes with diameter tending to zero centered at points from a countable dense subset of $\T_d$ and using~\eqref{eq:S24-1} and Corollary~\ref{cor:measure} to verify the conditions of Lemma~\ref{lem:cass} are satisfied.
\end{proof}

We {\it emphasise\/} that the implied constant in~\eqref{eq:S24-1} can only depend on the ambient dimension $d$ and cannot depend on $\fQ$.

A similar argument allows us to deal with monomials.
\begin{lemma}
\label{cor:main1}
Suppose that for each interval $\fI\subseteq \T$ and each integer $N$ which is sufficiently large (in terms of $\fI$) we have 
\begin{equation}
\label{eq:S24-mon}
\int_{\fI}|\sigma_{\va, d}(x; N)|^2dx\gg \lambda(\fI)N, \quad \int_{\fI}|\sigma_{\va, d}(x; N)|^4dx\ll \lambda(\fI)N^2.
\end{equation}
Then  there are positive constants $c$,  $C$ such that $\lambda(\cF_{\va, c, C}(d))=1$.
\end{lemma}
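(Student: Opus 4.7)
The plan is to mirror the proof of Lemma~\ref{cor:main} in the one-dimensional monomial setting. The two-step structure is: (i) first show that for every interval $\fI\subseteq \T$ there is a uniform lower bound
$$
\lambda(\cF_{\va, c, C}(d)\cap \fI)\ge \varepsilon_0\,\lambda(\fI)
$$
with $\varepsilon_0>0$ depending only on $d$; then (ii) invoke the Cassels-type covering argument of Lemma~\ref{lem:cass} applied to a family of intervals whose diameters shrink to zero and whose associated $\limsup$ covers $\T$.

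For step (i), I would apply Lemma~\ref{lem:measure} with $\cX=\fI$ and $f(x)=|\sigma_{\va, d}(x; N)|$. The function $f$ is a trigonometric polynomial, hence continuous, and takes values in $[0,N]$ by the triangle inequality. The hypothesis~\eqref{eq:S24-mon} provides exactly the bounds~\eqref{eq:2} and~\eqref{eq:4} with absolute constants $\alpha_1,\alpha_2$ depending only on $d$, valid for $N$ sufficiently large in terms of $\fI$. Therefore Lemma~\ref{lem:measure} yields
$$
\lambda\bigl(\{x\in \fI:~cN^{1/2}\le |\sigma_{\va, d}(x; N)|\le CN^{1/2}\}\bigr)\ge \varepsilon_0\lambda(\fI),
$$
where $\varepsilon_0 = (\alpha_1-c^2-\alpha_2/C^2)/C^2$. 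Choosing $c$ sufficiently small and $C$ sufficiently large makes $\varepsilon_0>0$. Writing
$$
\cF_{\va, c, C}(d)\cap \fI = \bigcap_{M\ge 1}\bigcup_{N\ge M}\{x\in \fI:~cN^{1/2}\le |\sigma_{\va, d}(x; N)|\le CN^{1/2}\}
$$
and using continuity of Lebesgue measure from above, exactly as in the proof of Corollary~\ref{cor:measure}, one obtains $\lambda(\cF_{\va, c, C}(d)\cap \fI)\ge \varepsilon_0 \lambda(\fI)$.

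For step (ii), I would take a countable dense subset $\{p_j\}_{j\ge 1}$ of $\T$ and, for each $j$ and each $k\ge 1$, form the interval $\fI_{j,k}\subseteq \T$ centred at $p_j$ with radius $1/k$. Setting $\fU_{j,k}=\cF_{\va, c, C}(d)\cap \fI_{j,k}$, step (i) gives $\lambda(\fU_{j,k})\ge \varepsilon_0 \lambda(\fI_{j,k})$. By density, every $x\in \T$ lies in infinitely many $\fI_{j,k}$ (for each $k$ some $\fI_{j_k,k}$ contains $x$), so Lemma~\ref{lem:cass} implies that almost every $x\in \T$ lies in infinitely many $\fU_{j,k}\subseteq \cF_{\va, c, C}(d)$. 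Consequently $\lambda(\cF_{\va, c, C}(d))=1$. There is no substantive obstacle here: every ingredient is already in place and purely measure-theoretic, and the only observation needed is that the reduction from moment bounds to full measure in Lemma~\ref{cor:main} is dimension-agnostic and uses nothing specific to the multivariate Weyl sums $S_{\va, d}$ beyond the $L^2$ and $L^4$ estimates.
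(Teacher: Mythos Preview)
Your proposal is correct and follows exactly the approach the paper intends: the paper states Lemma~\ref{cor:main1} immediately after Lemma~\ref{cor:main} with the remark ``A similar argument allows us to deal with monomials,'' and your two steps---establishing the uniform density bound via Lemma~\ref{lem:measure} (as in Corollary~\ref{cor:measure}) and then applying Lemma~\ref{lem:cass} to intervals centred at a countable dense subset---are precisely the one-dimensional transcription of that argument.
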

 
In order to prove Theorems~\ref{thm:mon sum} and~\ref{thm:gen sum} it is sufficient to establish~\eqref{eq:S24-1} and~\eqref{eq:S24-mon}.  
These results are presented Sections~\ref{sec:sumdiff pow},  \ref{sec:2nd mom} and~\ref{sec:fourth}.

Note that the {\it Rudin conjecture\/}~\cite[Conjecture~3]{CG} asserts that for any $2<r<4$ and any complex sequence $a_n$ we have 
\begin{equation}
\label{eq:Rudin}
\int_{\T}\left |\sum_{n=1}^{N}a_n \e(xn^{2}) \right |^{r}dx\ll \left (\sum_{n=1}^{N}|a_n|^{2}\right )^{r/2}, 
\end{equation}
where the implied constant may depend on $r$. 
Combining~\eqref{eq:Rudin} with Lemma~\ref{lem:measure} and Remark~\ref{rem:Lp} we conclude that the Rudin conjecture implies that 
there are positive constants $c$,  $C$ such that $\lambda(\cF_{\va, c, C}(2))>0$ (under the condition $|a_n|=1$). 
Furthermore, suppose that there is some $r>2$ such that for any interval $\fI\subseteq \T $ and any complex sequence $a_n$ with $|a_n|=1$ we have (the local version of the Rudin conjecture)
$$
\int_{\fI}\left |\sum_{n=1}^{N}a_n \e(xn^{2}) \right |^{r}dx\ll  N^{r/2} \lambda(\fI),
$$
provided that $N$ is sufficiently large in terms of $\fI$, 
then combining with  Lemma~\ref{lem:measure}, Remark~\ref{rem:Lp} and Lemma~\ref{lem:cass} there are  positive constants $c, C$ such that 
$\lambda(\cF_{\va, c, C}(2))=1$.  However, the Rudin conjecture does not answer the Question~\ref{quest:large sigma d} for the case $d=2$.


\subsection{Some tools from harmonic analysis}

We need the following obvious  identity.

 \begin{lemma}
\label{lem:diag-2nu}   Let  $0 < \delta\le 1$, 
  $y_1,\ldots, y_K$ be a sequence of real numbers and $\beta_1,\ldots, \beta_K $ be a sequence of complex numbers. For any integer $\nu \ge 1$, we have 
$$
\int_0^\delta  \left|\sum_{k=1}^{K}\beta_k\e\(zy_k\)\right|^{2\nu} dz = \sfM + \sfE, 
$$
where 
\begin{align*}
\sfM&=\delta  \sum_{\substack{ 1\le k_1, \ldots , k_\nu, \ell_1, \ldots ,  \ell_\nu\le K\\ y_{k_1}+\ldots + y_{k_\nu} = y_{\ell_1} +\ldots + y_{\ell_\nu}}} \beta_{k_1}\ldots \beta_{k_\nu}
 \overline {\beta_{\ell_1}} \ldots  \overline {\beta_{\ell_\nu}}, \\
\sfE&=\sum_{\substack{ 1\le k_1, \ldots , k_\nu, \ell_1, \ldots ,  \ell_\nu\le K\\ y_{k_1}+\ldots + y_{k_\nu} \ne y_{\ell_1} +\ldots + y_{\ell_\nu}}}
 \frac{ \beta_{k_1}\ldots \beta_{k_\nu}
 \overline {\beta_{\ell_1}} \ldots  \overline {\beta_{\ell_\nu}}}{2\pi i \( y_{k_1}+\ldots + y_{k_\nu} - y_{\ell_1} -\ldots - y_{\ell_\nu}\)}\\
& \qquad \qquad \qquad  \qquad \times \(\e\(\delta\( y_{k_1}+\ldots + y_{k_\nu} - y_{\ell_1} -\ldots - y_{\ell_\nu}\)\)-1\).
\end{align*}
\end{lemma}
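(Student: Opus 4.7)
The plan is to expand the $2\nu$-th power as a product of $\nu$ copies of the sum with $\nu$ copies of its complex conjugate, and then integrate term by term, separating out the terms where the frequency vanishes from those where it does not.

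First I would write
\[
\left|\sum_{k=1}^{K}\beta_k\e(zy_k)\right|^{2\nu} = \left(\sum_{k=1}^{K}\beta_k\e(zy_k)\right)^{\nu}\overline{\left(\sum_{k=1}^{K}\beta_k\e(zy_k)\right)^{\nu}},
\]
and expand fully by multilinearity to obtain
\[
\sum_{1\le k_1,\ldots,k_\nu,\ell_1,\ldots,\ell_\nu\le K} \beta_{k_1}\cdots\beta_{k_\nu}\overline{\beta_{\ell_1}}\cdots\overline{\beta_{\ell_\nu}}\,\e\bigl(z(y_{k_1}+\ldots+y_{k_\nu}-y_{\ell_1}-\ldots-y_{\ell_\nu})\bigr).
\]
Setting $Y = y_{k_1}+\ldots+y_{k_\nu}-y_{\ell_1}-\ldots-y_{\ell_\nu}$, the integral over $[0,\delta]$ of each term is handled by the elementary computation
\[
\int_0^\delta \e(zY)\,dz = \begin{cases} \delta, & Y=0,\\ \dfrac{\e(\delta Y)-1}{2\pi i Y}, & Y\ne 0. \end{cases}
\]

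Then I would swap the finite sum with the integral (no convergence issues since everything is finite), and split the sum according to whether $Y=0$ or $Y\ne 0$. The diagonal contribution, from tuples satisfying $y_{k_1}+\ldots+y_{k_\nu}=y_{\ell_1}+\ldots+y_{\ell_\nu}$, yields exactly the main term $\sfM$ after pulling out the factor $\delta$, while the remaining tuples yield precisely the expression $\sfE$ stated in the lemma.

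There is essentially no obstacle here; the only minor point of care is the routine verification that the case $Y=0$ contributes $\delta$ (rather than an indeterminate form), which is immediate. The rest is bookkeeping of indices and signs.
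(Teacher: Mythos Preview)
Your proof is correct and follows exactly the same approach as the paper, which simply says the result ``follows after expanding the square, interchanging summation and evaluating the integral.'' Your write-up just makes these routine steps explicit.
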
 
\begin{proof}
This follows after expanding the square, interchanging summation and evaluating the integral.
\end{proof}

The above result may be applied to obtain an asymptotic formula for various integrals. In some cases it is  technically convenient to work with smooth weights at the cost of establishing only upper and lower bounds. Results of this type are well known and we provide a typical proof.
\begin{lemma}
\label{lem:eqntomv}
Let $I$ be an interval and $\varphi_1,\varphi_2,\ldots \varphi_{k}$ real valued functions on $I$. For any $Y_1,\ldots,Y_k\gg 1$ and sequence of complex numbers $a_n$ satisfying $|a_n|\le 1$ we have 
\begin{align*}
&\frac{1}{Y_1\ldots Y_k}\int_{-Y_1}^{Y_1}\ldots \int_{-Y_k}^{Y_k}\left|\sum_{n\in I} a_n \e\left(\sum_{i=1}^{k}y_i\varphi_i(n) \right)\right|^{4}d y_1\ldots d y_k\\
& \quad \ll \# \left \{ n_1,\ldots,n_4\in I: ~\left|\varphi_i(n_1)+\cdots-\varphi_i(n_4)\right|\le \frac{1}{Y_i}, \   1\le i \le k \right\}.
\end{align*}
\end{lemma}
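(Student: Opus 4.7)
My plan is to apply the standard smoothing technique from harmonic analysis: replace the sharp cut-offs $\mathbf{1}_{[-Y_i, Y_i]}$ by a non-negative majorant whose Fourier transform has compact support. Pick once and for all an even function $\phi\colon \R \to [0, \infty)$ with $\phi(y) \ge 1$ on $[-1, 1]$ whose Fourier transform $\widehat{\phi}$ is itself non-negative and supported in $[-1/2, 1/2]$. An explicit admissible choice is
\begin{equation*}
\phi(y) = \frac{\pi^2}{4}\left(\frac{\sin(\pi y/2)}{\pi y/2}\right)^2,
\end{equation*}
whose Fourier transform is, up to a constant, a triangular bump on $[-1/2, 1/2]$. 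After rescaling one has $\phi(y/Y) \ge \mathbf{1}_{[-Y, Y]}(y)$ for every $Y > 0$.

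Since $|\,\cdot\,|^4 \ge 0$, replacing the indicator of the box $\prod_i [-Y_i, Y_i]$ by the majorant $\prod_i \phi(y_i/Y_i)$ and extending the integral to all of $\R^k$ can only increase the left-hand side. Expanding the fourth power as
\begin{equation*}
\Bigl| \sum_{n \in I} a_n \e\Bigl(\sum_{i=1}^k y_i \varphi_i(n) \Bigr) \Bigr|^4 = \sum_{n_1, \ldots, n_4 \in I} a_{n_1} a_{n_2} \overline{a_{n_3}}\,\overline{a_{n_4}}\, \e\Bigl(\sum_{i=1}^k y_i \Delta_i\Bigr),
\end{equation*}
with
\begin{equation*}
\Delta_i = \varphi_i(n_1) + \varphi_i(n_2) - \varphi_i(n_3) - \varphi_i(n_4),
\end{equation*}
swapping the finite sum with the integral, and applying the identity
\begin{equation*}
\int_{\R} \phi(y/Y)\, \e(y\alpha)\, dy = Y\,\widehat{\phi}(-Y\alpha),
\end{equation*}
reduces the $k$-fold integral to $Y_1 \cdots Y_k \prod_i \widehat{\phi}(-Y_i \Delta_i)$, which cancels the normalising factor $1/(Y_1\cdots Y_k)$ on the left-hand side.

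It remains to bound
\begin{equation*}
\sum_{n_1, \ldots, n_4 \in I} a_{n_1} a_{n_2} \overline{a_{n_3}}\, \overline{a_{n_4}} \prod_{i=1}^{k} \widehat{\phi}(-Y_i \Delta_i).
\end{equation*}
Since $|a_n| \le 1$, and $\widehat{\phi}$ is non-negative, uniformly bounded, and vanishes outside $[-1/2, 1/2]$, each factor $\widehat{\phi}(-Y_i \Delta_i)$ is $O(1)$ and is zero unless $|\Delta_i| \le 1/(2Y_i) \le 1/Y_i$. Consequently the above sum is $\ll$ the number of quadruples $(n_1, \ldots, n_4) \in I^4$ satisfying $|\Delta_i| \le 1/Y_i$ for every $i$, which is exactly the right-hand side of the claim. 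The argument is entirely routine; the only real choice is that of the smooth majorant, for which several classical constructions (Fej\'er, de la Vall\'ee Poussin, or Beurling--Selberg) are available, so there is no serious obstacle to the proof.
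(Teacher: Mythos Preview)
Your proof is correct and follows essentially the same approach as the paper: majorise the sharp cutoff by a non-negative smooth function whose Fourier transform has compact support, expand the fourth power, apply Fourier inversion, and use the support condition to restrict to the desired set of quadruples. The only cosmetic differences are that you give an explicit Fej\'er-type majorant and take the support of $\widehat{\phi}$ in $[-1/2,1/2]$ rather than $[-1,1]$, neither of which affects the argument.
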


\begin{proof}
Let $F$ be a positive smooth function with sufficient decay satisfying  
$$
F(x)\gg 1 \ \ \text{if} \ \  |x|\le 1 \quad \text{and} \quad \text{supp} \, \widehat F\subseteq [-1,1].
$$
where $ \text{supp} \, \widehat F = \{x\in \R:~ \widehat F(x) \ne 0\}$.  
We have  
\begin{align*}
&\int_{-Y_1}^{Y_1}\ldots \int_{-Y_k}^{Y_k}\left|\sum_{n\in I} a_n \e\left(\sum_{i=1}^{k}y_i\varphi_i(n) \right)\right|^{4}d y_1\ldots d y_k \\ &
\qquad  \ll  \int_{-\infty}^{\infty}\ldots \int_{-\infty}^{\infty} \prod_{i=1}^kF(y_i/Y_i) \left|\sum_{n\in I} a_n \e\left(\sum_{i=1}^{k}y_i\varphi_i(n) \right)\right|^{4}d y_1\ldots d y_k. 
\end{align*}
Expanding the fourth power, interchanging summation, recalling the assumption $|a_n|\le1$  and using Fourier inversion gives 
\begin{align*}
&\int_{-Y_1}^{Y_1}\ldots \int_{-Y_k}^{Y_k} \left|\sum_{n\in I} a_n \e\left(\sum_{i=1}^{k}y_i\varphi_i(n) \right)\right|^{4}d y_1\ldots d y_k \\ & \quad \quad \quad \quad \ll Y_1\ldots Y_k\sum_{n_1,\ldots,n_4\in I}\prod_{i=1}^{k}\left|\widehat F\left(Y_i(\varphi_i(n_1)+\ldots-\varphi_i(n_4)) \right)\right|,
\end{align*} 
and the result follows from  $\text{supp}\, \widehat F\subseteq[-1,1]$.  
\end{proof}

 \subsection{Number of representations by sums and differences of powers}
\label{sec:sumdiff pow}
We next collect some   results on the number of representations $R_d(k,N)$  of 
an integer $k$ as
$$
k = n_1^d +  n_2^d - n_3^d  - n_4^d, \qquad 1\le n_1, n_2, n_3, n_4\le N.
$$ 
They are crucial for our bounds on moments of  exponential polynomials.

We first recall a result of Skinner and Wooley~\cite[Theorem~1.2]{SkinWool},
which treats the case of $k=0$ and shows that essentially all solutions are diagonal (that is, 
with $\{n_1, n_2\}= \{n_3,n_4\}$).   

\begin{lemma}
\label{lem:SkinWool}
For  $d \ge 2$ we have 
$$
R_d(0,N) = 2N^2 + O\( N^{3/2 + 1/(d-1) + o(1)}\).
$$
Moreover, when $d = 3$ or $d = 5$, one may replace the term $1/(d -1)$ in each of the above estimates by $1/d$. 
\end{lemma}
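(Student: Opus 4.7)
The plan is to separate $R_d(0,N)$ into contributions from \emph{diagonal} quadruples, where $\{n_1,n_2\}=\{n_3,n_4\}$ as a multiset, and \emph{off-diagonal} quadruples. A direct count shows that the diagonal contribution equals exactly $2N^2-N$: the patterns $(n_1,n_2)=(n_3,n_4)$ and $(n_1,n_2)=(n_4,n_3)$ each give $N^2$ quadruples, with overlap $N$. This produces the main term $2N^2$, and the task is reduced to bounding the number $T_d(N)$ of off-diagonal quadruples by $N^{3/2+1/(d-1)+o(1)}$, or $N^{3/2+1/d+o(1)}$ when $d\in\{3,5\}$.

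For $T_d(N)$ I would rewrite the defining equation as $n_1^d-n_3^d=n_4^d-n_2^d$ and apply the factorisation
$$
a^d-b^d=(a-b)\,\Phi_d(a,b),\qquad \Phi_d(a,b)=\sum_{i=0}^{d-1}a^i b^{d-1-i}.
$$
Setting $h=n_1-n_3$ and $\ell=n_4-n_2$ (both strictly positive after exploiting the symmetries $n_1\leftrightarrow n_3$, $n_4\leftrightarrow n_2$, $(n_1,n_3)\leftrightarrow(n_4,n_2)$), the equation becomes
$$
h\,\Phi_d(n_1,n_3)=\ell\,\Phi_d(n_4,n_2).
$$
After a dyadic localisation of $h$, $\ell$ and of the sizes of the $n_i$, I would pass to the common value $M:=h\,\Phi_d(n_1,n_3)$ and study the representation function
$$
\rho_{d,h}(M)=\#\{n_3\in[1,N]:\,h\,\Phi_d(n_3+h,n_3)=M\},
$$
so that $T_d(N)\ll\sum_{h,\ell}\sum_M \rho_{d,h}(M)\rho_{d,\ell}(M)$. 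For each fixed $h$ the defining equation is a polynomial equation of degree $d-1$ in $n_3$, so $\rho_{d,h}(M)\le d-1$; thus the problem reduces to counting, with multiplicity in $h$ and $\ell$, the number of coincidences between the value sets $V_h=\{h\Phi_d(n_3+h,n_3):1\le n_3\le N-h\}$, each of size $\asymp N$ inside $[1,O(N^d)]$. Combining the pointwise bound with divisor-type estimates for the $M$ admitting a representation, applying Cauchy--Schwarz over $h$, and tracking the degree of the polynomials should yield the exponent $3/2+1/(d-1)$, where the $3/2$ balances the $\asymp N$ choices of $h$ against the $\asymp N$ values of $M$ per fixed $h$, and the $1/(d-1)$ captures the loss when bounding moments of $\rho_{d,h}$.

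The main obstacle, and the one requiring the most care, will be the refined exponent $3/2+1/d$ in the cases $d=3$ and $d=5$. Here the extra saving should come from further algebraic factorisation of $\Phi_d$ over the cyclotomic field $\Q(\zeta_d)$: one expresses the equation $h\Phi_d(n_1,n_3)=M$ in terms of norms from $\Q(\zeta_d)$ and invokes Hooley-type estimates for the number of representations by such norm forms, which are strictly smaller than the generic polynomial-degree bound precisely when $d=3$ or $d=5$ (the degrees for which $\Q(\zeta_d)$ has class number one and a well-understood unit group). A secondary technical point is that boundary and near-degenerate regimes ($h\approx\ell$, very small $h$ or $\ell$, or $n_i$ near the endpoints of $[1,N]$) must be handled separately; these should each contribute at most $O(N^{3/2+o(1)})$ and thus fit within the claimed error.
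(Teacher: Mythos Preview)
The paper does not prove this lemma at all: it is simply quoted as a known result of Skinner and Wooley~\cite[Theorem~1.2]{SkinWool}, with the remark that any of several earlier weaker bounds would also suffice for the paper's purposes. So there is no proof in the paper to compare against beyond the citation.

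Your proposal is a sketch of how one might try to reprove the Skinner--Wooley theorem from scratch, and the opening moves are sound: the diagonal count $2N^2-N$ is correct, and the factorisation $a^d-b^d=(a-b)\Phi_d(a,b)$ together with the parametrisation by $h=n_1-n_3$, $\ell=n_4-n_2$ is indeed the natural starting point (and is essentially how Hooley and then Skinner--Wooley begin). However, the crucial step --- obtaining the exponent $3/2+1/(d-1)$ from ``divisor-type estimates for the $M$ admitting a representation, applying Cauchy--Schwarz over $h$'' --- is asserted rather than carried out, and your heuristic for why exactly $3/2+1/(d-1)$ should emerge (balancing $\asymp N$ choices of $h$ against $\asymp N$ values of $M$, with $1/(d-1)$ as a ``moment loss'') does not correspond to any concrete estimate. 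The actual Skinner--Wooley argument is considerably more delicate: it relies on greatest-prime-factor considerations for the factored expression, a sieve in the style of Hooley, and careful treatment of the resulting congruence conditions; the exponent $1/(d-1)$ arises from the degree of an auxiliary equation in this sieve, not from a generic moment bound. Similarly, the refinement for $d=3,5$ in their work comes from specific arithmetic features of those degrees in the sieve analysis, not from class-number-one properties of $\Q(\zeta_d)$ or norm-form representation counts as you suggest.

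In short: for the purposes of this paper a citation is all that is required, and your outline, while pointing in a reasonable direction, is not close to a self-contained proof of the stated bound.
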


We note that~\cite[Theorem~1.2]{SkinWool} improves a series of previous results with weaker error terms,
each of them would be suitable for our purpose. On the other hand, one can improve~\cite[Theorem~1.2]{SkinWool} 
by using a result of Hooley~\cite[Theorem~3]{Hool2}, which however gives us no advantage: for several even stronger
bounds, see~\cite{Brow1, Brow2, BrH-B, H-B1, H-B2, Morm} and references therein.

For bounding $R_d(k,N)$ with $k\ne 0$ we need the  following result of  Marmon~\cite[Theorem~1.4]{Morm}.

\begin{lemma} 
\label{lem:Mormon}
Let $a_1, a_2, a_3, M$ be non-zero integers. Let $r(M, B)$ count the number of solutions $(x_1, x_2, x_3)\in \Z^{3}$ to the equation
$$
a_1x_1^{d}+a_2x^{d}+a_3x_3^{d}=M
$$
satisfying $|x_i|\le B$ and $a_ix_i^{d}\neq M$ for $i=1, 2, 3$. Then 
$$
r(M, B)=O(B^{2/d^{1/2}+o(1)}).
$$
\end{lemma}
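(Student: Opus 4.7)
The plan is to approach this via the $p$-adic determinant method of Heath-Brown as refined for diagonal forms. The target $B^{2/\sqrt{d}+o(1)}$ is characteristic of that method balanced against the number of degrees of freedom (here, essentially one, after projecting the surface onto a curve).

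First, I would dyadically decompose according to the sizes $|x_i| \asymp X_i \le B$, reducing to bounding the solution count in each dyadic box. The condition $a_i x_i^d \neq M$ is used to exclude the degenerate case where two of the $x_j$ vanish or satisfy a trivial identity, which is essential: the full count on a diagonal surface can be dominated by such ``obvious'' lines, and their contribution is subtracted off.

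The core step is to apply the determinant method. Choose a parameter $D$ and an auxiliary prime $p$ of size roughly $B^{\eta}$ for a well-chosen $\eta$. For each $\mathbb{F}_p$-point on the mod-$p$ reduction of the surface $a_1x_1^d+a_2x_2^d+a_3x_3^d=M$, the integer solutions lifting that point lie on a single auxiliary hypersurface $G=0$ of degree $\le D$ not divisible by the defining polynomial of the surface; this is the standard output of the determinant lemma after estimating a Vandermonde-type determinant on a suitable basis of monomials. The number of $\mathbb{F}_p$-points on the surface is $O(p^2)$ by Lang--Weil/Deligne. Intersecting $G=0$ with the diagonal surface yields an affine curve of degree $O(dD)$. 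On each irreducible component of this curve, one invokes a Bombieri--Pila type integer point bound, giving $O(B^{1/D+o(1)})$ points; components contained in the locus $a_ix_i^d = M$ for some $i$ are excluded by the hypothesis. Summing over the $O(p^2)$ residue classes gives roughly $p^2 B^{1/D+o(1)}$ points, and optimising the choice $D \asymp \sqrt{d}$ (with $p$ chosen accordingly) produces the exponent $2/\sqrt{d}$.

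The main obstacle is controlling the geometry of the intersection curves, which is where the ``diagonal'' structure really matters. One needs to show that, outside the excluded loci $a_ix_i^d = M$, a generic auxiliary $G$ cuts the diagonal surface in an irreducible (or at least low-degree) curve whose integer points are amenable to the Bombieri--Pila bound, and that only few auxiliary $G$ share a common component with the surface. This requires a careful analysis of the exceptional set — using Bézout together with the fact that diagonal hypersurfaces of degree $d \ge 3$ over $\overline{\mathbb{Q}}$ contain only finitely many lines through a generic point — and is the technical heart of the argument. A secondary subtlety is uniformity in $M$ and the $a_i$: one must track the dependence on these constants through all bounds, which is delicate since $M$ enters both the singular locus analysis and the mod-$p$ count.
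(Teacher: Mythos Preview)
The paper does not prove this lemma at all: it is quoted verbatim as~\cite[Theorem~1.4]{Morm} (Marmon), with no argument supplied. So there is no ``paper's own proof'' to compare against --- the intended proof is Marmon's, and the lemma functions here purely as a black box feeding into Lemma~\ref{lem:Morm}.

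That said, your plan is broadly aligned with what Marmon actually does. His argument is the determinant method applied to the affine diagonal surface, and the exclusion $a_ix_i^d\neq M$ is indeed what rules out the degenerate loci (coordinate planes meeting the surface in curves with too many integer points). Two points of divergence are worth noting. First, Marmon uses Salberger's \emph{global} determinant method rather than the single-prime $p$-adic version you sketch; this matters for the quality of the exponent, since the global method eliminates a loss that the naive mod-$p$ counting incurs. Second, your bookkeeping ``$p^2$ classes times $B^{1/D}$ per curve, then set $D\asymp\sqrt d$'' is too coarse to actually land on $2/\sqrt d$: the genuine optimisation runs through a Hilbert-function count of monomials of degree $\le D$ on a degree-$d$ surface, and the balance between that count, the size of the auxiliary modulus, and the curve bound is more delicate than your one-line heuristic suggests. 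Your remark about lines on diagonal hypersurfaces is also slightly beside the point --- what is really needed is control over \emph{all} low-degree curves on the surface, not just lines, and this is where the diagonal structure and the affine condition $M\neq 0$ do real work.

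None of this is a fatal gap: your outline is a recognisable caricature of the correct proof. But as written it is a plan rather than a proof, and the places you flag as ``the technical heart'' are exactly where the substance lies. If the goal is to reproduce Marmon's theorem you would need to carry these out; if the goal is to use it, citing~\cite{Morm} as the paper does is the honest route.
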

For $R_d(k, N)$ with $k\neq 0$ using Lemma~\ref{lem:Mormon} we obtain the following.

\begin{lemma}
\label{lem:Morm}
For  $d \ge 2$ and $k \ne 0$ we have 
$$
R_d(k,N)\le N^{1+2/d^{1/2}+o(1)} .  
$$
\end{lemma}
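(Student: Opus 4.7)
The natural approach is to reduce this to Marmon's bound (Lemma~\ref{lem:Mormon}) by fixing one of the four variables. Writing $k = n_1^d + n_2^d - n_3^d - n_4^d$, I would fix $n_4 \in [1,N]$, set $M = k + n_4^d$, and bound the number of triples $(n_1, n_2, n_3) \in [1,N]^3$ satisfying $n_1^d + n_2^d - n_3^d = M$ before summing over $n_4$.

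The generic contribution is handled by applying Lemma~\ref{lem:Mormon} with $(a_1, a_2, a_3) = (1, 1, -1)$ and $B = N$, which yields $O(N^{2/d^{1/2} + o(1)})$ triples for each $M \neq 0$ satisfying the non-degeneracy conditions $a_i n_i^d \neq M$. Summing over the $N$ choices of $n_4$ produces the main term $N^{1 + 2/d^{1/2} + o(1)}$.

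The remaining work is to absorb the two ``degenerate'' contributions that Lemma~\ref{lem:Mormon} explicitly excludes. First, the case $M = 0$ occurs for at most one $n_4$ (the unique positive $d$-th root of $-k$, if it exists at all), and for that value of $n_4$ we count positive solutions to $n_1^d + n_2^d = n_3^d$: Fermat's last theorem rules this out for $d \ge 3$, while for $d = 2$ the standard Pythagorean count gives $O(N^{1+o(1)})$. Second, when $M \neq 0$ but some $a_i n_i^d$ equals $M$, I would argue as follows. The case $-n_3^d = M$ forces $n_1^d + n_2^d = 0$ and contributes nothing. The case $n_1^d = M$ determines $n_1$ from $n_4$ via the relation $n_1^d - n_4^d = k$, after which the equation collapses to $n_2 = n_3$, giving at most $N$ triples per valid pair $(n_1, n_4)$. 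Pairs solving $n_1^d - n_4^d = k$ are uniformly sparse --- bounded by the divisor function for $d = 2$ and by the elementary estimate $n_4^{d-1} \ll |k|$ (or Thue's theorem) for $d \ge 3$ --- so this case contributes only $N^{1+o(1)}$. The symmetric case $n_2^d = M$ is identical.

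The main obstacle, as far as I can see, is simply remaining vigilant about the degenerate solutions Lemma~\ref{lem:Mormon} excludes: for $d \ge 5$ the target exponent $1 + 2/d^{1/2}$ is strictly below $2$, so a careless bookkeeping that inadvertently admits an $N^2$ term would be fatal. The verification above shows these exclusions are harmless, but it depends crucially on the observation that each degenerate equality $a_i n_i^d = M$ forces a diagonal constraint on the remaining variables, which is what keeps the exceptional contribution down to $N^{1+o(1)}$.
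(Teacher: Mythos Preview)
Your approach is essentially identical to the paper's: fix $n_4$, apply Marmon's bound (Lemma~\ref{lem:Mormon}) to the remaining three variables, and handle the excluded degenerate cases separately. In fact you are more careful than the paper in two places: you explicitly treat $M=0$ (which the paper tacitly ignores), and you observe that the case $-n_3^d=M$ forces $n_1^d+n_2^d=0$ and hence contributes nothing, whereas the paper bounds it more crudely.

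The one weak point is your count of pairs $(n_1,n_4)$ with $n_1^d-n_4^d=k$ for $d\ge 3$. The estimate $n_4^{d-1}\ll|k|$ yields only $O(|k|^{1/(d-1)})$ pairs, so the degenerate contribution is $O(N|k|^{1/(d-1)})$. This is indeed $N^{1+o(1)}$ for each \emph{fixed} $k$, matching the lemma as literally stated; but the application in Lemma~\ref{lem:aver 4 d>4} sums over $|k|\le 4N^d$ and needs uniformity, where your bound degrades to $N^{1+d/(d-1)}$, which exceeds the main term $N^{1+2/\sqrt d}$ once $d\ge 5$. The paper instead factors $k=(n_1-n_4)(n_1^{d-1}+\cdots+n_4^{d-1})$ and invokes the divisor bound $\tau(k)=k^{o(1)}$ to get $|k|^{o(1)}=N^{o(1)}$ pairs for every $d\ge 2$, which recovers uniformity at no extra cost. (A uniform Thue-type bound would also work but is heavier machinery than needed.)
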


\begin{proof}  
We see that by Lemma~\ref{lem:Mormon} for any fixed $n_4$ there are at most 
$N^{2/d^{1/2}+o(1)}$ solutions to $ n_1^d +  n_2^d - n_3^d  = n_4^d+k$, 
$n_1, n_2, n_3 \le N$ unless 
\begin{equation}
\label{eq:n2n3}
n_1^d =  n_4^d+k, \qquad n_2 = n_3,
\end{equation}
 or 
\begin{equation}
\label{eq:n1n3}
n_2^d =  n_4^d+k, \qquad n_1 = n_3,
\end{equation} 
or
\begin{equation} 
\label{eq:k}
-n_3^d=n_4^d+k.
\end{equation} 
Thus the total contribution from such solutions (avoiding~\eqref{eq:n2n3}, \eqref{eq:n1n3}  and~\eqref{eq:k}) is at most $N^{1+2/d^{1/2}+o(1)}$.

Otherwise, using   the classical bound 
\begin{equation}
\label{eq:tau}
\tau(k) = k^{o(1)}, 
\end{equation} 
on the divisor function,  see~\cite[Equation~(1.81)]{IwKow}, we see that there are $k^{o(1)}$ pairs $(m,n)$ with 
$m^d =  n^d+k$ (which we write as  
$$
k= (m-n) (m^{d-1} + \ldots + n^{d-1}).
$$
Therefore, 
 the total contribution from the solution~\eqref{eq:n2n3} and~\eqref{eq:n1n3} is at most $N^{1+o(1)}$. Clearly there are at most $O(1)$ solutions to the equation~\eqref{eq:k} which leaves $O(N)$ solutions in remaining variables $n_1,n_2$.  Putting all this together we obtain the desired bound. 
\end{proof}

Lemma~\ref{lem:Morm} gives a satisfactory bound when $d \ge 5$. Unfortunately we do not have a good 
bound for $d\le 4$. However the classical argument of Hooley~\cite{Hool1} gives a suitable bound
for $d=3$.
 
 \begin{lemma}
\label{lem:Hooley}
For  $k \ne 0$ we have 
$$
R_3(k,N)\le N^{11/6+o(1)}.
$$
\end{lemma}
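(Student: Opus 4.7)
The plan is to follow the classical argument of Hooley~\cite{Hool1}, which uses the algebraic factorization of differences of cubes to reduce the count to lattice points on a family of conics. Setting $a = n_1 - n_3$, $c = n_2 - n_4$, $u = n_1 + n_3$, $v = n_2 + n_4$ and using the identity
\[
m^3-n^3 = \tfrac{1}{4}\bigl((m-n)^3 + 3(m-n)(m+n)^2\bigr),
\]
the equation $n_1^3+n_2^3-n_3^3-n_4^3 = k$ rewrites as
\[
3au^2 + 3cv^2 = 4k - a^3 - c^3.
\]

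For each $(a,c)$ with $ac \ne 0$ this is a conic in $(u,v)$: an ellipse when $ac>0$, forcing $a^3+c^3 \le 4k$ and hence $|a|,|c| \ll |k|^{1/3}$, and an indefinite Pell-type form when $ac<0$. The degenerate cases $a=0$ and $c=0$ correspond to $n_1=n_3$ or $n_2=n_4$; either reduces to the equation $n_i^3-n_j^3=k$ which, via the factorization $n_i^3-n_j^3=(n_i-n_j)(n_i^2+n_in_j+n_j^2)$, has at most $\tau(k) = k^{o(1)} = N^{o(1)}$ solutions, contributing $O(N^{1+o(1)})$ to $R_3(k,N)$. For each nondegenerate pair $(a,c)$, the conic carries at most $N^{o(1)}$ lattice points with $|u|,|v| \le 2N$ (by the bound on representations by a binary quadratic form in the ellipse case and a Pell-type bound in the hyperbolic case), so a direct summation over $O(N^2)$ choices of $(a,c)$ already gives the weaker bound $R_3(k,N) \ll N^{2+o(1)}$.

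The main obstacle, and the heart of Hooley's argument, is saving the additional factor $N^{1/6}$. The improvement rests on two sources of extra cancellation beyond the trivial per-conic bound. First, for most pairs $(a,c)$ the conic $3au^2+3cv^2=4k-a^3-c^3$ carries no integer point whatsoever, since solvability over $\Z$ imposes local conditions tied to the prime factorization of $4k-a^3-c^3$. Second, divisor-type estimates for the cubic polynomial sequence $4k-a^3-c^3$ (Hooley's $\Delta$-method for sums of $\tau$ along polynomial sequences) control how often the per-conic count is close to saturated. Combining these inputs with the factorization structure from the Eisenstein integers $\Z[\omega]$ that underlies $a^3+c^3=(a+c)(a^2-ac+c^2)$, one arrives at the desired bound $R_3(k,N) \le N^{11/6+o(1)}$; executing this refinement rigorously is the technically most demanding step.
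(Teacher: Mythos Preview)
Your conic reduction via $a=n_1-n_3$, $c=n_2-n_4$, $u=n_1+n_3$, $v=n_2+n_4$ is a valid reformulation, and the treatment of the degenerate cases $ac=0$ together with the crude bound $R_3(k,N)\ll N^{2+o(1)}$ are correct (though the per-conic bound of $N^{o(1)}$ in the indefinite case needs care: the number of integer points on $3au^2+3cv^2=D$ in a box of side $2N$ depends on the regulator of the associated Pell equation and is not uniform in $(a,c)$).

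The genuine gap is the final paragraph. The entire content of the lemma beyond the trivial $N^{2+o(1)}$ bound is the saving of $N^{1/6}$, and you only describe heuristically why such a saving should occur---local solvability constraints on the conics, divisor-type estimates along $4k-a^3-c^3$, Eisenstein structure---without carrying any of it out. You yourself write that ``executing this refinement rigorously is the technically most demanding step,'' which is to say the proof is not done. The paper's own argument is also a citation of Hooley, but it is a clean reduction rather than a sketch: it observes that Hooley's published argument treats $k=n_1^3+n_2^3+n_3^3+n_4^3$ with $n_i\le k^{1/3}$, identifies the two places where our setting differs (the range $n_i\le N$ replacing $k^{1/3}$, and the mixed signs breaking the symmetry Hooley uses to pair cubes of the same parity), repairs the second via the doubling $8k=(2n_1)^3+(2n_2)^3-m_3^3-m_4^3$, and then invokes Hooley verbatim. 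Your proposal neither addresses the parity issue nor reduces to a citable statement; to complete it you would have to reproduce the substantive part of Hooley's sieve, which you have not attempted.
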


\begin{proof} We recall that  Hooley~\cite{Hool1} 
considers the equation $k = n_1^3 +  n_2^3 + n_3^3  + n_4^3$ 
with unrestricted positive integers $n_1, n_2, n_3, n_4$ from which of course 
follows that $n_1, n_2, n_3, n_4\le k^{1/3}$.  Thus, in our case $N$ replaces $k^{1/3}$
in the  argument of~\cite{Hool1}.

It is also important for~\cite{Hool1} that  the equation is fully symmetric and one can form a sum
$n_i^3 + n_j^3$ of two cubes of the same parity. 
Our equation $k = n_1^3 +  n_2^3 - n_3^3  - n_4^3$ lacks this symmetry, 
however we can instead consider the equation 
$$
8k = (2n_1)^3 + (2n_2)^3 - m_3^3 - m_4^4
$$
which has at least as many solutions, and after denoting $m_1=2 n_1$ and $m_2 = 2n_2$ we
regain the desired parity condition. 

One can verify that beyond these two points everything goes exactly as in~\cite{Hool1} 
and the sign changes do not affect the rest. Taking into account the range of variables $n_1,\ldots,n_4\le N$  we obtain the desired bound.  
\end{proof}

\section{Restriction bounds for moments of exponential sums}

\subsection{Second moments over small intervals and boxes}
\label{sec:2nd mom}

We now show that applying Lemma~\ref{lem:diag-2nu} to monomials of degree $d\ge 2$ gives an  
asymptotic formula for  integrals which are more general than  $I_{1,d}(\fI) $.

\begin{lemma}
\label{lem:aver f}
Let  $f$ be a real, continuously differentiable function such that 
$$
f'(t)=t^{\gamma-1+o(1)},  
$$ 
for some $\gamma>1$. Then  for any sequence of complex numbers  $\va=(a_n)_{n=1}^\infty$ with  $|a_n|=1$ and  any interval $\fI\subseteq   \R$ we have 
$$
\int_{\fI} \left|\sum_{n=N}^{2N}a_n\e\(xf(n)\)\right|^2d x  =  \lambda\(\fI\) N+O\(N^{2-\gamma+o(1)}\),
$$
where the implied constant depends on $f$.
\end{lemma}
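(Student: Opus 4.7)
The plan is to expand the square, interchange summation and integration, and isolate the diagonal contribution as the main term. The off-diagonal terms will be controlled by explicitly integrating the complex exponential and using a mean value theorem estimate provided by the hypothesis on $f'$.

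First, I would write
$$
\int_{\fI} \left|\sum_{n=N}^{2N}a_n\e\(xf(n)\)\right|^2 dx = \sum_{n,m=N}^{2N} a_n \overline{a_m} \int_{\fI} \e\(x(f(n)-f(m))\)\, dx.
$$
Since $|a_n|=1$, the diagonal terms $n=m$ contribute exactly $(N+1)\lambda(\fI)$, which is $\lambda(\fI)N + O(1)$; this produces the stated main term. For the off-diagonal, writing $\fI = [\alpha,\beta]$, the explicit integration gives
$$
\left|\int_{\fI} \e\(x(f(n)-f(m))\)\, dx\right| = \left|\frac{\e(\beta(f(n)-f(m))) - \e(\alpha(f(n)-f(m)))}{2\pi i(f(n)-f(m))}\right| \le \frac{1}{\pi|f(n)-f(m)|}.
$$

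Next, I would use the hypothesis $f'(t)=t^{\gamma-1+o(1)}$: for any $\eps>0$, once $t$ is sufficiently large we have $t^{\gamma-1-\eps}\le |f'(t)|\le t^{\gamma-1+\eps}$, and $f'$ is of constant sign. Hence for $n,m\in[N,2N]$ with $n\neq m$, the mean value theorem gives
$$
|f(n)-f(m)| \asymp N^{\gamma-1+o(1)} |n-m|
$$
once $N$ is large. Plugging this into the off-diagonal estimate yields
$$
\sum_{\substack{n,m\in [N,2N]\\ n\neq m}} \frac{1}{|f(n)-f(m)|} \ll N^{1-\gamma+o(1)}\sum_{\substack{n,m\in [N,2N]\\ n\neq m}} \frac{1}{|n-m|} \ll N^{2-\gamma+o(1)},
$$
since the last inner sum is $\ll N\log N = N^{1+o(1)}$.

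Combining the diagonal main term and the off-diagonal bound yields the stated asymptotic (the constant $O(1)$ from the diagonal is absorbed into the implied constant, which by the conventions of the paper may depend on $\fI$ and $f$). The only real technical point is making the mean value theorem estimate uniform on $[N,2N]$; this is immediate from the defining property of the $o(1)$ notation applied to $f'$. No deeper obstacle arises, as the argument is essentially a routine large-sieve-style diagonal/off-diagonal decomposition, with the growth hypothesis on $f'$ entering only through a single application of the mean value theorem.
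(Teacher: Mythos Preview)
Your proof is correct and follows essentially the same approach as the paper: expand the square, isolate the diagonal as the main term, integrate the off-diagonal explicitly, and bound $|f(n)-f(m)|$ from below via the mean value theorem and the hypothesis on $f'$. The paper first shifts the interval to $[0,\delta]$ by absorbing $\e(\alpha f(n))$ into the weights and invokes its Lemma~\ref{lem:diag-2nu} rather than integrating directly, but these are cosmetic differences.
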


\begin{proof} Suppose $\fI=[\alpha,\alpha+\delta]$.  
By  changing the coefficients
$a_n \rightarrow a_n \e(\alpha n^d)$ we may assume $\alpha = 0$.

 Using the assumption each $|a_n|=1$,  Lemma~\ref{lem:diag-2nu} implies  
\begin{align*}
\int_{\fI} &\left|\sum_{n=N}^{2N}a_n\e\(xf(n)\)\right|^2d x  \\
&\quad = \delta N +\sum_{\substack{N\le n_1,n_2\le 2N \\ n_1\neq n_2}}\frac{a_{n_1}\overline a_{n_2}\left(\e\(\delta\(f(n_1)-f(n_2)\)\)-1\right)}{2\pi i \(f(n_1)-f(n_2)\)} \\
&\quad = \delta N +O\(\sum_{N\le n_2<n_1\le 2N}\frac{1}{f(n_1)-f(n_2)}\)
\end{align*}  
(clearly we can assume that $N$ is large enough so $f(t)$ is monotonically increasing  for $n\ge N$). 
For any $N\le n_2<n_1\le 2N$, by the mean value theorem we have 
 $$
 f(n_1)-f(n_2)= (n_1-n_2) f'(\eta) \quad \text{for some $n_2\le \eta \le n_1$}.
 $$
Hence by assumption on $f'$
$$
 f(n_1)-f(n_2)\ge (n_1-n_2)N^{\gamma-1+o(1)}.
$$ 
Therefore, 
$$
\sum_{N\le n_2<n_1\le 2N}\frac{1}{f(n_1)-f(n_2)}\le N^{2-\gamma+o(1)},
$$
and the desired result follows.
\end{proof}

From Lemma~\ref{lem:aver f}, we immediately obtain an asymptotic formula for $I_{1,d}(\fI)$. Since
$$
 S_{\va, d}(\vx; N)=  \sigma_{\vb, d}(x_d; N), 
$$
 where 
 $$
 b_n = a_n \e\(x_1 n+\ldots +x_{d-1} n^{d-1} \),
$$
we may combine  Lemma~\ref{lem:aver f} with Fubini's theorem after covering the interval $[1,N]$ by $O(\log N)$ dyadic intervals to   give an asymptotic formula for $J_{1, d}(\fQ)$.  For applications to the results from Section~\ref{sec:Lebesgue} it is more straightforward to use a variant of Lemma~\ref{lem:aver f} with summation over intervals of the form $[1,N]$, however Lemma~\ref{lem:aver f}  is also used in the results from Section~\ref{sec:hausdroff} which require considering summation over a dyadic interval.

\begin{cor}
\label{cor:I1J1}
Let  $d \ge 2$ and let $\va=(a_n)_{n=1}^\infty$ be a sequence of complex numbers  satisfying  $|a_n|=1$. For any   interval $\fI \subseteq \TTT$ and any cube $\fQ\subseteq   \Tor$, provided $N$ is large enough in terms of $\fI$ and $\fQ$, we have  
$$
I_{1,d}(\fI) = \lambda\(\fI\) N +O\(N^{o(1)}\)  \quad \text{and} \quad  J_{1,d}(\fQ) = \lambda\(\fQ\) N +O\(N^{o(1)}\),
$$
where the implied constants
depend  only on $d$  and do not depend on $\fI$ and $\fQ$.
\end{cor}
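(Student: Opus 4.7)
The plan is to apply Lemma~\ref{lem:diag-2nu} with $\nu=1$ directly to both integrals, bypassing the dyadic decomposition suggested just before the statement, and to control the resulting off-diagonal contribution by elementary estimates on $|k^d-\ell^d|$.

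For $I_{1,d}(\fI)$ with $\fI=[\alpha,\alpha+\delta]$, the change of variable $x=\alpha+z$ absorbs the phase $\e(\alpha n^d)$ into a new weight $b_n=a_n\e(\alpha n^d)$ of unit modulus, so it suffices to treat $\fI=[0,\delta]$. Applying Lemma~\ref{lem:diag-2nu} with $K=N$, $y_k=k^d$, and $\beta_k=b_k$ yields the main term $\delta N$, since the diagonal $y_k=y_\ell$ reduces to $k=\ell$ (the map $k\mapsto k^d$ is injective on positive integers), plus the off-diagonal
$$
\sfE=\sum_{\substack{1\le k,\ell\le N\\ k\ne \ell}}\frac{b_k\overline{b_\ell}\bigl(\e(\delta(k^d-\ell^d))-1\bigr)}{2\pi i (k^d-\ell^d)}.
$$
Using $|\e(\cdot)-1|\le 2$ together with $|k^d-\ell^d|\ge d\min(k,\ell)^{d-1}|k-\ell|$ (from the factorisation $k^d-\ell^d=(k-\ell)(k^{d-1}+\cdots+\ell^{d-1})$) gives
$$
|\sfE|\ll \sum_{1\le \ell<k\le N}\frac{1}{\ell^{d-1}(k-\ell)},
$$
which is $O((\log N)^2)$ for $d=2$ and $O(\log N)$ for $d\ge 3$; in either case this is $N^{o(1)}$ with implied constant depending only on $d$ and independent of $\delta$.

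For $J_{1,d}(\fQ)$ with $\fQ=\fI_1\times\cdots\times \fI_d$, after an analogous shift in each coordinate I would expand the square and integrate variable by variable, obtaining
$$
J_{1,d}(\fQ)=\sum_{1\le n_1,n_2\le N}b_{n_1}\overline{b_{n_2}}\prod_{j=1}^{d}\int_{[0,\delta_j]}\e\bigl(z_j(n_1^j-n_2^j)\bigr)dz_j,
$$
where $|b_n|=1$ and $\delta_j=\lambda(\fI_j)$. The diagonal $n_1=n_2$ contributes exactly $\lambda(\fQ)N$. For $n_1\ne n_2$, I would bound the factors $1\le j\le d-1$ trivially by $\delta_j\le 1$, and use the fact that $n_1\ne n_2$ forces $n_1^d\ne n_2^d$ to bound the $j=d$ factor by $(\pi|n_1^d-n_2^d|)^{-1}$. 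The off-diagonal is then controlled by exactly the same sum as in the monomial case, giving an $N^{o(1)}$ error. The only subtlety — and really the only nontrivial point — is the uniformity of this error in $\fI$ and $\fQ$; this is automatic because the bounds $|\e(\cdot)-1|\le 2$ and $\delta_j\le 1$ discard all dependence on the interval lengths, and the hypothesis that $N$ be large in terms of $\fI$ and $\fQ$ is used only to guarantee that the main term $\lambda(\fI)N$ (respectively $\lambda(\fQ)N$) exceeds the $N^{o(1)}$ error, which happens once $N$ exceeds a threshold depending on $\lambda(\fI)^{-1}$ (respectively $\lambda(\fQ)^{-1}$).
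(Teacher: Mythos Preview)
Your proof is correct and follows essentially the same route as the paper. For $I_{1,d}(\fI)$ you are carrying out exactly the proof of Lemma~\ref{lem:aver f} with $f(t)=t^d$ over the full range $[1,N]$ (which the paper itself remarks is the more straightforward variant), and for $J_{1,d}(\fQ)$ your direct expansion and trivial bounding of the $j\le d-1$ factors is the same computation as the paper's Fubini reduction to the monomial case via $S_{\va,d}(\vx;N)=\sigma_{\vb,d}(x_d;N)$, just organised slightly differently.
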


\subsection{Fourth moments over small intervals and boxes}
\label{sec:fourth}

We now apply Lemma~\ref{lem:diag-2nu} with $\nu = 2$ to monomials of degree $d\ge 5$, to  obtain the following 
asymptotic formula for a generalisation of the  integral $I_{2,d}(\fI)$.

\begin{lemma}
\label{lem:aver 4 d>4}
Let    $\va=(a_n)_{n=1}^\infty$ be a sequence of complex numbers satisfying  $|a_n|=1$.  If $d=3$ or $d \ge 5$, then  for any interval $\fI\subseteq \TTT$ we have 
$$
\int_{\fI} \left|\sum_{n=1}^{N}a_n\e\(xn^d\)\right|^4 d x=2\lambda(\fI) N^2  +O\(N^{2-\eta_d}\), 
$$
where $\eta_d >0$ depends only on $d$ and  the implied constant may depend on $\fI$.
\end{lemma}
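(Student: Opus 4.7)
My plan is to apply Lemma~\ref{lem:diag-2nu} with $\nu=2$, $K=N$, $y_k = k^d$, $\beta_k = a_k$, and integration length $\delta = \lambda(\fI)$. First I would reduce to the case $\fI = [0,\delta]$ by translating the variable of integration and absorbing the resulting phase $\e(-x_0 n^d)$ into the weights $a_n$, which remain unimodular. The lemma then decomposes the integral as $\sfM + \sfE$.

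For the main term
$$
\sfM = \delta\sum_{\substack{1\le n_1,n_2,n_3,n_4\le N\\ n_1^d+n_2^d=n_3^d+n_4^d}} a_{n_1}a_{n_2}\overline{a_{n_3}}\overline{a_{n_4}},
$$
I would separate the solutions into \emph{diagonal} ones (with $\{n_1,n_2\}=\{n_3,n_4\}$ as multisets) and the rest. Since $|a_n|=1$, every diagonal solution contributes exactly $1$; a direct count gives $2N^2 - N$ such solutions and hence the expected contribution $2\lambda(\fI)N^2 + O(N)$. The non-diagonal solutions are counted by Lemma~\ref{lem:SkinWool}: their number is at most $N^{3/2+1/(d-1)+o(1)}$ in general, improving to $N^{3/2+1/d+o(1)}$ when $d\in\{3,5\}$, which in either case is $O(N^{2-\eta'_d})$ for some $\eta'_d>0$.

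For the error term, the inequality $|\e(\delta k)-1|\le \min(2,2\pi\delta|k|)$ yields
$$
|\sfE| \ll \sum_{k\ne 0} R_d(k,N)\min(\delta, 1/|k|).
$$
Splitting the sum at $|k| = 1/\delta$ and extracting $M_d(N) := \sup_{k\ne 0} R_d(k,N)$ telescopes this to $|\sfE|\ll M_d(N)\log N$. I would then invoke Lemma~\ref{lem:Hooley} to obtain $M_3(N)\le N^{11/6+o(1)}$, and Lemma~\ref{lem:Morm} to obtain $M_d(N)\le N^{1+2/\sqrt{d}+o(1)}$ for $d\ge 5$. Combined with the main-term analysis, this yields the claimed asymptotic with $\eta_3 = 1/6 - o(1)$ and $\eta_d = 1-2/\sqrt{d}-o(1)>0$ for $d\ge 5$.

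The single genuine obstacle is the excluded case $d=4$: there Lemma~\ref{lem:Morm} only delivers $M_4(N)\le N^{2+o(1)}$, which is useless for bounding $\sfE$. Removing this restriction would require essentially new arithmetic input on $R_4(k,N)$ for $k\ne 0$. Apart from this point, the argument is a routine combination of Lemma~\ref{lem:diag-2nu} with the representation-count bounds recalled in Section~\ref{sec:sumdiff pow}.
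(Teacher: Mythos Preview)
Your proposal is correct and follows essentially the same route as the paper's proof: reduce to $\fI=[0,\delta]$, apply Lemma~\ref{lem:diag-2nu} with $\nu=2$, handle the diagonal and off-diagonal parts of $\sfM$ via Lemma~\ref{lem:SkinWool}, and bound $\sfE$ by $\sum_{k\ne 0}R_d(k,N)/|k|\ll M_d(N)\log N$ using Lemmas~\ref{lem:Hooley} and~\ref{lem:Morm}. One minor caveat: for large $d$ the bottleneck is actually the off-diagonal part of $\sfM$ (exponent $3/2+1/(d-1)$) rather than $\sfE$ (exponent $1+2/\sqrt d$), so your stated value of $\eta_d$ is not always the sharp one --- but since the lemma only asserts the existence of some $\eta_d>0$, this does not affect the argument.
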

 
\begin{proof} 
As in the proof of  Lemma~\ref{lem:aver f} we may suppose that $\fI=[0,\delta]$ for some $\delta \in (0,1)$.  
Using the assumption each $|a_n|=1$,  Lemma~\ref{lem:diag-2nu} implies 
\begin{equation}
\label{eq:I4 ME}
\int_{\fI} \left|\sum_{n=1}^{N}a_n\e\(x n^d\)\right|^4d x =  \sfM + \sfE, 
 \end{equation}
where 
\begin{align*}
\sfM&= \delta \sum_{\substack{  n_1, n_2, n_3, n_4\le N  \\ n_1^d +  n_2^d = n_3^d +  n_4^d}}a_{n_1}a_{n_2}  \overline {a_{n_3}} \,  \overline{a_{n_4}}, \\
\sfE& =\sum_{\substack{  n_1, n_2, n_3, n_4\le N  \\ n_1^d +  n_2^d \ne n_3^d +  n_4^d}} \frac{a_{n_1}a_{n_2}  \overline {a_{n_3}} \, \overline{a_{n_4}}\left(\e\(\delta\(n_1^d +  n_2^d - n_3^d  - n_4^d\)\)-1\right)}{2\pi i (n_1^d +  n_2^d - n_3^d  - n_4^d)}.
\end{align*}

Separating the contribution $2N^2 + O(N)$ from diagonal terms with $\{n_1, n_2\} = \{n_3,n_4\}$, thus $a_{n_1}a_{n_2}  \overline {a_{n_3}}\overline {a_{n_4}}=1$, we obtain
$$
\sfM = 2 \delta  N^2 + O\(N+T\), 
$$
where $T$ is number of solutions to the equation $n_1^d +  n_2^d = n_3^d +  n_4^d$, with  $n_1, n_2, n_3, n_4\le N $ and $\{n_1, n_2\} \ne \{n_3,n_4\}$. 
By Lemma~\ref{lem:SkinWool} (noting the comment about $d=3$), for each $d\ge 3$ there exists some $\zeta_d>0$ such that
$$
T \le N^{2-\zeta_d}
$$
which implies
\begin{equation}
\label{eq:asymp M}
\sfM = 2 \delta  N^2 + O\(N^{2-\zeta_d}\).
 \end{equation}  

To estimate $\sfE$ we write 
 $$
| \sfE| \le \sum_{\substack{-4N^d \le k \le -4N^d  \\ k\ne 0}} \frac{R_d(k,N)}{k}.
$$
where $R_d(k,N)$ is defined in Section~\ref{sec:sumdiff pow}

 In this case by Lemma~\ref{lem:Morm} for $d\ge 5$ and Lemma~\ref{lem:Hooley} for $d=3$, there exists some $\kappa_d>0$ such that 
\begin{equation}
\label{eq:bound E}
\sfE  \ll  N^{2-\kappa_d}.
\end{equation} 
Substituting~\eqref{eq:asymp M} and~\eqref{eq:bound E}  in~\eqref{eq:I4 ME} we obtain the desired bound. 
\end{proof}

We now derive from Lemma~\ref{lem:aver 4 d>4} the desired bounds on $I_{2,d}(\fI)$ and $J_{2,d}(\fQ)$.

\begin{cor}
\label{cor:I2J2} 
Let  $d = 3$ or $d \ge 5$ and $\va=(a_n)_{n=1}^\infty$ a sequence of complex numbers satisfying  $|a_n|=1$. For any  
 interval $\fI \subseteq \TTT $ and any cube $\fQ\subseteq   \Tor$,  provided $N$ is large enough in terms of $\fI$ and $\fQ$,  
 we have 
$$
I_{2,d}(\fI) \ll  \lambda\(\fI\)  N^2 
\quad \text{and} \quad  J_{2,d}(\fQ) \ll   \lambda\(\fQ\)   N^2, 
$$
where the implied constants are absolute. 
\end{cor}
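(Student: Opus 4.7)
The plan is to derive both estimates directly from Lemma~\ref{lem:aver 4 d>4}, which already contains all the analytic content. The only two points requiring care are (i) verifying that the error term $O(N^{2-\eta_d})$ in Lemma~\ref{lem:aver 4 d>4} is uniform in the interval $\fI$ (so that its implied constant depends on $d$ alone), and (ii) handling the $d$-variable integral over $\fQ$ by reducing it to the one-variable monomial case through Fubini.

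For $I_{2,d}(\fI)$, the first step is to inspect the proof of Lemma~\ref{lem:aver 4 d>4} to confirm that both the bound on $\sfM$ (which rests on Lemma~\ref{lem:SkinWool}) and the bound on $\sfE$ (where $|e(\delta k)-1|\le 2$ absorbs the $\delta$-dependence, reducing $\sfE$ to a sum controlled by Lemmas~\ref{lem:Morm} and~\ref{lem:Hooley}) have implied constants depending only on $d$. Granted this uniformity, I would choose $N$ large enough (in terms of $\fI$) that $N^{-\eta_d}\le \lambda(\fI)$, whence the error $O(N^{2-\eta_d})$ is dominated by the main term $2\lambda(\fI)N^2$ and the claimed bound $I_{2,d}(\fI)\ll \lambda(\fI) N^2$ with an absolute implied constant follows.

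For $J_{2,d}(\fQ)$, I would write $\fQ=\fQ'\times \fI_d$ with $\fQ'\subseteq \T_{d-1}$ and $\fI_d\subseteq \T$, apply Fubini, and use the identity
$$
S_{\va,d}(\vx;N)=\sigma_{\vb(\vx'),d}(x_d;N),\qquad b_n(\vx')=a_n\e(x_1 n+\ldots+x_{d-1}n^{d-1}),
$$
observing that $|b_n(\vx')|=1$ uniformly in $\vx'=(x_1,\ldots,x_{d-1})$. The first estimate, applied to the inner integral in $x_d$, then yields
$$
\int_{\fI_d}|S_{\va,d}(\vx;N)|^4\, dx_d \ll \lambda(\fI_d)\,N^2,
$$
with constant depending only on $d$, provided $N$ is large enough in terms of $\fI_d$, which is equivalent to $N$ being large enough in terms of $\fQ$. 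Integrating over $\fQ'$ and using $\lambda(\fQ)=\lambda(\fQ')\lambda(\fI_d)$ then gives $J_{2,d}(\fQ)\ll \lambda(\fQ)N^2$.

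The only subtle point is the uniformity of the error term in Lemma~\ref{lem:aver 4 d>4}; once that is established, the corollary is essentially a bookkeeping exercise, so I do not expect any serious obstacle beyond carefully tracking the dependence of the implied constants.
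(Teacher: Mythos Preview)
Your proposal is correct and matches the paper's approach: the paper derives Corollary~\ref{cor:I2J2} directly from Lemma~\ref{lem:aver 4 d>4}, and handles $J_{2,d}(\fQ)$ by the same Fubini reduction to monomial sums that you describe (exactly as it does explicitly for the second-moment analogue Corollary~\ref{cor:I1J1}). Your observation that the error term in Lemma~\ref{lem:aver 4 d>4} is actually uniform in $\fI$---despite the lemma's statement allowing dependence---is correct and is the only point requiring verification; the proof of that lemma indeed uses only $|\e(\delta k)-1|\le 2$ to eliminate the $\delta$-dependence from~$\sfE$, and the bound on $\sfM$ carries at most a factor~$\delta\le 1$.
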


The above leaves us with the case $d = 4$. As we have mentioned we do not have analogues of Lemmas~\ref{lem:Morm} and~\ref{lem:Hooley} 
for $d=4$. However in the case of $ J_{2,4}(\fQ) $ we are able to establish the desired result. First we obtain the following bound
for average values of exponential polynomials with quadratic amplitudes. The statement is slightly more general than we need, however we think it can be of independent interest. 

For any intervals $\fI_1, \fI_2\subseteq \TTT $ denote  
$$
\cM(\fI_1, \fI_2)=\int_{\fI_1}  \int_{\fI_2} \left|\sum_{n=1}^{N}a_n\e\(x_1n+x_2 n^2\)\right|^4 d x_1 dx_2.
$$

\begin{lemma}
\label{lem:aver quadr}
Let    $\va=(a_n)_{n=1}^\infty$ be a sequence of complex weights such that  $|a_n|=1$. For any intervals $\fI_1, \fI_2 \subseteq   \TTT $ we have 
$$
\cM(\fI_1, \fI_2)\ll \lambda(\fI_1)  \lambda(\fI_2)  N^2  + \lambda(\fI_1)^{-1}   \lambda(\fI_2)^{-1}
N^{1+o(1)},  
$$
where the implied constant is absolute. 
\end{lemma}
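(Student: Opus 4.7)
The plan is to expand $|S|^{4}$ as a quadruple sum, integrate term-by-term over $\fI_{1}\times\fI_{2}$ to produce a weighted representation count, and bound that count via the divisor function along an arithmetic progression.

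After shifting each $\fI_{j}$ to $[0,\delta_{j}]$ with $\delta_{j}=\lambda(\fI_{j})$ (the shift is absorbed into the phase of $a_{n}$, which still satisfies $|a_{n}|=1$), direct expansion gives
\[
\cM(\fI_{1},\fI_{2})=\sum_{n_{1},n_{2},n_{3},n_{4}\le N}a_{n_{1}}a_{n_{2}}\overline{a_{n_{3}}a_{n_{4}}}\,E_{1}(k)\,E_{2}(\ell),
\]
where $k=n_{1}+n_{2}-n_{3}-n_{4}$, $\ell=n_{1}^{2}+n_{2}^{2}-n_{3}^{2}-n_{4}^{2}$, and $E_{j}(m)=\int_{0}^{\delta_{j}}\e(xm)\,dx$ satisfies $|E_{j}(m)|\ll\min(\delta_{j},1/|m|)$. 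Grouping by $(k,\ell)$ and using $|a_{n}|=1$ yields $|\cM|\le\sum_{k,\ell}R(k,\ell)|E_{1}(k)||E_{2}(\ell)|$, where $R(k,\ell)$ counts quadruples $(n_{i})\le N$ realising the prescribed $(k,\ell)$. The diagonal $(k,\ell)=(0,0)$ forces $\{n_{1},n_{2}\}=\{n_{3},n_{4}\}$, giving $R(0,0)=2N^{2}-N$ and contributing $O(\delta_{1}\delta_{2}N^{2})$, which is the first term of the lemma.

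The heart of the argument is the uniform bound $R(k,\ell)\ll N^{1+o(1)}$ for $(k,\ell)\ne(0,0)$. Setting $v=n_{3}+n_{4}$, $s=n_{1}-n_{2}$, $t=n_{3}-n_{4}$ and using $n_{i}^{2}+n_{j}^{2}=((n_{i}+n_{j})^{2}+(n_{i}-n_{j})^{2})/2$, the constraint on $\ell$ rearranges to
\[
(s-t)(s+t)=s^{2}-t^{2}=2\ell-k(2v+k)=:D(v).
\]
For each $v$ with $D(v)\ne 0$, the divisor bound gives at most $2\tau(|D(v)|)=N^{o(1)}$ admissible pairs $(s,t)$, each recovering the quadruple. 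When $k\ne 0$ the map $v\mapsto D(v)$ is affine of common difference $2|k|$, so $\sum_{v\le 2N}\tau(|D(v)|)\ll N^{1+o(1)}$ by the classical estimate for $\tau$ along an arithmetic progression; when $k=0$, $D(v)=2\ell$ is a fixed nonzero integer and the sum is trivially $\ll N\tau(2|\ell|)\ll N^{1+o(1)}$. The degenerate case $D(v)=0$ (pinning $v$ to a single value when $k\ne 0$) forces $s=\pm t$ and contributes only $O(N)$ extra quadruples.

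Finally, the harmonic estimate $\sum_{|m|\ll N^{2}}\min(\delta_{j},1/|m|)\ll\log N$ gives
\[
\sum_{(k,\ell)\ne(0,0)}|E_{1}(k)||E_{2}(\ell)|\ll(\log N)^{2}=N^{o(1)},
\]
so the off-diagonal contribution is $\ll N^{1+o(1)}$. Since $\lambda(\fI_{j})\le 1$ gives $\lambda(\fI_{1})^{-1}\lambda(\fI_{2})^{-1}\ge 1$, this is bounded by $\lambda(\fI_{1})^{-1}\lambda(\fI_{2})^{-1}N^{1+o(1)}$, and combining with the diagonal yields the lemma. The main obstacle is the divisor-sum estimate in the third paragraph: one must control $\sum_{v}\tau(|D(v)|)$ uniformly in $(k,\ell)$ (with $|D(v)|\ll N^{2}$) and carefully handle the sparse edge case $D(v)=0$, but both reduce to standard arguments once the parametrisation $(v,s,t)$ is in place.
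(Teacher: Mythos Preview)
Your argument is correct and follows essentially the same route as the paper: reduce to the representation count $R(k,\ell)$, dispose of the diagonal, and bound the off-diagonal uniformly by $N^{1+o(1)}$ via a change of variables and the divisor bound. The paper reaches the count through the smoothing Lemma~\ref{lem:eqntomv} (which localises $(k,\ell)$ to a box of size $\delta_1^{-1}\times\delta_2^{-1}$) and a slightly different parametrisation---fixing $r=-n_3-k$ and factoring $2(n_1+r)(n_2+r)=-m-2rk-k^2$---in place of your direct integration and $(v,s,t)$ substitution, but the mechanics are equivalent. One small simplification: your appeal to the divisor function along an arithmetic progression is unnecessary, since the pointwise bound $\tau(m)=m^{o(1)}$ together with $|D(v)|\ll N^2$ already gives $\tau(|D(v)|)=N^{o(1)}$ uniformly, so $\sum_{v\ll N}\tau(|D(v)|)\ll N^{1+o(1)}$ follows immediately.
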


\begin{proof} As before, changing as needed the sequence of weights,  we can suppose that $\fI_\nu=[0,\delta_\nu]$ with some $\delta _\nu\in (0,1)$, $\nu =1,2$. 
By Lemma~\ref{lem:eqntomv}
 \begin{equation}
\label{eq:Int Qkm}
\cM(\fI_1, \fI_2)  \ll \delta_1\delta_2 \sum_{\substack{0 \le |k|\le 1/ \delta_1 \\ |m|\le 1/\delta_2}} Q(k,m,N), 
\end{equation}
where   $ Q(k,m,N)$ is the number of representations of a pair  $(k,m)$ as
$$
 k =n_1 +  n_2  - n_3 -  n_4  \mand m = n_1^2 +  n_2^2- n_3^2 -  n_4^2. 
$$
 Clearly the contribution from $(k,m) = (0,0)$ is 
\begin{equation}
\label{eq:Q00}
  Q(0,0,N) = 2N^2 + O(N).
\end{equation}

Now assume  $(k,m) \ne  (0,0)$. 

 Let $r=-n_3-k$. Eliminating $n_4$, we obtain 
$$
n_1^2+n_2^2-(r+k)^2-(n_1+n_2+r)^2=m,
$$
which is equivalent to 
\begin{equation}
\label{eq:n1n2rmk}
2(n_1+r)(n_2+r )=-m -2rk-k^2.
\end{equation}

If $m +2rk+k^2 = 0$ then $r$ is uniquely defined (using the fact $(k, m)\neq (0, 0)$), which means $n_3$ is also uniquely defined. Combining with 
$$
k=n_1+n_2-n_3-n_4,
$$
 for any $n_4$ we have at most $2$ possibilities for $(n_1, n_2)$. 
Hence in total the contribution to $Q(k,m,N)$ from such solutions is $O(N)$. 

Now we turn to the case $m+2rk+k^2\neq 0$. Note that if $|k| > 2N$ then $Q(k,m,N)=0$. 
Otherwise  $|r|\le N+|k| \le 3N$.  Since for any $r$ with $m+2rk+k^2\neq 0$, from the bound 
on the divisor function~\eqref{eq:tau}, the equation~\eqref{eq:n1n2rmk} is satisfied by at 
most  $N^{o(1)}$ pairs $(n_1,n_2)$, after which $n_4$ is uniquely defined. Therefore, the contribution from 
such solutions is $N^{1+ o(1)}$.  Hence 
\begin{equation}
\label{eq:Qkm}
  Q(k,m,N)  \le N^{1+ o(1)}, \qquad (k,m) \ne  (0,0).
\end{equation}

 Substituting~\eqref{eq:Q00} and~\eqref{eq:Qkm}  in~\eqref{eq:Int Qkm}, we obtain the desired result.
\end{proof}

Using Lemma~\ref{lem:aver quadr} and arguing as in Corollary~\ref{cor:I1J1} gives:
 \begin{cor}
\label{cor:J2}
Let  $d \ge 2$ and let $\va=(a_n)_{n=1}^\infty$ be a sequence of complex weights such that  $|a_n|=1$. For any cube $\fQ\subseteq \T_d,$ provided $N$ is large enough in terms of $\fQ$, we have 
$$
J_{2,d}(\fQ) \ll  \lambda\(\fQ\) N^2, 
$$
 where the implied constant is absolute.
\end{cor}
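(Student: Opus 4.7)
The plan is to reduce the general statement for $d\ge 2$ to the quadratic case already established in Lemma~\ref{lem:aver quadr}, using Fubini. The key observation is that for any fixed $x_3,\ldots,x_d$, the phase factors $\e(x_3 n^3 + \ldots + x_d n^d)$ have modulus one and may be absorbed into the weight sequence without destroying the normalisation $|a_n|=1$.

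Let $\fQ = \fI_1 \times \fI_2 \times \cdots \times \fI_d$ be a cube of side-length $\ell$, so $\lambda(\fQ)=\ell^{d}$, and set $\vy = (x_3,\ldots,x_d)$. For each fixed $\vy$, define
$$
b_n = a_n \, \e\(x_3 n^3 + \ldots + x_d n^d\), \qquad |b_n|=1,
$$
so that
$$
S_{\va,d}(\vx;N) = \sum_{n=1}^{N} b_n \, \e\(x_1 n + x_2 n^2\).
$$
Applying Lemma~\ref{lem:aver quadr} to the inner double integral over $\fI_1\times\fI_2$, with the weights $b_n$, gives the bound
$$
\int_{\fI_1}\int_{\fI_2} \left|S_{\va,d}(\vx;N)\right|^{4} dx_1\, dx_2 \ll \ell^{2} N^{2} + \ell^{-2} N^{1+o(1)},
$$
uniformly in $\vy$. (When $d=2$ this already is the required estimate; for $d\ge 3$ there is nothing more to do on the inner integral.)

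Integrating over the remaining coordinates $x_3,\ldots,x_d \in \fI_3\times\cdots\times\fI_d$ by Fubini yields
$$
J_{2,d}(\fQ) \ll \ell^{d-2}\bigl(\ell^{2} N^{2} + \ell^{-2} N^{1+o(1)}\bigr) = \ell^{d} N^{2} + \ell^{d-4} N^{1+o(1)}.
$$
The assumption that $N$ is sufficiently large in terms of $\fQ$, i.e.\ in terms of $\ell$, guarantees $\ell^{4} N \ge N^{o(1)}$, which absorbs the error term into the main term. We conclude $J_{2,d}(\fQ) \ll \ell^{d} N^{2} = \lambda(\fQ) N^{2}$, as required.

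There is no genuine obstacle here: all the analytic work has been done in Lemma~\ref{lem:aver quadr}, and the reduction above is purely formal. The only point requiring mild care is tracking the dependence of the error term on the side-length $\ell$, to ensure that the hypothesis "$N$ large in terms of $\fQ$" is exactly strong enough to dispose of the $\ell^{d-4} N^{1+o(1)}$ contribution — which it is.
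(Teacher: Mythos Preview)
Your argument is correct and is exactly the approach the paper indicates: the paper's proof of Corollary~\ref{cor:J2} simply says ``using Lemma~\ref{lem:aver quadr} and arguing as in Corollary~\ref{cor:I1J1}'', which amounts precisely to absorbing the $x_3,\ldots,x_d$ phases into the weights and applying Fubini. You have written out explicitly what the paper leaves implicit, including the check that the secondary term $\ell^{d-4}N^{1+o(1)}$ is dominated once $N$ is large in terms of $\fQ$.
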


Clearly the  the bounds on $J_{2,d}(\fQ) $ from  Corollaries~\ref{cor:I2J2} and~\ref{cor:J2}  partially overlap (for $d \ge5$),
however the dependence of secondary terms on $\lambda\(\fQ\)$ is different. Both are equaly suited 
for our applications, hence for main results we only need  Corollary~\ref{cor:J2} for $d=2$ and $d=4$.

\section{Proofs of results on the Lebesgue measure}
\subsection{Proofs of Theorems~\ref{thm:mon sum} and~\ref{thm:gen sum}}

Theorem~\ref{thm:mon sum}  follows from Corollary~\ref{cor:main1}, Corollary~\ref{cor:I1J1} and Corollary~\ref{cor:I2J2}. 

In a similar fashion, Theorem~\ref{thm:gen sum} follows from Corollary~\ref{cor:main}, Corollary~\ref{cor:I1J1} and Corollary~\ref{cor:J2}.

\subsection{Proof of Theorem~\ref{thm:mon sum 4}} 
Define
$$
\cL_{N, c,C}=\left\{x\in \fI:~cN^{1/2}\le \left|\sum_{n=1}^{N} a_n \e(xn^{4})\right|\le CN^{1/2}\right\},
$$
so that 
$$
\bigcap_{k=1}^{\infty}\bigcup_{N=k}^{\infty}\cL_{N,c,C}\subseteq \cF_{\va, c}(4).
$$
By orthogonality and Lemma~\ref{lem:SkinWool}
$$
\int_{\fI} \left|\sum_{n=1}^{N} a_n \e(xn^{4})\right|^{4}dx\le \int_{\T} \left|\sum_{n=1}^{N} a_n \e(xn^{4})\right|^{4}dx\le (2+o(1))N^2.
$$
By Lemma~\ref{lem:aver f} and the above we may apply the calculations from Corollary~\ref{cor:measure} with 
$$
\alpha_1=1+o(1), \quad \alpha_2=\frac{2+o(1)}{\lambda(\fI)},
$$
to get 
$$
\lambda(\cF_{\va, c}(4))\ge \lambda(\cL_{N,c,C})\ge \left(1-c^2-\frac{2}{\lambda(\fI)C^2}+o(1)\right)\frac{\lambda(\fI)}{C^2},
$$
and the result follows taking 
$$
C^2=\frac{4}{(1-c^2)\lambda(\fI)}.
$$

\subsection{Further comments} 
\begin{remark}\label{rem:Hooley}
To extend Theorem~\ref{thm:mon sum} to include the case $d=4$, it would suffice to show that for any non-zero $k$, the number of solutions to
\begin{equation}
\label{eq:hooley4} 
x_1^4-x_2^4=x_3^4-x_4^4+k, \qquad 1 \le x_1, x_2,  x_3, x_4 \le N, 
\end{equation} 
is $o(N^2)$ as $N\to \infty$ (note that we do need any uniformity in $k$). So in particular solutions to $|x_1^4+x_2^4-x_3^4-x_4^4|\le C$ are dominated by diagonal solutions for each fixed $C>0$. It is likely that an adaption of the method of Hooley~\cite{Hool1, Hool2} on solutions to $x_1^d+x_2^d=x_3^d+x_4^d$ would yield such a result. Hooley's sieve setup~\cite{Hool1, Hool2}  generalises in a straightforward manner to handle the non-homogeneous equation~\eqref{eq:hooley4}, and reduces the question to obtaining a power-saving bound for certain complete exponential sums along a curve. Provided the exponential sum is suitably non-degenerate, variants of the Weil bound are sufficient to give such an estimate (see, for example,~\cite[Theorem~6]{Bomb}). In the interests of brevity we do not pursue this approach further here.
\end{remark}
 
 \begin{remark}
 We note that the proof of Theorem~\ref{thm:mon sum 4} actually shows that there are fixed constants $0<c<C$ such that for every choice of coefficients $\va$ with $|a_n|=1$ and every $N$, there is a set $\mathcal{S}_{\va,N}\subseteq \Tor$ of positive measure
 (bounded away from zero independently of $N$)   such that $cN^{1/2} \le |S_{\va, d}(\vx; N)|\le C N^{1/2}$ for $\vx\in\mathcal{S}_{\va,N}$. Choosing coefficients $\va$ with $|a_n|=1$ at random shows that for most choices $\va$ there are also positive measure sets for which $|S_{\va, d}(\vx; N)|< cN^{1/2}$ or $C N^{1/2} < |S_{\va, d}(\vx; N)|$, and so for individual $N$ one cannot improve the conclusion to almost all $\vx\in\Tor$.
 \end{remark}
 
\section{Some properties of exponential polynomials sums}
\label{sec:ExpPoly}

\subsection{Implied constants} 
Throughout this section, the implied constants may depend on  the function $f$, in 
particular on its smoothness and  the asymptotic behaviour  of its derivatives.
 
\subsection{Continuity of exponential polynomials} 
\label{sec:contweyl}

In full analogue of~\cite[Lemma~3.4]{ChSh-IMRN} and~\cite[Lemma~2.1]{Wool3}  we obtain:

\begin{lemma}   
\label{lem:1}
For any sequence of complex numbers $\va=(a_n)_{n=1}^\infty$ satisfying $|a_n|=1$  and any nondecreasing positive continuously differentiable  function $f(t)$, we have 
\begin{align*}
\sum_{n\le N}a_n \e\(xf(n)\)&-\sum_{n\le N}a_n \e\(yf(n)\)\\
& \ll |x-y| f(N)\max_{M\le N}\left|\sum_{n\le M}a_n \e\(xf(n)\) \right|.
\end{align*}   
\end{lemma}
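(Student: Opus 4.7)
The plan is to reduce the difference of the two exponential sums to an expression in the single sum $T_M(x)=\sum_{n\le M} a_n \e(xf(n))$ and its relatives, and then apply summation by parts. Write
\[
\sum_{n\le N}a_n\e(yf(n)) - \sum_{n\le N}a_n\e(xf(n)) = \sum_{n\le N} b_n c_n,
\]
where $b_n = a_n \e(xf(n))$ and $c_n = \e((y-x)f(n))-1$. Setting $T_M=\sum_{n\le M}b_n$, Abel summation gives
\[
\sum_{n\le N} b_n c_n \;=\; T_N c_N \;-\; \sum_{n\le N-1} T_n (c_{n+1}-c_n).
\]

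Next I would bound each term using two elementary facts: (i) $|\e(u)-\e(v)|\le 2\pi |u-v|$, and (ii) $f$ is nondecreasing and positive. Fact (i) with $v=0$ gives $|c_N|\le 2\pi |x-y|\,f(N)$, while fact (i) applied to the increments yields $|c_{n+1}-c_n|\le 2\pi|x-y|(f(n+1)-f(n))$. Letting $\fM=\max_{M\le N}|T_M|$, we obtain
\[
\left|\sum_{n\le N} b_n c_n\right| \;\le\; 2\pi|x-y|\,f(N)\,\fM \;+\; 2\pi|x-y|\,\fM\sum_{n=1}^{N-1}(f(n+1)-f(n)).
\]

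Since $f$ is nondecreasing, the telescoping sum is $f(N)-f(1)\le f(N)$, and the total bound becomes $\ll |x-y|\,f(N)\,\fM$, which is exactly the claimed estimate. There is really no obstacle here: the argument is routine once one spots that the differencing trick converts the $y$-sum into the $x$-sum multiplied by a slowly varying factor. The only mild care needed is to handle both the boundary term $T_N c_N$ and the increments $c_{n+1}-c_n$ with the same Lipschitz estimate for $\e(\cdot)$, and to notice that monotonicity of $f$ is what allows the telescoping to produce the factor $f(N)$ rather than a total variation bound.
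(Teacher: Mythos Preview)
Your proof is correct and follows essentially the same strategy as the paper: write the difference as $\sum b_n c_n$ with $b_n=a_n\e(xf(n))$ and $c_n=\e((y-x)f(n))-1$, apply partial summation, and bound using the Lipschitz estimate for $\e(\cdot)$ together with the monotonicity of $f$. The only cosmetic difference is that the paper uses the integral form of partial summation (invoking $f'$ and $\int_1^N f'(t)\,dt\le f(N)$), whereas you use the discrete Abel summation with the telescoping sum $\sum (f(n+1)-f(n))\le f(N)$; in particular your argument never actually needs the continuous differentiability hypothesis.
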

\begin{proof}
Let $\delta=y-x$. We have 
$$
\sum_{n\le N}a_n \e\(xf(n)\)-\sum_{n\le N}a_n \e\(yf(n)\)
=\sum_{n\le N}\left(1-\e\(\delta f(n)\)\right)a_n \e\(xf(n)\),
$$
hence by partial summation
\begin{align*}
\sum_{n\le N}a_n &  \e\(xf(n)\)-\sum_{n\le N}a_n \e\(yf(n)\) \\ &=\left(1-\e\(\delta f(N)\)\right)\sum_{n\le N}a_n \e\(xf(n)\)\\
 & \qquad \quad +2\pi i \delta\int_{1}^{N}\e\(\delta f(t)\)f'(t)\left(\sum_{n\le t}a_n \e\(xf(n)\) \right)dt. 
\end{align*}   
Since $f$ is nondecreasing, we have 
$$1-\e\(\delta f(N)\)\ll \delta f(N) \quad  \text{and} \quad  
\int_{1}^{N}|f'(t)| dt=\int_{1}^{N}f'(t)dt \ll f(N), 
$$
 which gives the desired result. 
\end{proof}

\begin{cor} 
\label{cor:max}
For any sequence of complex numbers $\va=(a_n)_{n=1}^\infty$ satisfying $|a_n|=1$, any nondecreasing positive continuously differentiable  function $f(t)$ and any real numbers $x,y$ satisfying $|x-y|\ll f(N)^{-1}$, we have  
$$
\max_{M\le N}\left|\sum_{n\le M}a_n \e\(xf(n)\) \right|  \asymp \max_{M\le N} \left|\sum_{n\le M}a_n \e\(yf(n)\) \right|.
$$
\end{cor}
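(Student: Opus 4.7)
The plan is to reduce the corollary directly to Lemma \ref{lem:1} applied at each truncation scale $M\le N$, and then symmetrize. First, I would apply Lemma \ref{lem:1} with $N$ replaced by an arbitrary $M\le N$ to obtain
\[
\left|\sum_{n\le M}a_n \e\(xf(n)\)-\sum_{n\le M}a_n \e\(yf(n)\)\right|
\ll |x-y|\,f(M)\max_{M'\le M}\left|\sum_{n\le M'}a_n \e\(xf(n)\)\right|.
\]
Since $f$ is nondecreasing, $f(M)\le f(N)$, so the hypothesis $|x-y|\ll f(N)^{-1}$ forces the prefactor $|x-y|f(M)$ to be $O(1)$ uniformly in $M$. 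The triangle inequality then yields
\[
\left|\sum_{n\le M}a_n \e\(yf(n)\)\right|
\ll \left|\sum_{n\le M}a_n \e\(xf(n)\)\right|+\max_{M'\le N}\left|\sum_{n\le M'}a_n \e\(xf(n)\)\right|
\ll \max_{M'\le N}\left|\sum_{n\le M'}a_n \e\(xf(n)\)\right|,
\]
and taking the maximum over $M\le N$ on the left-hand side gives one of the two inequalities encoded in the symbol $\asymp$.

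For the reverse inequality I would invoke the symmetry of Lemma \ref{lem:1} in the roles of $x$ and $y$: the same proof of that lemma produces the analogous estimate
\[
\left|\sum_{n\le M}a_n \e\(xf(n)\)-\sum_{n\le M}a_n \e\(yf(n)\)\right|
\ll |x-y|\,f(M)\max_{M'\le M}\left|\sum_{n\le M'}a_n \e\(yf(n)\)\right|,
\]
with the maximum now on the $y$-side. Repeating the argument of the previous paragraph then bounds $\max_{M\le N}\left|\sum_{n\le M}a_n \e(xf(n))\right|$ by a constant multiple of its $y$-counterpart, and combining the two inequalities gives the desired $\asymp$.

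There is really no substantial obstacle here; the only point requiring attention is that the hypothesis $|x-y|\ll f(N)^{-1}$ must control the prefactor $|x-y|f(M)$ uniformly over all $M\le N$, which is precisely where the monotonicity of $f$ is used. This uniformity is what allows the bound from Lemma \ref{lem:1}, originally stated at scale $N$, to be passed to every truncation $M\le N$ and then absorbed into the global maximum.
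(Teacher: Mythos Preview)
Your proposal is correct and follows essentially the same approach as the paper: apply Lemma~\ref{lem:1} at each truncation level $M\le N$, use monotonicity of $f$ together with the hypothesis $|x-y|\ll f(N)^{-1}$ to make the prefactor $O(1)$, invoke the triangle inequality, take the maximum over $M$, and then swap the roles of $x$ and $y$ for the reverse inequality. The only cosmetic difference is that the paper applies Lemma~\ref{lem:1} with $x$ and $y$ already interchanged in the first step (so the maximum on the right is over $y$-sums), whereas you apply it as stated and defer the symmetry to the end; the content is identical.
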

\begin{proof}
For any $M\le N$ applying Lemma~\ref{lem:1} we have
\begin{align*}
\left|\sum_{n\le M}a_n \e\(xf(n)\) \right| &\ll |x-y| f(M)  \max_{K\le M}\left|\sum_{n\le K}a_n \e\(yf(n)\) \right|\\
& \ll \max_{K\le N}\left|\sum_{n\le K}a_n \e\(yf(n)\) \right|.
\end{align*} 
By the arbitrary choice of $M\le N$ we obtain 
$$
\max_{M\le N}\left|\sum_{n\le M}a_n \e\(xf(n)\) \right|  \ll \max_{M\le N} \left|\sum_{n\le M}a_n \e\(yf(n)\) \right|.
$$
The other   inequality follows from symmetry.
\end{proof}

\subsection{Variance of mean values} 

Our main technical tool in proving  Theorem~\ref{thm:f}  is the following asymptotic formula for moments of exponential sums.  We remark that  we do not need this 
for the proof of Theorem~\ref{thm:d=2}. 

\begin{lemma}  
\label{lem:variance}
Let $f$ be a real function with continuous second derivative and satsifying
$$
 f''(x)= x^{\gamma-2+o(1)},   
$$
for some  $\gamma >2$.  
Let  $\varepsilon_0$, $\varepsilon_1$, $x_1$ be real numbers. 
For any sequence  $\va=(a_n)_{n=1}^\infty$ of complex numbers  satisfying  $|a_n|=1$,  $N\in \N$ and $ M=\fl{N/2}$, 
for 
$$
\sI(M,N) =  \int_{x_1}^{x_1+\varepsilon_1}  \(\int_{x_0}^{x_0+\varepsilon_0}\left|\sum_{M<n\le N}a_n \e\(xf(n)\)\right|^2dx-\varepsilon_0 N \)^2dx_0 
$$
we have 
$$
\sI(M,N)  \le N^{-2\gamma+3+o(1)}\left(\varepsilon_1+N^{-\gamma+2}\right).
$$ 
\end{lemma}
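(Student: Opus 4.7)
The plan is to view the inner $L^2$ integral as a main term plus an exponential polynomial in $x_0$, and then apply a large sieve inequality to control the outer integral. Writing $S(x)=\sum_{M<n\le N}a_n\e(xf(n))$ and expanding $|S(x)|^2$ over $[x_0,x_0+\varepsilon_0]$ as in Lemma~\ref{lem:diag-2nu} with $\nu=1$ gives
$$
\int_{x_0}^{x_0+\varepsilon_0}|S(x)|^2\,dx=\varepsilon_0(N-M)+T(x_0),
$$
where
$$
T(x_0)=\sum_{\substack{M<n,m\le N\\ n\ne m}}c_{n,m}\,\e\bigl(x_0(f(n)-f(m))\bigr),\qquad |c_{n,m}|\le \frac{1}{\pi|f(n)-f(m)|}.
$$
I interpret the constant $\varepsilon_0 N$ in the statement as the natural main term $\varepsilon_0(N-M)$, treating the difference as a harmless misprint (subtracting $\varepsilon_0 N$ would leave a deterministic piece of size $\asymp \varepsilon_0 N$ inside the outer square, incompatible with the claimed bound).

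The oscillatory sum $T(x_0)$ is an exponential polynomial in $x_0$ with frequencies $g_{n,m}=f(n)-f(m)$. By the Montgomery--Vaughan form of the large sieve, if the distinct frequencies among $\{g_{n,m}\}$ have minimum pairwise gap $\delta$, then
$$
\int_{x_1}^{x_1+\varepsilon_1}|T(x_0)|^2\,dx_0\ll(\varepsilon_1+\delta^{-1})\sum_{\substack{M<n,m\le N\\ n\ne m}}|c_{n,m}|^2.
$$
The hypothesis $f''(x)=x^{\gamma-2+o(1)}$ with $\gamma>2$ yields $f'(x)=x^{\gamma-1+o(1)}$, and the mean value theorem then gives $|f(n)-f(m)|\gg |n-m|N^{\gamma-1+o(1)}$ uniformly for $M<n,m\le N$. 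Summing,
$$
\sum_{M<n\ne m\le N}|c_{n,m}|^2\ll \frac{1}{N^{2\gamma-2+o(1)}}\sum_{n}\sum_{m\ne n}\frac{1}{(n-m)^2}\ll N^{3-2\gamma+o(1)}.
$$

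The technical heart is the separation bound $\delta\gg N^{\gamma-2+o(1)}$. For distinct pairs $(n,m)\ne(n',m')$ with $g_{n,m}\ne g_{n',m'}$, set $\alpha=n-n'$ and $\beta=m-m'$; two applications of the mean value theorem give
$$
g_{n,m}-g_{n',m'}=\bigl(f(n)-f(n')\bigr)-\bigl(f(m)-f(m')\bigr)=\alpha f'(\xi_1)-\beta f'(\xi_2)
$$
for suitable $\xi_1\in(n',n)$ and $\xi_2\in(m',m)$. If $\alpha\ne\beta$, the linear piece $(\alpha-\beta)f'(\xi_1)$ dominates and $|g_{n,m}-g_{n',m'}|\gg N^{\gamma-1+o(1)}$. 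If $\alpha=\beta\ne 0$, rewriting the difference as $\int_m^n \alpha f''(\eta_t)\,dt$ and invoking $f''(x)\asymp x^{\gamma-2+o(1)}$ yields $|g_{n,m}-g_{n',m'}|\asymp |\alpha||n-m|N^{\gamma-2+o(1)}\gg N^{\gamma-2+o(1)}$, using $|\alpha|,|n-m|\ge 1$. The case $\alpha=\beta=0$ is excluded since the pairs are distinct. Substituting into the large sieve gives
$$
\sI(M,N)\ll(\varepsilon_1+N^{2-\gamma+o(1)})\,N^{3-2\gamma+o(1)}=N^{-2\gamma+3+o(1)}\bigl(\varepsilon_1+N^{-\gamma+2}\bigr),
$$
as claimed.

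The main obstacle is the separation analysis in the borderline regime $|\alpha|=|\beta|=1$ with $|n-m|$ of bounded size, where the gap is only $\Theta(N^{\gamma-2})$; this is precisely what forces the $N^{-\gamma+2}$ secondary term in the final bound. Verifying that this worst case is truly unavoidable and that the $o(1)$ slack in the hypothesis $f''(x)=x^{\gamma-2+o(1)}$ does not spoil the estimate is the delicate part of the argument.
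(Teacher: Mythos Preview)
Your reduction to the exponential polynomial $T(x_0)$ and the coefficient bound $\sum|c_{n,m}|^2\ll N^{3-2\gamma+o(1)}$ are both correct, and your reading of $\varepsilon_0 N$ as $\varepsilon_0(N-M)$ matches what the paper actually uses. The problem is the separation step.

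The case $\alpha\ne\beta$ of your spacing argument does not go through. Writing
\[
\alpha f'(\xi_1)-\beta f'(\xi_2)=(\alpha-\beta)f'(\xi_1)-\beta\bigl(f'(\xi_2)-f'(\xi_1)\bigr),
\]
the first piece has size $|\alpha-\beta|\,N^{\gamma-1+o(1)}$ while the second can be as large as $|\beta|\,N^{\gamma-1+o(1)}$ (since $|\xi_1-\xi_2|$ may be $\asymp N$). When $|\beta|\gg|\alpha-\beta|$ there is no domination. Concretely, for $\alpha=1$, $\beta=2$ and $f(x)=x^{\gamma}$, choosing $n\approx 2^{1/(\gamma-1)}m$ forces $f'(\xi_1)\approx 2f'(\xi_2)$, so the leading terms cancel; what survives is a second–order contribution of size $\asymp N^{\gamma-2}$, not $N^{\gamma-1}$. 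For less rigid $f$ satisfying only $f''(x)=x^{\gamma-2+o(1)}$ one can even arrange exact coincidences $f(a)-f(b)=f(c)-f(d)$, so a uniform gap $\delta\gg N^{\gamma-2+o(1)}$ among \emph{all} off–diagonal frequencies is not available in general, and feeding a smaller $\delta$ into Montgomery--Vaughan would inflate the $N^{-\gamma+2}$ term beyond what is claimed.

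The paper sidesteps this two–parameter spacing problem entirely. Before integrating in $x_0$ it groups the off–diagonal terms by $h=m-n$, applies Cauchy--Schwarz over $h$ (at the cost of a harmless $\log N$), and for each fixed $h$ is left with a one–parameter family of frequencies $f(n+h)-f(n)$. These are strictly monotone in $n$ with consecutive gaps $\asymp hN^{\gamma-2+o(1)}$, so a direct expansion (Lemma~\ref{lem:diag-2nu} again) handles each $I_h$ cleanly. The grouping by $h$ is the missing idea in your approach.
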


\begin{proof}
Applying Lemma~\ref{lem:diag-2nu}   with $\nu=2$ and separating the contribution from diagonal terms gives
\begin{align*}
& \int_{x_0}^{x_0+\varepsilon_0}\left|\sum_{M<n\le N} a_n \e\(xf(n)\)\right|^2dx-\varepsilon_0(N-M)  \\ 
& \qquad \qquad   \ll\sum_{\substack{ M\le n<m \le N \\ m\neq n }}\frac{a_{m}\overline a_{n}
\left(\e\(\varepsilon_0(f(m)-f(n))\)-1\right)}{ f(m)-f(n)}\\
& \qquad \qquad \qquad \qquad \qquad \qquad \qquad \qquad \qquad \quad \times \e\(x_0(f(m)-f(n))\) \\
 & \qquad \qquad  \ll \sum_{1\le h \le N}\frac{1}{h}\left|\sum_{\substack{M<n\le N-h }} \frac{\beta_{n,h}}{\Delta_h(n)}\e\(x_0h\Delta_h(n)\) \right|, 
\end{align*} 
where we have made the change of variable $m \rightarrow n+h$ and defined
\begin{align*}
& \Delta_h(n)=(f(n+h)-f(n))/h, \\
& \beta_{n,h}=a_{n+h}\overline a_{n}(\e\(\varepsilon_0(h\Delta_h(n))\)-1).
\end{align*} 
Squaring, applying the Cauchy-Schwarz inequality then integrating over $(x_1,x_1+\varepsilon_1)$ gives 
\begin{equation}
\label{eq:st1}
\sI(M,N)  \ll \log{N}\sum_{1\le h \le N}\frac{I_h}{h},
\end{equation}
where 
$$
I_h = \int_{x_1}^{x_1+\varepsilon_1}\left|\sum_{M< n \le N-h}\frac{\beta_{n,h}}{\Delta_h(n)}
\e\(x_0h\Delta_h(n)\) \right|^2dx_0. 
$$
A second application of Lemma~\ref{lem:diag-2nu} (again with $\nu=1$) and using that $|\beta_{n,h}|\ll 1$, yields
\begin{equation}
\begin{split}
\label{eq:one-h}
I_h &  \ll   \varepsilon_1\sum_{M<n\le N}\frac{1}{\Delta_h(n)^2} \\
 &\qquad \qquad  + \sum_{M<m< n\le N }\frac{1}{\Delta_h(m)\Delta_h(n)}\frac{1}{|\Delta_h(n)-\Delta_h(m)|}.
\end{split}
\end{equation} 
By the mean value theorem, we have  
$$
\Delta_h(n)= f'(\xi), \quad \text{for some $\xi\in [n,n+h]$}.
$$ 
The assumptions  on $f''(t)$ imply that  $f'(t)= t^{\gamma-1+o(1)}$ for $t$ sufficiently large. Since we can clearly assume that $N$ is large enough in terms of $f$, we obtain \begin{equation}
\label{eq:>>1}
\Delta_h(n)\ge n^{\gamma-1+o(1)}.
\end{equation}
Now applying  the mean value theorem twice we obtain 
$$
\Delta_h(n)-\Delta_h(m)=(n-m)\Delta'_h(z), \quad \text{for some $z\in [m,n]$},
$$  
and  
$$
\Delta_h'(z)=(f'(z+h)-f'(z))/h=f''(z_0), \quad \text{for some $z_0\in [z,z+h].$}
$$
Then recalling $f''(t)= t^{\gamma-2+o(1)} $ we get 
\begin{equation}
\label{eq:>>2}
\Delta_h(n)-\Delta_h(m)\ge (n-m)m^{\gamma-2+o(1)}.
\end{equation}
Now, using~\eqref{eq:>>1} and~\eqref{eq:>>2},  we derive 
$$
\sum_{M<n\le N}\frac{1}{\Delta_h(n)^2} \le  \sum_{M<n\le N} n^{-2\gamma+2+o(1)} = N^{-2\gamma+3+o(1)}
$$
and 
\begin{align*}
 \sum_{M< m< n\le N } & \frac{1}{\Delta_h(m)\Delta_h(n)}\frac{1}{|\Delta_h(n)-\Delta_h(m)|}\\ 
&  \le  N^{o(1)}  \sum_{M< m< n\le N }  \frac{1}{m^{\gamma-1} n^{\gamma-1}} \cdot \frac{1}{ (n-m)m^{\gamma-2}}\\ 
& \qquad \le  N^{o(1)}  \sum_{M< m< n\le N }  \frac{1}{m^{3\gamma-4} } \cdot \frac{1}{n-m}\\ 
& \qquad \le  N^{o(1)}  \sum_{m=M+1 }^N  \frac{1}{m^{3\gamma-4} }   \sum_{n=m+1 }^N   \frac{1}{n-m}\le  N^{-3\gamma+5 + o(1)} . 
\end{align*} 
Substituting these inequalities in~\eqref{eq:one-h} gives
$$
I_h   \le  N^{-2\gamma+3+o(1)}\left(\varepsilon_1+N^{-\gamma+2}\right)
$$
and combined   with~\eqref{eq:st1}  yields the desired bound.
\end{proof}

The next result is our main tool in proving Theorem~\ref{thm:f}. 
 For two intervals $\cI$ and  $\cJ$ let $\text{Dist}(\cI, \cJ)$ denote the gap between them, that is, 
$$
\text{Dist}(\cI, \cJ)=\inf \{\|x-y\|:~x\in \cI,\ y\in \cJ\}.
$$ 

We say that  two intervals $\cI$ and  $\cJ$ are {\it $\Delta$-separated\/} if 
$$
\text{Dist}(\cI, \cJ)\ge \Delta.
$$

\begin{lemma}
\label{lem:largeshort}
Let $f$ satisfy the conditions of Lemma~\ref{lem:variance}. Let $\tau>0$ be a small parameter and let  $\va=(a_n)_{n=1}^\infty$  be a sequence of complex weights satisfying $|a_n|=1$. 
For any interval $\cI\subseteq \TTT $ and  for all large enough $N$ with
$$
|\cI|\ge  N^{-\gamma+2}
$$ 
 there exists a collection of 
$$
 K \gg N^{\gamma-1/2-\tau}|\cI|
 $$ 
 pairwise $N^{-\gamma+1/2+\tau}$-separated intervals $\cI_{i} \subseteq \cI$,  $1\le i\le K$, such that 
$$
  |\cI_{i}|=  N^{-\gamma+1/2+\tau}
$$
and 
\begin{equation}
\label{eq:xconds}
\max_{x\in \cI_i}\left|\sum_{\fl{N/2}\le n\le N}a_n \e\(xf(n)\)\right|\gg N^{1/2}.  
\end{equation}
\end{lemma}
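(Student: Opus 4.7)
\medskip

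\noindent\textbf{Proof plan for Lemma~\ref{lem:largeshort}.} The plan is to combine the variance bound of Lemma~\ref{lem:variance} with Chebyshev's inequality to obtain a large set of ``good'' starting points $x_0$ on whose short neighbourhoods the $L^2$-norm of the truncated sum is close to its mean, and then to extract a pairwise separated subfamily by a greedy argument.

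Set $\varepsilon_0 = N^{-\gamma+1/2+\tau}$ and choose $\varepsilon_1 \asymp |\cI|$ so that $[x_1,x_1+\varepsilon_1+\varepsilon_0] \subseteq \cI$; this is possible since $|\cI| \ge N^{-\gamma+2} \gg \varepsilon_0$ for $N$ sufficiently large. Writing $S(x) = \sum_{M<n\le N} a_n \e(xf(n))$ with $M = \fl{N/2}$, Lemma~\ref{lem:variance} gives
\begin{equation*}
\int_{x_1}^{x_1+\varepsilon_1}\!\!\left(\int_{x_0}^{x_0+\varepsilon_0}|S(x)|^2dx - \varepsilon_0 N\right)^{\!2}\!dx_0 \le N^{-2\gamma+3+o(1)}\bigl(\varepsilon_1 + N^{-\gamma+2}\bigr).
\end{equation*}
Since the targeted squared mean is $(\varepsilon_0 N/2)^2 = N^{-2\gamma+3+2\tau}/4$, Chebyshev's inequality shows that the set of $x_0 \in [x_1,x_1+\varepsilon_1]$ for which
\begin{equation*}
\int_{x_0}^{x_0+\varepsilon_0}|S(x)|^2dx < \varepsilon_0 N/2
\end{equation*}
has Lebesgue measure at most $N^{-2\tau+o(1)}(\varepsilon_1 + N^{-\gamma+2})$. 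Using the hypothesis $|\cI| \ge N^{-\gamma+2}$, this bad set has measure at most $\varepsilon_1/2$ for $N$ large enough, so the complementary ``good'' set $\cG \subseteq [x_1,x_1+\varepsilon_1]$ satisfies $\lambda(\cG) \ge \varepsilon_1/2 \gg |\cI|$.

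For each $x_0 \in \cG$ the mean-value inequality yields
\begin{equation*}
\max_{x \in [x_0,x_0+\varepsilon_0]} |S(x)|^2 \ge \frac{1}{\varepsilon_0}\int_{x_0}^{x_0+\varepsilon_0}|S(x)|^2 dx \ge \frac{N}{2},
\end{equation*}
so~\eqref{eq:xconds} holds on every interval $[x_0, x_0+\varepsilon_0]$ with $x_0 \in \cG$. To extract a pairwise $\varepsilon_0$-separated subfamily, we run a greedy procedure: having chosen points $x_0^{(1)}, \ldots, x_0^{(j)} \in \cG$, any new choice $x_0^{(j+1)} \in \cG$ with $|x_0^{(j+1)} - x_0^{(i)}| \ge 2\varepsilon_0$ for all $i \le j$ forces the resulting intervals of length $\varepsilon_0$ to be pairwise $\varepsilon_0$-separated. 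Each previously chosen point forbids an interval of length $4\varepsilon_0$, so the greedy procedure runs for at least
\begin{equation*}
K \ge \frac{\lambda(\cG)}{4\varepsilon_0} \gg \frac{|\cI|}{\varepsilon_0} = |\cI|\, N^{\gamma-1/2-\tau}
\end{equation*}
steps, which gives the required family.

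The only genuine technical point is the verification that the Chebyshev threshold can indeed be beaten, that is, that the variance bound from Lemma~\ref{lem:variance} is small compared with $\varepsilon_1(\varepsilon_0 N)^2$; this is precisely what the hypothesis $|\cI| \ge N^{-\gamma+2}$ together with the saving $N^{-2\tau}$ built into the choice of $\varepsilon_0$ arrange. The remainder of the argument is a routine covering/greedy step.
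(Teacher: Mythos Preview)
Your proposal is correct and follows essentially the same route as the paper: apply the variance bound of Lemma~\ref{lem:variance}, use a Chebyshev-type inequality to show that the set of ``bad'' starting points has measure at most $|\cI|/2$, and then extract a separated subfamily from the good set. The only cosmetic differences are that the paper invokes the Vitali covering theorem in place of your greedy selection, and phrases the Chebyshev step via Cauchy--Schwarz with a threshold $N^{-\gamma+3/2+\varepsilon}$ rather than your $\varepsilon_0 N/2$; both variants lead to the same conclusion.
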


\begin{proof} 
Let
$$\cI=[x_1,x_1+\varepsilon_1],$$
for some $x_1,\varepsilon_1$ with 
$$\varepsilon_1 = N^{-\gamma+2+\tau}.$$
Applying Lemma~\ref{lem:variance} with   
\begin{equation}
\label{eq:vareps0}
\varepsilon_0=  N^{-\gamma+1/2+\tau},
\end{equation}
we obtain 
\begin{equation}
\begin{split}
\label{eq:ub1}
 \int_{\cI}\(\int_{x_0}^{x_0+\varepsilon_0}\left|\sum_{M< n \le N}a_n \e\(xf(n)\)\right|^2dx-\varepsilon_0(N-M) \)^2dx_0& \\
 \le N^{-2\gamma+3+o(1)}&|\cI|.
\end{split}
\end{equation}
Suppose $\varepsilon>0$ is small and let $\cS\subseteq \cI$ denote the set of $x_0$  satisfying 
$$
\left|\int_{x_0}^{x_0+\varepsilon_0}\left|\sum_{M< n \le N}a_n \e\(xf(n)\)\right|^2dx-\varepsilon_0(N-M) \right| 
\ge  N^{-\gamma+3/2+\varepsilon}.
$$ 
The Cauchy-Schwarz inequality and~\eqref{eq:ub1} imply  
\begin{align*}
& \left(  \lambda(\cS) N^{-\gamma+3/2+\varepsilon}\right)^2\\
&\quad \  \le \lambda(\cS)\int_{\cI}\(\int_{x_0}^{x_0+\varepsilon_0}\left|\sum_{M< n \le N}a_n \e\(xf(n)\)\right|^2dx-\varepsilon_0(N-M)\)^2dx_0 \\
&\quad  \le N^{-2\gamma+3+o(1)}|\cI|  \lambda(\cS).
\end{align*}    
For sufficiently large $N$ this gives 
$$
 \lambda(\cS) \le \frac{N^{o(1)}|\cI|}{N^{2\varepsilon}}\le \frac{|\cI|}{2}.
$$
Hence for the set  $\cA=\{ x\in \cI:~x\not \in \cS\}$ we have 
\begin{equation}
\label{eq:Bub}
 \lambda(\cA) \ge \frac{|\cI|}{2}.
\end{equation}
With $\varepsilon_0$ as in~\eqref{eq:vareps0}, for each $\alpha \in \cA$ let $\cB_\alpha$ denote the interval 
$$\cB_\alpha=[\alpha,\alpha+\varepsilon_0]$$
so that 
$$
\cA\subseteq \bigcup_{\alpha \in \cA}\cB_\alpha.
$$

For an interval $\cJ=[x-r, x+r]$ denote $\cJ^{\times 5}=[x-5r, x+5r]$ its $5$-fold blow-up.  
Applying  the Vitali Covering Theorem~\cite[Theorem~1.24]{EG}   to the collection $\cB_\alpha$, $\alpha\in \cA$, there exists a subset $\cA_1\subseteq \cA$ such that 
\begin{equation}
\label{eq:5r}
\cA\subseteq \bigcup_{\alpha \in \cA}\cB_\alpha \subseteq \bigcup_{\alpha \in \cA_1}   \cB_\alpha^{\times 5} 
\end{equation}
and for all $\alpha, \beta \in \cA_1$ with $\alpha\neq \beta$ we have 
$\cB_{\alpha} \cap \cB_{\beta} \neq \emptyset$. Combining~\eqref{eq:Bub} with~\eqref{eq:5r} we conclude  
\begin{equation}
\label{eq:5r-measure}
|\cI|\ll \left|\bigcup_{\alpha \in \cA_1}   \cB_\alpha^{\times 5} \right|\ll \sum_{\alpha \in \cA_1}|\cB_\alpha|.
\end{equation}
It follows that $\cA_1$ is a finite set. 
Note that  there exists a subset $\cA_2\subseteq \cA_1$ such that  $\#\cA_2\gg \#\cA_1$  and for all $\alpha, \beta\in \cA_2$ with $\alpha \neq \beta$ we have 
$$
\text{Dist} (\cB_\alpha, \cB_\beta)\ge N^{-\gamma+1/2+\tau},
$$
 which establishes the desired $N^{-\gamma+1/2+\tau}$-separation.    
Combining this with~\eqref{eq:5r-measure} we derive
$$
|\cI|\ll \sum_{\alpha \in \cA_2}|\cB_\alpha| \ll N^{-\gamma+1/2+\tau}\# \cA_2,
$$
 which establishes the desired bound on 
 $$
 K = \# \cA_2 \gg N^{\gamma-1/2-\tau}|\cI|. 
 $$

It remains to show~\eqref{eq:xconds}. Let $\alpha\in \cA_2$ then 
$$
\left|\int_{\alpha}^{\alpha+\varepsilon_0}\left|\sum_{M< n \le N}a_n \e\(xf(n)\)\right|^2dx-\varepsilon_0(N-M) \right|\le   N^{-\gamma+3/2+\varepsilon}.
$$
Recalling the choice of $\varepsilon_0$  in~\eqref{eq:vareps0} and that $M=\fl{N/2}$,  
after choosing $\varepsilon<\tau$, for large enough $N$ we obtain 
$$
\varepsilon_0(N-M)\ge 2N^{-\gamma+3/2+\varepsilon}
$$
and hence we conclude 
\begin{align*}
\varepsilon_0\max_{x\in I_{\alpha}} & \left|\sum_{M< n \le N}a_n \e\(x f(n)\)\right|^2\\
&  \ge \int_{\alpha}^{\alpha+\varepsilon_0}\left|\sum_{M< n \le N}a_n \e\(xf(n)\)\right|^2dx\gg \varepsilon_0 N.
\end{align*}
Changing the numbering of intervals $\cB_\alpha$ from elements of $\cA_2$ to $\cB_i$, 
$i=1, \ldots, K$, $K =\#\cA_2$ we  complete the proof. 
\end{proof}

\section{Proofs of results on Hausdorff dimension} 

\subsection{Hausdorff dimension of a class of Cantor sets}
\label{sec:Cantor}

A typical way to obtain a lower bound for the Hausdorff dimension of some given set is to determine the Hausdorff dimension of a Cantor-like subset via the mass distribution principle, see~\cite[Chapter~4]{Falconer}.

Here we introduce  a class of Cantor sets which is motivated by iterating the construction of  Corollary~\ref{cor:max}. For convenience we introduce the following definition.

\begin{definition}[$\cI(N, M, \delta)$-patterns] \label{def:p}
Given an interval $\mathcal{I}$, integers $1\le M\le N$ with $N\ge 2$ and a constant $\delta>0$, an $\mathcal{I}(N,M,\delta)$-pattern is a set $\mathcal{S}=\{\mathcal{I}_k:\, 1\le k\le M\}$ of $M$ intervals such that
\begin{enumerate}
\item Each interval $\mathcal{I}_k\in\mathcal{S}$ has length $\delta$.
\item If $\mathcal{I}$ is split into $N$ distinct subintervals of equal length, then each $\mathcal{I}_k\in\mathcal{S}$ is contained in one of these subintervals, and no subinterval contains more than one element of $\mathcal{S}$.
\end{enumerate} 
\end{definition}

Figure~\ref{fig:patter} gives a visual example of an $\mathcal{I}(N,M,\delta)$-pattern.

\begin{figure}[H]
\includegraphics[scale=0.6]{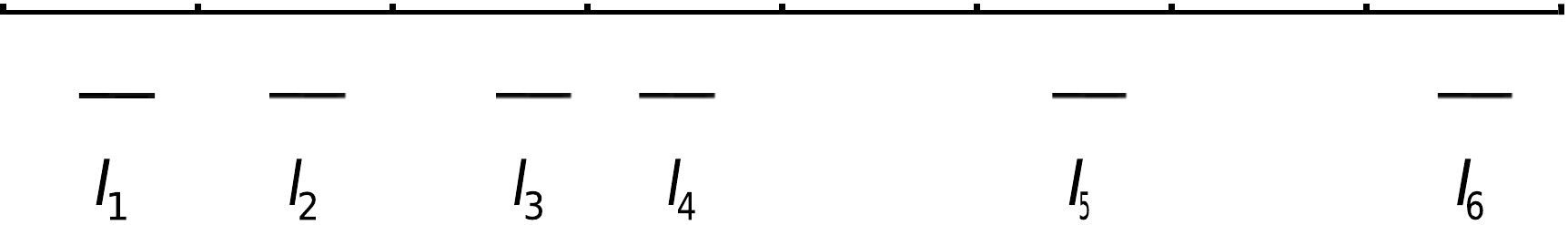}
\caption{A sample of  the $\cI(N, M, \delta)$-pattern with $N=8$, $M=6$ and some positive $\delta$. The collection of the intervals $\cI_i$, $1\le i\le 6$, forms the $\cI(8, 6, \delta)$-pattern.} 
 \label{fig:patter}
\end{figure} 

We remark that for our setting the exponential sums have large values at the intervals $\cI_i$, $1\le i\le 4$,  of Figure~\ref{fig:patter}.
Moreover for each interval $\cI_i$, $1\le i\le 4$, there are some subintervals which admits large exponential sums as well.  Thus by the iterated construction the exponential sums have large values on a Cantor-like set, and therefore this gives the lower bounds  of Theorem~\ref{thm:f} and Theorem~\ref{thm:d=2}.

\begin{remark}
\label{rem:J-patt}
 We also use the notation $\cJ(N, M, \delta)$ when the above process is applied to the interval $\cJ$. 
\end{remark}

We  construct  Cantor sets by iterating the above $\cI(N, M, \delta)$-patterns.  

Let $(M_k)$ and $(N_k)$ be two sequence natural numbers with $1\le M_k\le N_k$  and $N_k\ge 2$ for all $k\in \N$. Let $(\delta_k)$ be a sequence of positive numbers with $\delta_0=1$ and $\delta_{k}\le \delta_{k-1}/N_{k}$ for all $k\in \N$.

We start from an interval $\cI_0$ and take a $\cI_0(N_1, M_1, \delta_1)$-pattern inside of $\cI_0$. Let $\fC_1$ be the collection of these $M_1$-subintervals. More precisely,  let 
$$
\fC_1=\{\cI_i:~1\le i\le M_1\}.
$$
Note that each subinterval $\cI_i$, $1\le i\le M_1$, has length $\delta_1$. For each $\cI_i$ we take an $\cI_i(N_2, M_2, \delta_2)$-pattern inside of $\cI_i$, and we denote these subintervals of $\cI_i$ by $\cI_{i, j}$ with $1\le j\le M_2$. Let 
$$
\fC_2=\{\cI_{i, j}:~1\le i\le M_1,\ 1\le j\le M_2\}.
$$

Note that  the choices of $\cI_i(N_2, M_2, \delta_2)$-pattern and $\cI_j(N_2, M_2, \delta_2)$-pattern are independent for $i\neq j$.

Suppose that we have $\fC_k$ which is a collection of 
$$
\#\fC_k =  \prod_{i=1}^{k} M_k
$$ 
intervals of length $\delta_k$. For each of these intervals $\cI \in \fC_k$ we select a $\cI(N_{k+1}, M_{k+1}, \delta_{k+1})$-pattern inside of  $\cI$. Let $\fC_{k+1}$ be the collection of these intervals, that is 
$$
\fC_{k+1}=\{\cI_{i_1, \ldots, i_{k+1}}:~1\le i_1\le M_1, \ldots, 1\le i_{k+1}\le M_{k+1}\}.
$$ 
Our Cantor-like set is defined by 
$$
\cF=\bigcap_{k=1}^{\infty} \cF_k,
$$ 
where 
$$
\cF_k=\bigcup_{\cI\in \fC_k} \cI.
$$

There are uncountably many possible configurations for the above construction, we let $\Omega((N_k), (M_k), (\delta_k))$ denote
the collections of all the possible configurations.

For determining the Hausdorff dimension of such a set, we  use  the following  {\it mass distribution principle\/}, 
see~\cite[Theorem~4.2]{Falconer}.

\begin{lemma}
\label{lem:ms}
Let $\cX\subseteq \R$ and let $\mu$ be a Borel measure on $\R$ such that 
$$
\mu(\cX)>0.
$$
If there exist  $c$ and $\delta$ such  that for any interval $B(r)$ of length $r$ with $0<r<\delta$ we have 
$$
\mu(B(r))\le c r^{s},
$$
then $\dim \cX\ge s$.
\end{lemma}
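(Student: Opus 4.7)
The plan is to work directly from the definition of Hausdorff dimension given earlier in the paper. To show $\dim \cX \ge s$, it suffices to verify that for every $t<s$ one can find $\eps_0>0$ such that no countable cover $\{\cU_i\}_{i\ge 1}$ of $\cX$ satisfies $\sum_i(\diam\cU_i)^{t}<\eps_0$. Once this is established, the infimum in the definition of $\dim \cX$ cannot lie below $s$, and the claim follows.

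The mechanism for producing such an $\eps_0$ is simply to run the hypothesis against an arbitrary cover. First I would reduce to cover elements that are intervals: any $\cU_i\subseteq \R$ is contained in the closed interval $[\inf\cU_i,\sup\cU_i]$, whose length equals $\diam\cU_i$. Thus if $\diam\cU_i<\delta$ we may apply the mass bound to conclude
\begin{equation*}
\mu(\cU_i)\le \mu\bigl([\inf\cU_i,\sup\cU_i]\bigr)\le c\,(\diam\cU_i)^{s}.
\end{equation*}
Summing over $i$ and using countable subadditivity of $\mu$ together with $\cX\subseteq\bigcup_i\cU_i$ gives the key inequality
\begin{equation*}
0<\mu(\cX)\le \sum_{i}\mu(\cU_i)\le c\sum_{i}(\diam\cU_i)^{s}.
\end{equation*}

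To convert this into the desired lower bound on the $t$-sum, I would restrict to covers with $\sum_i(\diam\cU_i)^{t}<\eps$ for a small $\eps>0$ to be chosen. This forces $\diam\cU_i<\eps^{1/t}$ for every $i$, so for $\eps<\delta^{t}$ every $\cU_i$ is admissible for the hypothesis. Then $(\diam\cU_i)^{s}\le \eps^{(s-t)/t}(\diam\cU_i)^{t}$, and the displayed inequality above yields $\mu(\cX)\le c\,\eps^{s/t}$, which is impossible once $\eps$ is smaller than, say, $\min\bigl(\delta^{t},(\mu(\cX)/(2c))^{t/s}\bigr)$. Choosing $\eps_0$ to be this minimum completes the argument.

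The only mildly delicate point, and the one I would double-check, is the passage from general cover sets to intervals and the accompanying small-diameter hypothesis: the definition of Hausdorff dimension in the paper allows $\cU_i$ to be arbitrary subsets of $\R$ without an a priori diameter restriction, so one must argue (as above) that smallness of $\sum(\diam\cU_i)^{t}$ automatically forces the individual diameters to lie below $\delta$. Everything else is a routine application of monotonicity and countable subadditivity of $\mu$.
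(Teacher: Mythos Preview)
Your argument is correct and is essentially the standard proof of the mass distribution principle. The paper does not supply its own proof of this lemma; it simply quotes the result with a reference to~\cite[Theorem~4.2]{Falconer}, and the proof there proceeds along exactly the lines you have written (bounding $\mu(\cX)$ from above by $c\sum_i(\diam\cU_i)^{s}$ via countable subadditivity, then comparing with the $t$-sum). So there is nothing to contrast: you have filled in what the paper leaves to the citation.
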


We believe the following general result is of independent interest and may find some 
other applications.

\begin{lemma}
\label{lem:dimsion-Cantor}
Using the above notation, suppose that 
$$
M_k\ge  c N_k, \quad k\in \N, 
$$
for some absolute  constant $c>0$. Then for any set
$$ 
\cF\in \Omega((N_k), (M_k), (\delta_k))
$$ 
we have 
$$
\dim \cF= \liminf_{k\rightarrow \infty}\frac{\log \prod_{i=1}^{k} M_i}{\log (1/\delta_k)}.
$$
\end{lemma}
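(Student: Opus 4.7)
My plan is to write $P_k = \prod_{i=1}^k M_i$ and $s_k = \log P_k / \log(1/\delta_k)$, so the asserted value is $s^* = \liminf_{k\to\infty} s_k$. Using $N_k \ge 2$ together with $\delta_k \le \delta_{k-1}/N_k$, I would first note $\delta_k \le 2^{-k} \to 0$, and that the telescoping bound $\prod_{i=1}^k N_i \le \delta_0/\delta_k = 1/\delta_k$ combined with $M_k \le N_k$ yields $P_k \le 1/\delta_k$, hence $0 \le s_k \le 1$. I would then establish matching upper and lower bounds for $\dim \cF$.

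For the upper bound $\dim \cF \le s^*$, the family $\fC_k$ itself covers $\cF$ with $P_k$ intervals of length $\delta_k$. For any $s > s^*$, along a subsequence $k_j$ with $s_{k_j} \to s^*$ I would have
$$
\sum_{\cI \in \fC_{k_j}} |\cI|^s = P_{k_j} \delta_{k_j}^s = \delta_{k_j}^{s - s_{k_j}} \longrightarrow 0,
$$
since $\delta_{k_j} \to 0$ and eventually $s - s_{k_j} \ge (s - s^*)/2 > 0$. Hence $\cH^s(\cF) = 0$ and $\dim \cF \le s^*$. This step uses neither the hypothesis $M_k \ge c N_k$ nor the bound $s_k \le 1$.

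For the lower bound $\dim \cF \ge s^*$, I would define a Borel probability measure $\mu$ on $\cF$ by assigning mass $1/P_k$ to each interval of $\fC_k$; this is consistent because every parent interval in $\fC_{k-1}$ splits into $M_k$ equally weighted children in $\fC_k$. Fixing $s < s^*$, I would choose $K$ with $s_k > s$ for all $k \ge K$ and, for any interval $B$ of length $r < \delta_K$, select the unique $k \ge K+1$ with $\delta_k \le r < \delta_{k-1}$. The intervals of $\fC_{k-1}$ are disjoint of common length $\delta_{k-1} > r$, so consecutive left endpoints are at distance at least $\delta_{k-1}$, from which a window-counting argument shows $B$ meets at most two of them. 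Within each such parent, the children in $\fC_k$ lie in distinct equal subintervals of length $\delta_{k-1}/N_k$, so $B$ meets at most $O(r N_k / \delta_{k-1} + 1)$ of them. Invoking $N_k \le M_k/c$ gives
$$
\mu(B) \ll \frac{r M_k/(c \delta_{k-1}) + 1}{P_k} = \frac{r}{c\, \delta_{k-1} P_{k-1}} + \frac{1}{P_k}.
$$
Substituting $P_{k-1} = \delta_{k-1}^{-s_{k-1}}$ and using $\delta_{k-1} > r$ with $s_{k-1} - 1 \le 0$ yields $r/(\delta_{k-1} P_{k-1}) \le r \cdot r^{s_{k-1} - 1} = r^{s_{k-1}} \le r^s$, while $1/P_k = \delta_k^{s_k} \le r^{s_k} \le r^s$ follows from $\delta_k \le r$. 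Therefore $\mu(B) \ll r^s$ for all small $B$, and Lemma~\ref{lem:ms} gives $\dim \cF \ge s$; taking $s \nearrow s^*$ completes the proof.

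The main obstacle will be the interval-counting step inside a single parent of $\fC_{k-1}$: the definition of an $\cI(N_k, M_k, \delta_k)$-pattern only guarantees that the combinatorial density of $\fC_k$-intervals is $\lesssim N_k/\delta_{k-1}$, whereas the mass-theoretic density dictated by $\mu$ is $M_k/\delta_{k-1}$. The hypothesis $M_k \ge c N_k$ is precisely what allows me to interchange these two densities up to the constant $1/c$; without it the measure estimate would fall short of matching the dimension identity.
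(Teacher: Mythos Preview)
Your proof is correct and follows essentially the same route as the paper: the upper bound via the natural covers $\fC_k$, and the lower bound via the mass distribution principle with the uniform measure on $\fC_k$, counting how many level-$k$ intervals a ball of radius $r\in[\delta_k,\delta_{k-1})$ can hit. Your single estimate $O(rN_k/\delta_{k-1}+1)$ merges the paper's two cases ($r\ge\delta_{k-1}/N_k$ and $r<\delta_{k-1}/N_k$) into one line, and you make explicit the bound $s_k\le 1$ (from $\prod_i N_i\le 1/\delta_k$), which the paper needs in its step $r\,\delta_k^{t-1}\ll r^t$ but never states; otherwise the arguments coincide.
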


\begin{proof}
It is convenient to define
$$
P_k = \prod_{i=1}^{k} M_i.
$$
Let 
$$
s=\liminf_{k\rightarrow \infty}\frac{\log P_k}{\log (1/\delta_k)}.
$$
For any $\varepsilon>0$ there exists a subsequence $k_n$,  $n\in \N$ such that 
\begin{equation}
\label{eq:upper}
P_{k_n}   \le \delta_{k_n}^{-s-\varepsilon}
\end{equation}
for all large enough $n$.

Observe that for each $k_n$ the set $\cF$ is covered by   $P_{k_n} $ intervals and each of them has length $\delta_{k_n}$. Combining with~\eqref{eq:upper} we have 
$$
\delta_{k_n}^{s+2\varepsilon} P_{k_n}  \le \delta_{k_n}^{\varepsilon}.
$$
Thus the definition of Hausdorff dimension implies that $\dim \cF\le s+2\varepsilon$. By the arbitrary choice of $\varepsilon>0$ we obtain that $\dim \cF \le s$.

Now we  use the {\it mass distribution principle\/}  to obtain a lower bound for $\dim E$.  Thus we first construct a measure on $\cF$. For each $k$ let $\nu_k$ be a probability measure on $\TTT $ such that 
$$
\nu_k(\cI)=\frac{1}{\# \fC_k}=P_k^{-1},\qquad \forall \cI\in \fC_k,
$$
where $\fC_k$ is  the corresponding  collection of $\# \fC_k = P_k$ intervals as in the above. 
The measure $\nu_k$ weakly converges to a measure $\mu$, see~\cite[Chapter~1]{Mattila1995}.

Let $0<t<s$ then for all large enough $k$ we have 
\begin{equation}
\label{eq:k-large}
P_{k}  \ge \delta_k^{-t}.
\end{equation}

For any interval  $B(r)$ with $0<r<1$ there exists $k\in \N$ such that 
$$
\delta_{k+1}< r\le \delta_k. 
$$

Since the value $\delta_{k+1}$ maybe quite smaller than  the value $\delta_{k}$, we do a case by case argument according to the value of $r$.

{\bf Case 1:} Suppose that $\delta_k/N_{k+1}\le  r<\delta_k$. Since the interval $B(r)$ intersects at most  $3 r N_{k+1}/\delta_k$  disjoint intervals of equal length $\delta_k/N_{k+1}$, and inside each of these intervals there exists at most one  interval of $\fC_{k+1}$, we obtain that 
$$
\nu_{k+1} (B(r))\ll \frac{ r N_{k+1}}{\delta_k P_{k+1} }.
$$

Applying the condition $M_k\ge c N_k$, the estimate~\eqref{eq:k-large} and the assumption $r<\delta_k$,  we obtain 
$$
\nu_{k+1} (B(r))\ll \frac{r}{\delta_k P_k}  \ll \frac{r}{\delta_k} \delta_k^{t}= r \delta_k^{t-1} \ll r^{t}.
$$

{\bf Case 2:} Suppose that $\delta_{k+1}\le  r\le \delta_k/N_{k+1}$. Note that the interval $B(r)$ intersects at most two intervals with equal length $\delta_k/N_{k+1}$ and thus meets at most two intervals of $\fC_{k+1}$. Combining with~\eqref{eq:k-large} and the assumption $\delta_{k+1}\le  r$, we have 
$$
\nu_{k+1}(B(r))\le \frac{2}{P_{k+1}}\ll \delta_{k+1}^{t}\le r^{t}.
$$ 

Putting {\bf Case 1} and {\bf Case 2} together, we conclude that
\begin{equation}
\label{eq:nu-ball}
\nu_{k+1}(B(r))\ll r^{t}.
\end{equation}

Note that for $\delta_{k+1}\le  r<\delta_k$ we have 
$$
\mu(B(r))\le \nu_{k+1} (B(3r)).
$$ 
By~\eqref{eq:nu-ball} we obtain 
$\mu(B(r))\ll r^{t}$. 
Applying Lemma~\ref{lem:ms}, we arrive at $\dim \cF\ge t$. By the arbitrary choice of $t<s$ we obtain that $\dim \cF \ge s$, which finishes the proof.
\end{proof}

We remark that the condition $M_k\ge cN_k, k\in \N$ appears naturally in the proofs of Theorems~\ref{thm:f} and~\ref{thm:d=2}.  Moreover, the dimension formula of Lemma~\ref{lem:dimsion-Cantor} may not hold in general 
without the condition $M_k\ge cN_k$, $k\in \N$. However, there are upper bounds and lower bounds for the general situation and more general constructions of Cantor-like sets, see~\cite{FWW} for more details.

We now formulate the following result which fits into  our application immediately. 

\begin{cor}
\label{cor:diemension}
Using above notation, suppose that 
$$
M_k\ge  c N_k, \quad k\in \N
$$
for some constant $c>0$, and $M_k$ tends to infinity  rapidly such that 
$$
\lim_{k\rightarrow \infty}\frac{\log \prod_{i=1}^{k-1}M_i }{\log M_k}=0.
$$
Then for any $\cF\in \Omega((N_k), (M_k), (\delta_k))$ we have 
$$
\dim \cF= \liminf_{k\rightarrow \infty}\frac{\log M_k}{\log (1/\delta_k)}.
$$
\end{cor}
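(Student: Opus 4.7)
The statement is essentially a direct corollary of Lemma~\ref{lem:dimsion-Cantor}, so the plan is to reduce it to that lemma and then simplify the $\liminf$ expression using the rapid-growth hypothesis on $(M_k)$. Lemma~\ref{lem:dimsion-Cantor} already gives the conditional conclusion
$$
\dim \cF= \liminf_{k\rightarrow \infty}\frac{\log \prod_{i=1}^{k} M_i}{\log (1/\delta_k)},
$$
under the assumption $M_k \ge cN_k$, which is part of the hypothesis here. So the only thing to verify is the identity
$$
\liminf_{k\to\infty}\frac{\log \prod_{i=1}^{k} M_i}{\log (1/\delta_k)} \; =\; \liminf_{k\to\infty}\frac{\log M_k}{\log (1/\delta_k)}.
$$

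The first step is to split the numerator as $\log\prod_{i=1}^{k} M_i = \log M_k + \log\prod_{i=1}^{k-1}M_i$ and invoke the hypothesis
$$
\frac{\log\prod_{i=1}^{k-1}M_i}{\log M_k}\longrightarrow 0,
$$
which yields $\log\prod_{i=1}^{k-1}M_i = o(\log M_k)$ as $k\to\infty$. Next, I would use the growth constraint $\delta_k \le \delta_{k-1}/N_k$ from the setup preceding Lemma~\ref{lem:dimsion-Cantor}: iterating this inequality and recalling $\delta_0 = 1$ and $N_k \ge M_k$, one obtains
$$
\log(1/\delta_k)\;\ge\;\sum_{i=1}^{k}\log N_i\;\ge\;\sum_{i=1}^{k}\log M_i\;\ge\;\log M_k.
$$
In particular $\log(1/\delta_k) \ge \log M_k$, so dividing the previously obtained relation by $\log(1/\delta_k)$ gives
$$
\frac{\log\prod_{i=1}^{k} M_i}{\log (1/\delta_k)}\;=\;\frac{\log M_k}{\log (1/\delta_k)} \;+\; \frac{o(\log M_k)}{\log (1/\delta_k)}\;=\;\frac{\log M_k}{\log (1/\delta_k)} + o(1).
$$

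Taking $\liminf$ of both sides then yields the desired equality of liminfs, and combining with Lemma~\ref{lem:dimsion-Cantor} completes the proof. There is no serious obstacle here: the only point requiring any care is confirming that $\log(1/\delta_k)$ is indeed at least of order $\log M_k$ so that the $o(\log M_k)$ error, once divided by $\log(1/\delta_k)$, is truly $o(1)$; but this is immediate from the imposed constraint $\delta_k \le \delta_{k-1}/N_k$ together with $N_k \ge M_k$. Thus the corollary amounts to a clean reformulation of Lemma~\ref{lem:dimsion-Cantor} in the regime where $M_k$ dominates $\prod_{i<k}M_i$ on a logarithmic scale, a regime that, as noted in the paper, arises naturally in the applications to Theorems~\ref{thm:f} and~\ref{thm:d=2}.
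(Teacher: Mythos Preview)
Your proposal is correct and matches the paper's intent: the corollary is stated immediately after Lemma~\ref{lem:dimsion-Cantor} without proof, so the intended argument is exactly to invoke that lemma and simplify the $\liminf$ via the rapid-growth hypothesis. Your handling of the one nontrivial point, namely bounding $\log(1/\delta_k)\ge \log M_k$ from the constraint $\delta_k\le \delta_{k-1}/N_k$ and $M_k\le N_k$ so that the $o(\log M_k)$ term becomes $o(1)$ after division, is the right way to make the reduction rigorous.
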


\subsection{Proof of Theorem~\ref{thm:f}} 

Let $f$ satisfy the conditions of Theorem~\ref{thm:f} and let $\cF_{\va, c}(f)$ denote the set of $x\in \cI$ such that 
$$
 \left|\sum_{1\le n \le N} a_ne(x f(n))  \right|\ge c N^{1/2}  \quad \text{for infinitely many $N\in \N$}.
$$

We  construct  a Cantor set inside  $\cF_{\va, c}(f)$  then apply results of Section~\ref{sec:Cantor} to obtain the 
desired lower bound of $\dim \cF_{\va, c}(f)$.

For the construction of the Cantor set, we start from an arbitrary interval $\cI\subseteq \R$ and some large number $N$. Applying Lemma~\ref{lem:largeshort} to the interval $\cI$ and the number $N$, we  obtain a collection  (taking $M_1$ instead of $K$)  
of 
\begin{equation}
\label{eq:M1c}
 M_1 \gg N^{\gamma-1/2-\tau}|\cI|
 \end{equation}
 pairwise $N^{-\gamma+1/2+\tau}$-separated intervals   $\cI_i$, $1\le i\le M_1$,    satisfying
$$
|\cI_i|=N^{-\gamma+1/2+\tau}
$$
such that there exists some $x_i\in \cI_i$ with 
\begin{equation}
\label{eq:QN1}
\left |\sum_{\fl{N/2}\le n\le N} a_n\e\(x_i f(n)\) \right | \gg N^{1/2}.
\end{equation} 
Note that for any complex numbers $a$ and $b$, by the triangle inequality, we have 
$$
\max\{|a|,|b|\} \ge \max\{ |a-b| - |b|, |b|\} \ge  |a-b|/2.
$$
Hence, the inequality~\eqref{eq:QN1} implies
\begin{equation}
\begin{split}
\label{eq:QN}
\max_{Q\le N} &\left |\sum_{n \le Q} a_n\e\(x_i f(n)\) \right | \\
&\qquad \ge \max\left\{\left |\sum_{n \le N} a_n\e\(x_i f(n)\) \right |,\left |\sum_{n \le N/2} a_n\e\(x_i f(n)\) \right |\right\}\\
&\qquad \ge \frac{1}{2} \left |\sum_{N/2< n\le N} a_n\e\(x_i f(n)\) \right | 
 \gg N^{1/2}.
\end{split} 
\end{equation}

Furthermore, since the intervals $\cI_i$, $1\le i\le M_1$, are $N^{-\gamma+1/2+\tau}$-separated, that is 
$$
\text{Dist} (\cI_{i}, \cI_{j})\ge N^{-\gamma+1/2+\tau}, \qquad  1\le i <j \le M_1, 
$$
we obtain that 
\begin{equation}
\label{eq:point-s}
|x_i-x_j|\ge  N^{-\gamma+1/2+\tau}, \qquad  1\le i <j \le M_1.
\end{equation}

We now set 
\begin{equation}
\label{eq:N1def}
N_1= \rf{N^{\gamma-1/2-\tau}} + 1
\end{equation}
and divide the interval $\cI$ into $N_1$ subintervals of equal length $N_1^{-1}$. 
Note that the choice of  $N_1$ makes sure that the length of the subinterval is slightly smaller than 
 $N^{-\gamma+1/2+\tau}$.  
 
 For each $1\le i\le M_1$, among the above $N_1$ subintervals there is an interval $\cJ_i$  containing $x_i$. Indeed if $x_i$ meets two of them then we choose one only. By~\eqref{eq:point-s} we conclude that $\cJ_k$ and $\cJ_\ell$ are separated for all $1\le k<\ell\le M_1$. In fact what we need in the following construction is that $\cJ_k\neq \cJ_\ell$ for $1\le k<\ell\le M_1$.

For each $\cJ_i$, the estimate~\eqref{eq:QN} 
and Corollary~\ref{cor:max} imply that there exists a subinterval 
$\widetilde{\cJ}_i\subseteq \cJ_i$ with length $\delta_1=N^{-\gamma-\tau}$ such that 
$$
\max_{Q\le N}\left |\sum_{n=1}^{Q} a_n\e\(x f(n)\) \right | \gg N^{1/2}, \qquad \forall x\in \widetilde{\cJ}_i.
$$

Note that the collection of intervals $\widetilde{\cJ}_i$, $1\le i\le M_1$, forms a $\cI(N_1, M_1, \delta_1)$-pattern 
as in Definition~\ref{def:p}.

Let 
$$
\fC_1=\{\widetilde{\cJ}_i:~i=1, \ldots, M_1\}.
$$

Moreover, by~\eqref{eq:M1c} and~\eqref{eq:N1def} we have  $M_1 \gg N_1$ where the implied constant depends on $\cI$.

Let $\cF_1$ be the union of intervals of $\fC_1$. The set  $\cF_1$ is the first step  in the  construction of the desired Cantor-like set, see Figure~\ref{fig:f} for  the case $M_1=3$.  

Suppose we have constructed a sequence $\fC_1, \ldots, \fC_k$ where $\fC_k$ is a union of disjoint intervals $\cI_i$, $1\le i\le \# \fC_k$, of equal length $\delta_k$. We  next construct a set $\fC_{k+1}$ which is a union of disjoint intervals of equal length $\delta_{k+1}$ for suitable $\delta_{k+1}$.

Let  $L_k$ satisfy 
\begin{equation}
\label{eq:con1}
\delta_k\ge L_k^{-\gamma+2},
\end{equation}
which is chosen so our parameters in the construction of $\fC_{k+1}$ satisfy the conditions of Lemma~\ref{lem:largeshort}. 
For each interval $\cJ\in \fC_k$,  we use a similar argument to the above construction of $\fC_1$. To be  precise, let 
$$
N_{k+1}=\ceil{\delta_k L_k^{\gamma-1/2-\tau}}+1.
$$
We divide the interval $\cJ$ into $N_{k+1}$ subintervals of equal length $\delta_k N_{k+1}^{-1}$. Note that  the choice of $N_{k+1}$ make sure that the length of the subinterval is slightly smaller than $L_k^{-\gamma+1/2+\tau}$.  

For the interval $\cJ$ and  $L_k$,
applying Lemma~\ref{lem:largeshort}, we conclude that  among these  
$N_{k+1}$ intervals, there are $M_{k+1}$ intervals  $\cJ_{\cI, 1}, \ldots, \cJ_{\cI, M_{k+1}}$ of length $L_k^{-\gamma+1/2+\tau}$ such that for each $1\le \ell \le M_{k+1}$ there is a $x_\ell\in \cJ_{\cI, \ell}$ satisfying 
$$
\max_{Q\le L_k}\left |\sum_{n=1}^{Q} a_n\e\(x_\ell f(n)\) \right | \gg L_k^{1/2}.
$$

Furthermore, 
$$
N_{k+1}\ge M_{k+1}\gg L_k^{\gamma-1/2-\tau} \delta_k \gg N_{k+1}.
$$

For each $x_\ell$, $1\le \ell \le M_{k+1}$, by Corollary~\ref{cor:max} there exists a subinterval $\widetilde{\cJ_{\cI, \ell}}\subseteq \cJ_{\cI, \ell}$ such that 
$$
|\widetilde{\cJ_{\cI, \ell}}|=\delta_{k+1}=L_k^{-\gamma-\tau}
$$ 
and 
\begin{equation}
\label{eq:Q}
\max_{Q\le L_k}\left |\sum_{n=1}^{Q} a_n\e\(x f(n)\) \right | \gg L_k^{1/2}, \qquad \forall x\in \widetilde{\cJ_{\cI, \ell}}.
\end{equation} 
Thus the collection of intervals $\widetilde{\cJ_{\cI, \ell}}$ forms a $\cJ(N_{k+1}, M_{k+1}, \delta_{k+1})$ pattern.  Note that for $\cJ_1, \cJ_2\in \fC_k$ with $\cJ_1 \neq \cJ_2$ the two patterns $\cJ_1(N_{k+1}, M_{k+1}, \delta_{k+1})$ and $\cJ_2(N_{k+1}, M_{k+1}, \delta_{k+1})$ may be different in general.

Let $\fC_{k+1}$ be the collection of these $\cJ(N_{k+1}, M_{k+1}, \delta_{k+1})$ patterns with $\cJ\in \fC_k$.  Our desired Cantor set is defined as 
$$
\cF=\bigcap_{k=1}^{\infty} \cF_k,
$$
where
$$
\cF_k=\bigcup_{\cI\in \fC_k} \cI.
$$

\begin{figure}[H]
\centering 
\includegraphics[scale=0.6]{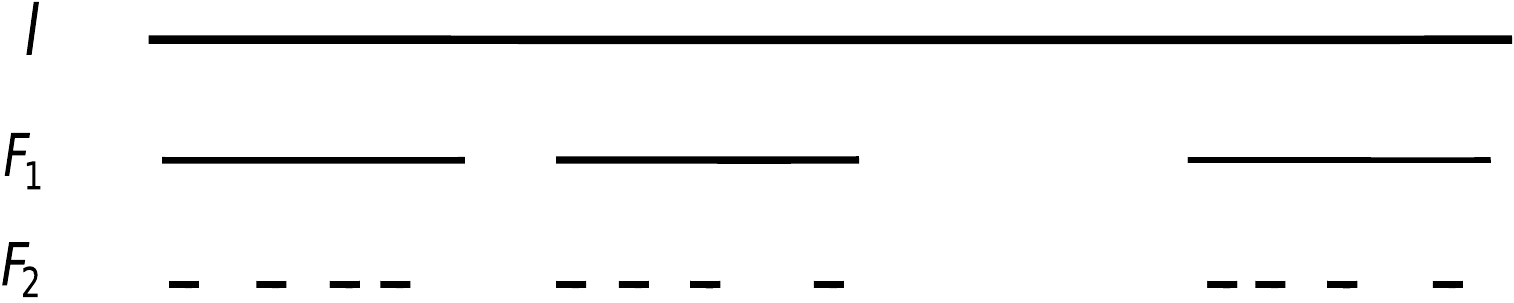}
\caption{Two steps construction of the Cantor-like set with $M_1=3$ and $M_2=4$.}
 \label{fig:f}
\end{figure}

Note that the set $\cF$ is an element of $\Omega((N_k), (M_k), (\delta_k))$ as defined in Section~\ref{sec:Cantor}.
Now we are going to show that  
\begin{equation}
\label{eq:subset}
\cF\subseteq \cF_{\va, c}(f)
\end{equation}
for some choices of parameters   $N_k$, $M_k$ and $\delta_k$, where $k\in \N$.   

Let $x\in \cF$ then $x\in  \cF_{k}$ for all $k\in \N$.   
The estimate~\eqref{eq:Q} implies that  there exists $Q_k$ such that 
$$
L_k^{1/2}\ll Q_k\le L_k,
$$
and 
$$
\left |\sum_{n=1}^{Q_k} a_n\e\(x f(n)\) \right| \gg | Q_k|^{1/2}.
$$
For each $k$ we choose $L_k$ large enough such that 
\begin{equation}
\label{eq:con2}
Q_1<Q_2<\ldots,
\end{equation}
which implies 
$$
\sum_{n=1}^{Q}a_n \e\(xf(n)\) \gg Q^{1/2} 
$$
for infinitely many  $Q\in \N$ 
and hence we have~\eqref{eq:subset}.  For each $k$ we can choose $L_k$ even larger such that the conditions~\eqref{eq:con1},~\eqref{eq:con2} hold, and 
$$
\lim_{n\rightarrow \infty}\frac{\log \prod_{i=1}^n N_i}{ \log N_{n+1}}=0,
$$
Clearly the condition $N_k \asymp M_k$, $k\in \N$ implies  
$$
\lim_{n\rightarrow \infty}\frac{\log \prod_{i=1}^n M_i}{ \log M_{n+1}}=0.
$$
Hence Corollary~\ref{cor:diemension} applies and yields   
$$
\dim \cF= \liminf_{k\rightarrow \infty}\frac{\log N_k}{\log (1/\delta_k)} =\frac{\gamma-1/2-\tau}{\gamma+\tau},
$$
and the result follows from~\eqref{eq:subset} since $\tau>0$ is arbitrary.

\subsection{Proof of Theorem~\ref{thm:d=2}}  
The proof is similar  to the  proof of Theorem~\ref{thm:f}, so we only give a sketch.  
Let $f$ satisfy the conditions of Theorem~\ref{thm:d=2} and let $\cF_{\va, c}(f)$ denote the set of $x\in \cI$ such that 
$$
\left|\sum_{n=1}^{N}a_n\e(xf(n)) \right|\ge cN^{1/2} \quad \text{for infinitely many $N\in \N$}.
$$

Similarly to the proof of Theorem~\ref{thm:f}, first of all we  construct a   Cantor set inside $\cF_{\va, c}(f)$.

Fix a small parameter $\tau>0$. Let $L\in \N$ be a  large number  and let 
$$
N_1=\fl{\lambda(\cI) L^{\gamma-1-\tau}}.
$$
Divide $\cI$ into
$N_1$ subintervals of length 
$
|\cI|/N_1\gg L^{1-\gamma-\tau}
$
 which we denote as  $\cI_1, \ldots, \cI_{N_1}$.   

Applying Lemma~\ref{lem:aver f}  to each interval $\cI_k, 1\le k\le  N_1$, there exists $x_k\in \cI_k$ such that  
$$
\left |\sum_{n=L}^{2L}a_n\e\(x_k f(n)\) \right | \gg L^{1/2}.
$$   
Applying similar arguments to the proof of~\eqref{eq:QN}, we obtain 
$$
\max_{Q\le 2L} \left| \sum_{n=1}^{Q} a_n\e(x_k f(n)) \right| \gg L^{1/2}. 
$$
For each $x_k$, applying   Corollary~\ref{cor:max} and using that $f(t) \le t^{\gamma+o(1)}$, we obtain that there exists an 
interval $\cJ_k\subseteq \cI_k$ with length $|\cJ_k|=L^{-\gamma-\tau}$ such that 
$$
\max_{Q\le 2L} \left| \sum_{n=1}^{Q} a_n\e(x f(n)) \right| \gg L^{1/2}, \qquad \forall x\in \cJ_k.
$$

Note that  the collection of intervals $\cJ_k\subseteq \cI_k$, $1\le k\le N_1$, forms an $\cI(N_1, N_1, L^{-\gamma-\tau})$-pattern as in Definition~\ref{def:p}. Furthermore, this is the first step of the construction of the desired  Cantor-like set, and we denote the union of these intervals $\cJ_k, 1\le k\le N_1$, as $\cC_1$.

Let $L_k, k\in \N$ be a rapidly increasing sequence of numbers, for instance  
\begin{equation}
\label{eq:N+k+1}\log L_{k+1} \ge L_1L_2\ldots L_k.
\end{equation}

Suppose that we have constructed $k$-level Cantor set $\cC_k$ which is a collection of disjoint intervals with equal length $\delta_k$. Let 
$
N_{k+1}=\fl{\delta_k L_{k+1}^{\gamma-1-\tau}}
$
and for each  $\cJ\in \cC_k$ we  divide the interval  $\cJ\in \cC_k$  into $N_{k+1}$ subintervals of length 
$$
|\cJ|/N_{k+1}\gg L_{k+1}^{1-\gamma+\tau}.
$$
Applying the same argument as above to the interval $\cJ$,  there exists a $\cJ(N_{k+1}, N_{k+1}, \delta_{k+1})$-pattern $\cA\subseteq \cJ$ such that   
\begin{equation}
\label{eq:delta}
 \delta_{k+1}=L_{k+1}^{-\gamma-\tau},
\end{equation}
and
$$
\max_{Q\le 2L_{k+1}}\left |\sum_{n=1}^{Q}a_n\e\(x f(n)\) \right | \gg Q^{1/2}, \qquad \forall \ x\in \cA.
$$

Let $\cC_{k+1} $ be a collection of the $\cJ(N_{k+1}, N_{k+1}, \delta_{k+1})$-patterns inside each interval  $\cJ\in \cC_k$, see Remark~\ref{rem:J-patt}. 
The desired Cantor  set is defined as  
$$
\cC=\bigcap_{k=1}^{\infty} \cC_k.
$$  

Note that for some small constant $c>0$ the Cantor-like set $\cC$ is subset of $\cF_{\va, c}(f)$.  

By~\eqref{eq:N+k+1} and~\eqref{eq:delta} we conclude that for each $k\in \N$ the set $\cC_{k+1}$ contains 
$$\prod_{i=1}^{k+1}N_i=L_{k+1}^{\gamma-1-\tau+o(1)}
$$
 intervals with equal length  
$$
\delta_{k+1}=L_{k+1}^{-\gamma-\tau}.
$$
Combining with Corollary~\ref{cor:diemension} and the arbitrary choice of $\tau>0$ we conclude that 
$$
\dim \cC  \ge 1-1/\gamma, 
$$
which finishes the proof.

\section{Some heuristics on the Hausdorff dimension of the sets of  large sums} 
\label{sec:heuristic}
We start with the case of monomial sums. In particular, recall the notation~\eqref{eq:sigmadef} and~\eqref{eq:sigmaFdef}.  
It is natural to assume that $\sigma_d(x; N) $ is large only if $x$ can be well  approximated by a rational
number   with a  reasonably small denominator, that is,  belongs to {\it major arcs\/} in the traditional 
terminology, see~\cite{Vau}. While qualitatively this is an established fact, its optimal quantitive version is 
still unclear. Here we base our heuristics  on an approximate formula 
of Vaughan~\cite[Theorem~4.1]{Vau}.  More precisely, if 
$$
x =\frac{a}{q} + \xi
$$ 
for  some   integers $a$ and $q \ge 1$ with $\gcd(a,q) =1$ 
then 
\begin{equation}
\label{eq:VauAppr}
\begin{split}
\sigma_d(x; N) =\frac{1}{q} \sigma_d(a/q; q) & \int_0^{N} \e\(\xi \gamma^d\) d \gamma\\
& \qquad + O\(q^{1/2 + o(1)} \(1+ |\xi| N^d\)^{1/2}\).
\end{split}
\end{equation} 
It is also shown in~\cite{BD} that the error term is close to optimal. 
First we observe that   if $\xi< 0.5 N^{-d}$ then 
$$
\left| \int_0^{N} \e\(\xi \gamma^d\) d \gamma \right| \gg N. 
$$
Now assuming that ``typically'' we have $ \sigma_d(a/q; q)=q^{1/2 + o(1)}$, we conclude that 
$$
\left|\sigma_d(x; N) \right|  \ge N q^{-1/2 + o(1)} + O\(q^{1/2 + o(1)}\). 
$$
For any $\alpha > 1/2,$ setting $N = \fl{q^{1/2(1-\alpha) + \varepsilon}} $ we
obtain that for any $x \in \TTT $ such that  
\begin{equation}
\label{eq:Well Approx x}
\left| x -\frac{a}{q} \right| < q^{-d/2(1-\alpha) - d \varepsilon}
\end{equation}
holds for infinitely many  $a$ and $q \ge 1$ with $\gcd(a,q) =1$, 
we have 
$$
\left|\sigma_d(x; N) \right|  \ge N^{\alpha} 
$$  
for infinitely many $N$.  The argument in the proof of the classical Jarn\'ik--Besicovitch theorem,  see~\cite[Theorem~10.3]{Falconer}, implies that 
the set of $x \in \TTT $ satisfying $|x-a/q| \le q^{-\kappa}$ for infinitely many irreducible fractions $a/q$
with some fixed $\kappa \ge 2$, 
is of Hausdorff dimension $2/\kappa$. Hence, recalling~\eqref{eq:Well Approx x}, it seems reasonably to conjecture that 
$$
\dim \sF_{d, \alpha} =   \frac{4(1-\alpha)}{d}.
$$  
In particular, compared with~\eqref{eq:Hd=1}, for $d\ge 3$  this suggests that there is a discontinuity in the behaviour 
of  $\dim \sF_{d, \alpha}$ as a function of $\alpha$, most likely at $\alpha=1/2$.  

In principle similar arguments also apply to  $\sE_{d, \alpha}$ 
and may also lead to a conjecture about $\dim \sE_{d, \alpha}$. 
Instead of~\eqref{eq:VauAppr}, we now recall a 
result of Baker~\cite[Lemma~4.4]{Bak} which asserts that if for $\vx\in \Tor$ 
we have 
\begin{equation}
\label{eq:Approx}
x_i -\frac{a_i}{q}  = \xi_i
\end{equation} 
with some integers $a_1, \ldots, a_d$ and  $q\ge 1$ and real numbers
\begin{equation}
\label{eq:Small xi}
|\xi_i| \le \frac{1}{2d^2 q N^{i-1}}, \qquad  i =1, \ldots d, 
\end{equation}
then 
\begin{equation}
\label{eq:BakAppr}
\begin{split}
  S_d(\vx; N) 
&   =\frac{1}{q} S_d(\va/q; q)
\int_0^{N} \e\(\xi_d \gamma^d+\ldots +\xi_1\gamma\) d \gamma\\
& \qquad \qquad \qquad \qquad \qquad \quad  + O\(q^{1-1/d + o(1)} D^{1/d}\), 
\end{split}
\end{equation}
where 
$$
\va = (a_1, \ldots, a_d) \mand D =\gcd(a_2, \ldots, a_d,q).
$$
We now assume that  for all but a negligible set of  $\vx\in \Tor$ 
(say, of Hausdorff dimension zero) the following holds:
\begin{itemize}
\item  the corresponding exponential sums have square root cancellation, which holds, if for example
 the denominators $q$ are essentially square-free up to a factor of size $q^{o(1)}$; 
\item we have $D = q^{o(1)}$.
\end{itemize}
These are the main heuristic assumptions of our approach. 
Under these assumptions, analysing the proof of~\eqref{eq:BakAppr} in~\cite{Bak},  we see that~\eqref{eq:BakAppr}  can heuristically be transformed into
\begin{equation}
\label{eq:BakApprHeurist}
\begin{split}
 S_d(\vx; N)&   =\frac{1}{q} S_d(\va/q; q)  \int_0^{N}  \e\(\xi_d \gamma^d+\ldots +\xi_1\gamma\) d \gamma\\
& \qquad \qquad \qquad \qquad \qquad \qquad \qquad +O\(q^{1/2+ o(1)} \). 
\end{split}
\end{equation}

Furthermore,  if   
$$
|\xi_i| \le \frac{1}{2d^2 N^{i}}, \qquad  i =1, \ldots d, 
$$
then 
$$
\left| \int_0^{N} \e\(\xi \gamma^d\) d \gamma \right| \gg N, 
$$
and thus 
$$
\left| S_d(\vx; N) \right|  \ge N q^{-1/2 + o(1)} + O\(q^{1/2 + o(1)}\), 
$$
provided that 
 $$
\left| x_i  -\frac{a}{q} \right| <  \frac{1}{2d^2  N^i } \le  \frac{1}{2d^2 q N^{i-1}},  \qquad  i =1, \ldots d, 
$$
(ignoring a very small set of $\vx \in \Tor$).

For any $1/2<\alpha <1$, setting  $N = \fl{q^{1/2(1-\alpha) + \varepsilon}} $, we obtain that for any $\vx\in \T_d$ such that 
there are infinitely many approximations   
$$
\left| x_i  -\frac{a_i}{q} \right| <  q^{-i/2(1-\alpha)-i \varepsilon}, \qquad  i =1, \ldots d, 
$$
we have  
$|S_d(\vx; N)|\ge N^{\alpha}$
for infinitely many $N$.

Let $\cX_{d,\alpha}$ be  the set of   $\vx\in \Tor$ such that  there are infinitely many 
approximations 
$$
\left| x_i  -\frac{a_i}{q} \right| <  q^{-i/2(1-\alpha)}, \qquad  i =1, \ldots d. 
$$
This naturally leads us to the conjecture that  
$$
\dim \sE_{d, \alpha} = \dim \cX_{d, \alpha}.
$$

We also consider the set   $\cX_{d,\alpha}^\sharp \subseteq \cX_{d,\alpha}$, which is defined exactly as $\cX_{d,\alpha}$ 
with the additional condition that the denominator  $q= p$ is prime. That is, $\cX_{d,\alpha}^\sharp$ is
the set of   $\vx\in \Tor$ such that  there are infinitely many 
approximations 
$$
\left| x_i  -\frac{a_i}{p} \right| <  p^{-i/2(1-\alpha)}, \qquad  i =1, \ldots d, 
$$
with a prime $p$. 

We also note that for $\vx \in  \Tor$ such that  with a prime $q=p$ we have~\eqref{eq:Approx}
and~\eqref{eq:Small xi},  using
 the {\it Weil bound\/}, see, for example,~\cite[Chapter~6, Theorem~3]{Li}, in the argument of
 the proof of~\cite[Lemma~4.4]{Bak}, the 
asymptotic formula~\eqref{eq:BakApprHeurist} can be established 
rigorously with $p$ instead  of $q$.

We also recall that by a result of Knizhnerman and  Sokolinskii~\cite[Theorem~1]{KnSok1}, see also~\cite{KnSok2}, there is positive 
proportion of rational exponential sums $S_d(\va/p; p)$,  which are of order $p^{1/2}$,
that is, with 
\begin{equation}
\label{eq:large-lower}
\left| S_d(\va/p; p) \right| \gg p^{1/2}.
\end{equation}
Furthermore, by~\cite[Lemma~2.6]{ChSh-AM},  the  corresponding coefficients $\va/p = (a_1/p, \ldots, a_d/p)$ are densely distributed  in the cube $[0,1]^d$.  
This shows that the main term in~\eqref{eq:BakApprHeurist}
is 
large for a large subset of $\vx \in \Tor$. We remark that  for $d=2$, that is, for Gauss sums,  
the bound~\eqref{eq:large-lower} holds for all $(a_1/p, a_2/p)$ with $a_2\neq 0$,  see~\cite[Equation~(1.55)]{IwKow}.

Hence, using the above observations,  one can perhaps produce  a rigorous argument that 
$$
\dim \sE_{d, \alpha} \ge  \dim  \cX_{d,\alpha}^\sharp.
$$ 
Applying a result of Rynne~\cite[Theorem~1]{Rynne} we obtain 
$$
 \dim  \cX_{d,\alpha}^\sharp = \dim \cX_{d,\alpha}=\mathfrak{s}(d, \alpha), 
$$ 
 where  
$$
\mathfrak{s}(d, \alpha)=\min_{j=1, \ldots, d}\frac{d+1+j\vartheta_j -\sum_{i=1}^{j} \vartheta_i}{1+\vartheta_j} ,
$$
and
$$
\vartheta_i=\frac{i}{2(1-\alpha)}-1, \qquad i=1, \ldots, d.
$$ 

We remark that the condition $\alpha\ge 1/2$ makes sure  the assumption of~\cite[Theorem~1]{Rynne} holds, that is, we have 
$$
\sum_{i=1}^d \vartheta_i \ge 1.
$$  
For a different approach to $\dim \cX_{d, \alpha}^\#$ and $\dim \cX_{d, \alpha}$, see also~\cite[Corollary~5.1]{Wang}.

We recall that the upper bound of $\dim \sE_{d, \alpha}$ in~\cite[Theorem~1]{ChSh-JNT}  claims that for $d\ge 2$ and $\alpha\in (1/2, 1)$ one has 
$$
\dim \sE_{d, \alpha} \le  \mathfrak{u}(d, \alpha),
$$
where  
$$
\mathfrak{u}(d, \alpha)=\min_{k=0, \ldots, d-1} \frac{(2d^{2}+4d)(1-\alpha)+k(k+1)}{4-2\alpha+2k}.
$$

We now compare the  values of $\mathfrak{s}(d, \alpha)$ and $\mathfrak{u}(d, \alpha)$ for   $d=2$.   We have 
$$
\mathfrak{s}(2, \alpha)= 
  \begin{cases}
 \displaystyle \frac{7-6\alpha}{2}   & \text{for } 1/2\le  \alpha\le 5/6,\\
   6(1-\alpha) &   \text{for } 5/6 <\alpha<1,
  \end{cases}
$$
and 
$$
\mathfrak{u}(2, \alpha)= 
  \begin{cases}
  \displaystyle \frac{9-8\alpha}{3-\alpha}   &  \text{for } 1/2\le  \alpha\le 6/7,\\ 
 \displaystyle   \frac{8(1-\alpha)}{2-\alpha} &  \text{for }  6/7 <\alpha<1.
  \end{cases}
$$
Note that for the endpoints $\alpha=1/2$ and $\alpha= 1$ we have 
$$
\mathfrak{s}(2, 1/2)=\mathfrak{u}(2, 1/2)=2,
$$
and 
$$
\mathfrak{s}(2, 1)=\mathfrak{u}(2, 1)=0.
$$
Moreover, it is somewhat tedious but elementary to derive that  
$$
\mathfrak{s}(2, \alpha)<\mathfrak{u}(2, \alpha), \qquad 1/2<\alpha<1.
$$ 

For the case  $d\ge 3$ and for the value $\mathfrak{u}(d, \alpha )$  we  have 
$$
\mathfrak{u}(d, 1/2)=d \mand \mathfrak{u}(d, 1)=0.
$$
However, for the value  $\mathfrak{s}(d, 1/2)$ we have  
$$
\mathfrak{s}(d, 1/2)=\min_{j=1, \ldots, d} \frac{2(d+1)+j^{2}-j}{2j}.
$$ 
Thus  we have 
$$
\lim_{d \to \infty} \mathfrak{s}(d, 1/2)/ \sqrt{2d} = 1.
$$
In particular,  compared with~\eqref{eq:Hd=1}   this suggests that, as in the case of  $\dim \sF_{d, \alpha}$, there is a discontinuity in the behaviour of $\dim \sE_{d, \alpha}$, most likely at $\alpha =1/2$ when $d\ge 3$.

\section*{Acknowledgement}

During the preparation of this work, 
C.C. was  supported   by the Hong Kong Research Grants Council GRF Grants  CUHK14301218 and CUHK14304119.
C.C. and I.S.  were   supported  
by the Australian Research Council Grant~DP170100786. B.K. was supported  by the Australian Research Council Grant DP160100932 and Academy of 
Finland Grant~319180.  J.M. was supported by a Royal Society Wolfson Merit Award, and funding from the European Research Council (ERC) under the European Union's     Horizon 2020 research and innovation programme (grant agreement No 851318).

\end{document}